\DeclareMathAlphabet{\mathpzc}{OT1}{pzc}{m}{it}
\newtheorem{theorem}{Theorem}[subsection]
\newtheorem{theorem-definition}[theorem]{Theorem-Definition}
\newtheorem{lemma-definition}[theorem]{Lemma-Definition}
\newtheorem{definition-prop}[theorem]{Proposition-Definition}
\newtheorem{prop}[theorem]{Proposition}
\newtheorem{cor}[theorem]{Corollary}
\newtheorem{definition}[theorem]{Definition}
\newtheorem{conjecture}[theorem]{Conjecture}
\theoremstyle{definition}
\newtheorem{remark}[theorem]{Remark}
\newtheorem{example}[theorem]{Example}
\newtheorem{question}[theorem]{Question}
\newcommand{\LL}{\ensuremath{\mathbb{L}}}
\newcommand{\N}{\ensuremath{\mathbb{N}}}
\newcommand{\Z}{\ensuremath{\mathbb{Z}}}
\newcommand{\Q}{\ensuremath{\mathbb{Q}}}
\newcommand{\R}{\ensuremath{\mathbb{R}}}
\newcommand{\C}{\ensuremath{\mathbb{C}}}
\newcommand{\A}{\ensuremath{\mathbb{A}}}
\newcommand{\X}{\ensuremath{\mathscr{X}}}
\newcommand{\cX}{\ensuremath{\mathcal{X}}}
\renewcommand{\R}{\ensuremath{\mathbb{R}}}
\renewcommand{\C}{\ensuremath{\mathbb{C}}}
\renewcommand{\A}{\ensuremath{\mathbb{A}}}
\renewcommand{\X}{\ensuremath{\mathfrak{X}}}
\newcommand{\Spec}{\ensuremath{\mathrm{Spec}\,}}
\newcommand{\Spf}{\ensuremath{\mathrm{Spf}\,}}
\newcommand{\Sp}{\ensuremath{\mathrm{Sp}\,}}
\newcommand{\Proj}{\ensuremath{\mathbb{P}}}
\newcommand{\an}{\mathrm{an}}
\newcommand{\Hom}{\mathrm{Hom}}
\newcommand{\sh}{sh}
\newcommand{\redu}{\mathrm{red}}
\newcommand{\rig}{\mathrm{rig}}
\newcommand{\Var}{\mathrm{Var}}
\newcommand{\Sm}{\mathrm{Sm}}
\newcommand{\KVar}[1]{\ensuremath{K_0(\Var_{#1})}}
\newcommand{\Kmod}[1]{\ensuremath{K_0^{\mathrm{mod}}(\Var_{#1})}}
\newcommand{\Lie}{\mathrm{Lie}}
\newcommand{\length}{\mathrm{length}}
\newcommand{\coker}{\mathrm{coker}}
\newcommand{\ord}{\mathrm{ord}}
\numberwithin{equation}{section} \hyphenpenalty=6000
\begin{document}
\title[Motivic zeta functions]{Motivic zeta functions for degenerations of abelian varieties and Calabi-Yau varieties}
\author{Lars Halvard Halle}
\address{Matematisk Institutt\\
Universitetet i Oslo\\Postboks 1053
\\
Blindern\\0316 Oslo\\ Norway} \email{larshhal@math.uio.no}
\author[Johannes Nicaise]{Johannes Nicaise}
\address{KULeuven\\
Department of Mathematics\\ Celestijnenlaan 200B\\3001 Heverlee \\
Belgium} \email{johannes.nicaise@wis.kuleuven.be}

\thanks{
 The second author was partially supported by the Fund for Scientific Research - Flanders (G.0415.10). 
} \maketitle

\section{Introduction}
Let $f \in \mathbb{Z}[x_1, \ldots, x_n]$ be a non-constant
polynomial, and let $p$ be a prime. Igusa's $p$-adic zeta function
$Z^p_f(s)$ is a meromorphic function on the complex plane that
encodes the number of solutions of the congruence $f\equiv 0$
modulo powers $p^m$ of the prime $p$. Igusa's {\em $p$-adic
monodromy conjecture} predicts in a precise way how the
singularities of the complex hypersurface defined by the equation
$f=0$ influence the poles of $Z^p_f(s)$ and thus the asymptotic
behaviour of this number of solutions as $m$ tends to infinity.
The conjecture states that, when $p$ is sufficiently large, poles
of $Z^p_f(s)$ should correspond to local monodromy eigenvalues of
the polynomial map $\C^n\to \C$ defined by $f$. We refer to
Section \ref{sec-zetafunctions} for a precise formulation.

Starting in the mid-nineties,  J. Denef and F. Loeser developed
the theory of motivic integration, which had been introduced by M.
Kontsevich in his famous lecture at Orsay in 1995. Denef and
Loeser used this theory to construct a motivic object
$Z^{mot}_f(s)$ that interpolates the $p$-adic zeta functions
$Z^p_f(s)$ for $p\gg 0$ and captures their geometric essence. This
object is called the {\em motivic zeta function} of $f$. Denef and
Loeser also formulated a motivic upgrade of the monodromy
conjecture (Conjecture \ref{conj-motmon}). Its precise relation
with the $p$-adic monodromy conjecture is explained in Section
\ref{subsec-comparmon}.

The aim of this paper is to present  a \emph{global} version of
Denef and Loeser's motivic zeta functions. Let $X$ be a Calabi-Yau
variety over a complete discretely valued field $K$ (i.e., a
smooth, proper and geometrically connected variety with trivial
canonical sheaf). We'll define the motivic zeta function $Z_X(T)$
of $X$. This is a formal power series with coeffients in a certain
localized Grothendieck ring of varieties over the residue field
$k$ of $K$. We'll show that $Z_X(T)$ has properties analogous to
Denef and Loeser's zeta function, and we'll prove a global version
of the motivic monodromy conjecture
 when $X$ is an abelian variety, under a certain tameness
condition on $X$ (Theorem \ref{thm-main}).

The link between Denef and Loeser's motivic zeta function
$Z^{mot}_f(s)$ and our global variant is an alternative
interpretation of $Z^{mot}_f(s)$ in terms of non-archimedean
geometry, due to J. Sebag and the second author \cite{NiSe-Inv}.
 This interpretation is based on the theory of motivic integration on rigid varieties developed by F. Loeser
and J. Sebag \cite{motrigid}, which explains how one can associate
a {\em motivic volume} to a gauge form on a smooth rigid variety
over a complete discretely valued field. J. Sebag and the second
author constructed the {\em analytic Milnor fiber} of a
hypersurface singularity, a non-archimedean model for the
classical Milnor fibration in the complex analytic setting. The
analytic Milnor fiber is a smooth rigid variety over a field $K$
of Laurent series. The motivic zeta function can be realized as a
generating series whose coefficients are motivic volumes of a
so-called {\em Gelfand-Leray} form on the analytic Milnor fiber
over finite totally ramified extension of the base field $K$. This
is explained in detail in Section \ref{sec-motrig}.

This interpretation of the motivic zeta function admits a natural
generalization to the global case, where we replace the analytic
Milnor fiber by a Calabi-Yau variety $X$ over a complete
discretely valued field $K$ and the Gelfand-Leray form by a
suitably normalized gauge form $\omega$ on $X$. The zeta function
$Z_X(T)$ is studied in Section \ref{sec-abelian} when $X$ is an
abelian variety and in Section \ref{sec-degcy} in the general
case. We raise the question whether there exists a relation
between the poles of $Z_X(T)$ and the monodromy eigenvalues of $X$
as predicted by the monodromy conjecture in the case of
hypersurface singularities (Question \ref{ques-GMP}).

We studied motivic zeta functions of abelian varieties  in detail
in the papers \cite{HaNi-comp,HaNi-zeta,HaNi-jumpmon}. Section
\ref{sec-abelian} gives an overview of the results and methods
used in those papers.  A powerful and central tool is the N\'eron
model of an abelian $K$-variety $A$, which is the ``minimal''
extension of $A$ to a smooth group scheme over $R$. The N\'eron
model $\mathcal{A}$ of $A$ comes equipped with much interesting
structure, such as the Chevalley decomposition of the identity
component of its special fiber, the Lie algebra $\Lie(\mathcal{A})
$ and the component group $\Phi_A$. The key point in the study of
$Z_A(T)$ is to understand how these objects change under ramified
extensions of $K$.

Our main result is Theorem  \ref{thm-main}, which states that if
$A$ is a tamely ramified abelian $K$-variety, then $
Z_A(\mathbb{L}^{-s}) $ is rational with a unique pole at $s=c(A)$,
where $c(A)$ denotes Chai's base change conductor of $A$
\cite{chai}. Moreover, for every embedding of $\Q_\ell$ in $\C$,
the complex number $\exp(2 \pi c(A) i)$ is an eigenvalue of the
monodromy transformation on $ H^g(A \times_K K^t,
\mathbb{Q}_{\ell})$, where $K^t$ denotes a tame closure of $K$,
and where $g$ is the dimension of $A$. This shows that a global
version of Denef and Loeser's motivic monodromy conjecture holds
for tamely ramified abelian varieties.

The situation for general Calabi-Yau varieties is at the moment
far less clear than in the abelian case. Our proofs for abelian
varieties rely heavily on the theory of N\'eron models, and these
methods do not extend to the general case. However, if we restrict
ourselves to equal characteristic zero, there is still much that
can be said, and we present some of our results under this
assumption in Section \ref{sec-degcy}.

A particular advantage in characteristic zero,  and the basis for
many applications, is that we can find an $sncd$-model of $X$,
i.e., a regular proper $R$-model $\mathcal{X}$ whose special fiber
$ \cX_s$ is a divisor with strict normal crossings. We explain in
Section \ref{sec-degcy} how the results in \cite{NiSe-Inv} yield
an explicit expression for $Z_X(T)$ in terms of the model
$\mathcal{X}$. This expression shows that $Z_X(T)$ is rational,
 and yields a finite subset of $\Q$ that contains all the poles of
 $Z_X(\LL^{-s})$.
However, due to cancellations in the formula, it is often
difficult to use this description to  determine the precise set of
 poles.

The opposite of the largest pole of $Z_X(\mathbb{L}^{-s})$ turns
out to be an interesting  invariant of $X$, we call it the log
canonical threshold $lct(X)$ of $X$. It can be easily computed on
the model $\cX$. We can show that $lct(X)$ corresponds to a
monodromy eigenvalue on the degree $\mathrm{dim}(X)$ cohomology of
$X$. The value $lct(X)$ is a global version of the log canonical
threshold for complex hypersurface singularities, we explain the
precise relationship in Section \ref{subsec-lctcomparison}. Since
we know that for an abelian $K$-variety $A$, the base change
conductor $c(A)$ is the unique pole of $Z_A(\LL^{-s})$, we find
that $lct(A)=-c(A)$. This yields an interesting relation between
the N\'eron model of $A$ and the birational geometry of
$sncd$-models of $A$. Our explicit expression for the zeta
function allows to compute many other arithmetic invariants of $A$
on an $sncd$-model, in particular the number of connected
components of the N\'eron model. This generalizes the results that
were known for elliptic curves. Conversely, we can use the zeta
function to extend many interesting invariants of abelian
$K$-varieties to arbitrary Calabi-Yau varieties.

\section{Preliminaries}\label{sec-preliminaries}
\subsection{Notation}
For every ring $A$, an $A$-variety is a reduced separated
$A$-scheme of finite type. An algebraic group over a field $F$ is
a reduced group scheme of finite type over $F$. We denote by $\mu$
the profinite group scheme of roots of unity.
\subsection{Local monodromy eigenvalues}\label{subsec-locmon}
Let $k$ be a subfield of $\C$, let $X$ be a $k$-variety, and let
$$f:X\to \A^1_k=\Spec k[t]$$ be a $k$-morphism. Let $x$ be a point
of $X(\C)$ such that $f(x)=0$. We denote by $X^{\an}$ the complex
analytification of $X\times_k \C$, by $f^{\an}:X^{\an}\to \C$ the
complex analytic map induced by $f$, and by $X_s^{\an}$ the zero
locus of $f^{\an}$ in $X$. We say that a complex number $\alpha$
is a local monodromy eigenvalue of $f$ at $x$ if there exists an
integer $j\geq 0$ such that $\alpha$ is an eigenvalue of the
monodromy transformation on $R^j\psi_{f^{\an}}(\C)_x$. Here
$$R\psi_{f^{\an}}(\C)\in D^b_c(X^{\an}_s,\C)$$ denotes the
 complex of nearby cycles associated to $f^{\an}$ \cite[\S4.2]{dimca}. If $X$ is smooth at $x$, then the complex vector space
$R^j\psi_{f^{\an}}(\C)_x$ is isomorphic to the degree $j$ singular
cohomology space of the Milnor fiber of $f^{\an}$ at the point
$x$.

If $f$ is a polynomial in $k[x_1,\ldots,x_n]$, then we can speak
of local monodromy eigenvalues of $f$ by considering $f$ as a
morphism $\A^n_k\to \A^1_k$.
\subsection{The Bernstein-Sato polynomial}\label{subsec-bernstein}
Let $k$ be a field of characteristic zero. Let $X$ be a smooth
irreducible $k$-variety of dimension $n$, endowed with a morphism
$$f:X\to \A^1_k=\Spec k[t].$$ Denote by $X_s$ the fiber of $f$
over the origin. For every closed point $x$ of $X_s$, we denote by
$k_x$ the residue field at $x$ and by $b_{f,x}(s)$ the
Bernstein-Sato polynomial of the formal germ of $f$ in
$\widehat{\mathcal{O}}_{X,x}\cong k_x[[x_1,\ldots,x_n]]$ (see
\cite[3.3.6]{bjork}). We call $b_{f,x}(s)$ the local
Bernstein-Sato polynomial of $f$ at $x$. If $k=\C$, then
$b_{f,x}(s)$ coincides with the Bernstein polynomial of the
analytic germ of $f$ in $\mathcal{O}_{X^{\an},x}$, by
\cite[\S4.2]{mebkhout}.

If $h:Y\to X$ is a morphism of smooth $k$-varieties and $y$ is a
closed point of $Y$ such that $x=h(y)$ and $h$ is \'etale at $x$,
then the faithfully flat local homomorphism
$\widehat{\mathcal{O}}_{X,x}\to \widehat{\mathcal{O}}_{Y,y}$
satisfies the conditions in \cite[\S4.2]{mebkhout}. It follows
 that $b_{f,x}(s)=b_{f\circ h,y}(s)$. The
same argument shows that $b_{f,x}(s)$ is invariant under arbitrary
extensions of the base field $k$. If $k=\C$, then Kashiwara has
shown that the roots of $b_{f,x}(s)$ are rational numbers
\cite{kashiwara}. By \cite{saito-SMF}, they lie in the interval
$]-n,0[$. Invoking the Lefschetz principle, we see that these
properties hold for arbitrary $k$.

If  $k=\C$, then it was proven by Malgrange \cite{malgrange-ast}
that, for every root $\alpha$ of the local Bernstein-Sato
polynomial $b_{f,x}(s)$, the value $\exp(2\pi i \alpha)$ is a
local monodromy eigenvalue of $f$ at some point of  $X^{\an}_s$.
Moreover, if we allow $x$ to vary in the zero locus of $f$, all
local monodromy eigenvalues arise in this way. Since $b_{f,x}(s)$
is invariant under extension of the base field $k$, this property
still holds over all subfields $k$ of $\C$.

By constructibility of the nearby cycles complex, the local
monodromy eigenvalues of $f$ form a finite set $\mathrm{Eig}(f)$.
 Thus, as $x$ runs through the set of closed points of $X_s$, the
polynomials $b_{f,x}(s)$ form a finite set, since they are all
monic polynomials whose roots belong to the finite set of rational
numbers $\alpha$ in $[-n,0[$ such that $\exp(2\pi i \alpha)$ lies
in $\mathrm{Eig}(f)$.
 We call the least
common multiple of the polynomials $b_{f,x}(s)$ the Bernstein-Sato
polynomial of $f$, and we denote it by $b_{f}(s)$. If $X=\A^n_k$,
then by \cite[\S4.2]{mebkhout}, this definition coincides with the
usual definition of the Bernstein-Sato polynomial of an element
$f$ in $k[x_1,\ldots,x_n]$.

\section{$P$-adic and motivic zeta functions}\label{sec-zetafunctions}
\subsection{The Poincar\'e series}
Let $f$ be an element of $\Z[x_1,\ldots,x_n]\setminus \Z$, for
some integer $n>0$, and let $p$ be a prime number. For every
integer $m\geq 0$, we denote by $S_m$ the set of solutions of the
congruence $f\equiv 0$ modulo $p^{m+1}$, i.e.,
$$S_m= \{a\in (\Z/p^{m+1}\Z)^n\,|\,f(a)\equiv 0 \mod
p^{m+1}\}.$$ We put $N_m=\sharp S_m$.

\begin{definition}
The Poincar\'e series associated to $f$ and $p$ is the generating
series
$$P(T)=\sum_{m\geq 0}N_m T^m\quad \in \Z[[T]].$$
\end{definition}

\begin{example}\label{ex-smooth}
If the closed subscheme $X$ of $\A^n_{\Z_p}$ defined by the
equation $f=0$ is smooth over $\Z_p$, then the Poincar\'e series
$P(T)$ is easy to compute. For every integer $m\geq 0$, the set
$S_m$ is the set of $(\Z/p^{m+1}\Z)$-valued points on $X$. Locally
at every point, $X$ admits an \'etale morphism to
$\A^{n-1}_{\Z_p}$. The infinitesimal lifting criterion for \'etale
morphisms
 implies that the map $S_{m+1}\to S_m$ is surjective, and that every
fiber has cardinality $p^{n-1}$. In this way, we find that
\begin{equation}\label{eq-smooth}
P(T)=\frac{\sharp X(\mathbb{F}_p)}{1-p^{n-1}T}.\end{equation}
\end{example}

 If
$X$ is not smooth over $\Z_p$, then the behaviour of the values
$N_m$ is much harder to understand. The following conjecture was
mentioned in \cite{borevich-shafarevich}, Chapter 1, Section 5,
Problem 9.

\begin{conjecture}\label{conj-rat}
The Poincar\'e series $P(T)$ is rational, i.e., it belongs to the
subring $\Q(T)\cap \Z[[T]]$ of $\Q((T))$.
\end{conjecture}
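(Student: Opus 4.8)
The plan is to reduce the statement to the rationality of Igusa's local zeta function $Z(s)=\int_{\Z_p^n}|f(x)|^s\,|dx|$, and then to prove the latter by means of Hironaka's resolution of singularities. First I would set up the elementary measure‑theoretic dictionary between the integers $N_m$ and $Z(s)$. For each $i\geq 0$ let $c_i$ be the Haar measure of $\{x\in\Z_p^n\mid \ord_p f(x)=i\}$, normalized so that $\Z_p^n$ has measure $1$. Since every solution of $f\equiv 0$ modulo $p^{m+1}$ lifts to a ball of measure $p^{-(m+1)n}$, one gets $N_m=p^{(m+1)n}\sum_{i>m}c_i$, while $Z(s)=\sum_{i\geq 0}c_i\,p^{-is}$. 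Summing the resulting geometric‑type series and using $\sum_i c_i=1$ (valid because $f$ is not the zero polynomial), a short manipulation gives
\begin{equation*}
P(T)=\frac{p^n\bigl(1-Z(p^nT)\bigr)}{1-p^nT},
\end{equation*}
where $Z(p^nT)$ denotes the power series obtained from $\sum_i c_i\,u^i$ by the substitution $u=p^nT$. Hence it suffices to prove that $Z(s)$ is a rational function of $p^{-s}$.

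For that, I would invoke Hironaka's theorem to choose an embedded resolution $h\colon Y\to \A^n_{\Q_p}$ of the hypersurface $\{f=0\}$: $Y$ is smooth and proper over $\Q_p$, $h$ is an isomorphism outside $\{f=0\}$, and the divisor of $f\circ h$ together with the exceptional locus has strict normal crossings, with numerical data $(N_i,\nu_i-1)_{i\in J}$ recording the multiplicities of $f\circ h$ and of the relative canonical divisor along each prime component $E_i$. After spreading $h$ out to a model over $\Z_p$ and possibly shrinking, the change of variables formula for $p$-adic integrals yields $Z(s)=\int_{Y(\Z_p)}|f\circ h|^s\,|h^*dx|$, the point being that $h$ induces a measure‑preserving bijection off a set of measure zero. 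Next I would stratify $Y(\Z_p)$ according to the set $I\subseteq J$ of components $E_i$ containing the reduction of a given point. On the part lying over the locally closed stratum $E_I^{\circ}=\bigcap_{i\in I}E_i\setminus\bigcup_{j\notin I}E_j$, one may pick local coordinates in which $f\circ h$ is a unit times $\prod_{i\in I}x_i^{N_i}$ and $h^*dx$ is a unit times $\prod_{i\in I}x_i^{\nu_i-1}$, so the contribution is a product of elementary integrals of the shape $\sum_{\ell\geq 1}(\mathrm{const})\,p^{-\ell(N_is+\nu_i)}$. Collecting contributions produces the closed formula
\begin{equation*}
Z(s)=p^{-n}\sum_{I\subseteq J}\#E_I^{\circ}(\F_p)\prod_{i\in I}\frac{(p-1)p^{-N_is-\nu_i}}{1-p^{-N_is-\nu_i}},
\end{equation*}
which is manifestly a rational function of $p^{-s}$ with denominator dividing $\prod_{i\in J}(1-p^{-N_is-\nu_i})$. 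Substituting back into the expression for $P(T)$ shows $P(T)\in\Q(T)$; that it also lies in $\Z[[T]]$ is automatic from $N_m\in\Z$.

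The main technical obstacle is the interaction of the resolution step with the $p$-adic change of variables: one has to justify the existence of a suitable proper $\Z_p$-model of $h$, control its behaviour over the closed point of $\Spec\Z_p$, verify measure‑preservation off a null set, and carry out the bookkeeping of the data $(N_i,\nu_i)$ cleanly enough to obtain the explicit formula. An alternative route, which entirely avoids resolution of singularities, is Denef's proof via $p$-adic cell decomposition, resting on Macintyre's quantifier‑elimination theorem for $\Q_p$; there the principal effort goes into constructing a cell decomposition of $\Z_p^n$ adapted to $f$ on which $|f|$ becomes a monomial in the cell parameters, after which the same geometric‑series computation applies.
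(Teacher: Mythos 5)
Your proposal follows exactly the route of the paper: you reduce rationality of $P(T)$ to Igusa's rationality theorem for $Z^p_f(s)$ via the identity $P(T)=p^n\bigl(1-Z(p^nT)\bigr)/(1-p^nT)$ (the paper's formula relating the Poincar\'e series to the zeta function), and you prove the latter by embedded resolution of singularities plus the $p$-adic change-of-variables formula, which is precisely the argument the paper attributes to Igusa. (One minor caveat you already flag yourself: the closed formula with $\#E_I^{\circ}(\F_p)$ is Denef's and requires good reduction of the resolution, hence $p\gg0$; for a fixed $p$ one must argue locally on the $p$-adic manifold without it, exactly as Igusa does.)
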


\subsection{The $p$-adic zeta function}
\begin{definition}
 We denote by $|\cdot|_p$ the $p$-adic absolute value on $\Q_p$.
The $p$-adic zeta function of $f$ is defined by
$$Z^p_f(s)=\int_{\Z_p^n}|f(x)|_p^s\, dx$$ for every complex number $s$
with $\Re(s)>0$.
\end{definition}

The $p$-adic zeta function $Z^p_f$ is an analytic function on the
complex right half plane
 $\Re(s)>0$.
 It was introduced by Weil, and
systematically studied by Igusa.  It can be defined in a much more
general set-up, starting from a $p$-adic field $K$, an analytic
function $f$ on $K^n$, a Schwartz-Bruhat function $\Phi$ on $K^n$
and a character $\chi$ of $\mathcal{O}_K^{\times}$. Moreover, one
can formulate analogous definitions over the archimedean local
fields $\R$ and $\C$. For a survey, we refer to
\cite{Denef-bourbaki} or \cite{Igusa00}.

We can write $Z^p_f(s)$ as a power series in $p^{-s}$, in the
following way:
$$Z^p_f(s)=\sum_{m\geq 0} \mu_{\mathrm{Haar}}\{a\in
(\Z_p)^n\,|\,v_p(f(a))=m\}p^{-ms}$$ where $v_p$ denotes the
$p$-adic valuation on $\Z_p$. Direct computation shows that, if we
set $T=p^{-s}$, then $Z^p_f(s)$ is related to the Poincar\'e
series $P(T)$ by the formula
\begin{equation}\label{eq-poinzeta}
P(p^{-n}T)=\frac{p^{n}(1-Z^p_f(s))}{1-T}
\end{equation}
Thus the zeta function $Z^p_f(s)$ contains exactly the same
information as the Poincar\'e series $P(T)$, namely, the values
$N_m$ for all $m\geq 0$.

\begin{example}\label{ex-zetasmooth}
In the set-up of Example \ref{ex-smooth}, we have
$$Z^p_f(s)=1-\sharp X(\mathbb{F}_p)p^{-(n-1)}\left( \frac{p^s-1}{p^{s+1}-1} \right).$$
\end{example}

\begin{theorem}[Igusa \cite{Igusa74,Igusa75}]\label{thm-rat}
The $p$-adic zeta function $Z_f^p(s)$ is rational in the variable
$p^{-s}$. In particular, it admits a meromorphic continuation to
$\C$.
\end{theorem}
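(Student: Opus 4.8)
The plan is to reduce the integral $Z^p_f(s)$ to a monomial situation by Hironaka's embedded resolution of singularities, and then to evaluate the resulting $p$-adic integral explicitly as a finite sum of geometric series in $p^{-s}$. Since $\Q_p$ has characteristic zero, Hironaka's theorem provides a connected smooth $\Q_p$-variety $Y$ of dimension $n$ together with a proper birational morphism $h\colon Y\to \A^n_{\Q_p}$ such that, writing $\mathrm{Jac}_h$ for a local equation of the relative canonical divisor of $h$, the divisor $\mathrm{div}(f\circ h)+\mathrm{div}(\mathrm{Jac}_h)$ has strict normal crossings on $Y$. The morphism $h$ induces a map of $p$-adic analytic manifolds $Y(\Q_p)\to \Q_p^n$ which is a local analytic isomorphism over the complement of a proper closed analytic subset of $\Q_p^n$ of Haar measure zero, and $h^{-1}(\Z_p^n)$ is compact. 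The change of variables formula for $p$-adic integrals then gives
\[
Z^p_f(s)=\int_{h^{-1}(\Z_p^n)}|f(h(y))|_p^{\,s}\,|\mathrm{Jac}_h(y)|_p\,dy.
\]

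Next I would cover the compact set $h^{-1}(\Z_p^n)$ by finitely many closed balls contained in coordinate charts of $Y$; on each such ball the strict normal crossings condition reads $f(h(y))=\varepsilon(y)\prod_{i}y_i^{N_i}$ and $\mathrm{Jac}_h(y)=\eta(y)\prod_{i}y_i^{\nu_i}$, where $y_1,\ldots,y_n$ are local analytic coordinates, $\varepsilon,\eta$ are nonvanishing analytic functions, and $N_i,\nu_i$ are nonnegative integers. After subdividing each ball into finitely many smaller balls, I may assume that $|\varepsilon(y)|_p$ and $|\eta(y)|_p$ are constant on each piece, and that each coordinate $y_i$ ranges either over a ball $p^{b_i}\Z_p$ centred at $0$ or over a ball on which $|y_i|_p$ is constant. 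The integral over such a piece then factors as a constant times a product over $i$ of integrals of the form $\int_{p^{b_i}\Z_p}|y|_p^{\,sN_i+\nu_i}\,dy$, together with finitely many factors that are constant multiples of integer powers of $p^{-s}$. A direct computation gives
\[
\int_{p^{b}\Z_p}|y|_p^{\,t}\,dy=\frac{(1-p^{-1})\,p^{-b(t+1)}}{1-p^{-(t+1)}}
\]
for $\Re(t)>-1$, and substituting $t=sN_i+\nu_i$ shows that each such factor is a rational function of $p^{-s}$ whose denominator is a product of terms $1-p^{-(sN_i+\nu_i+1)}$. Summing the finitely many contributions, $Z^p_f(s)$ is a rational function of $p^{-s}$; since $s\mapsto p^{-s}$ is entire, it therefore extends meromorphically to $\C$, with poles contained in the set of $s$ satisfying $p^{-(sN_i+\nu_i+1)}=1$ for some pair $(N_i,\nu_i)$ produced by the resolution.

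The only genuine obstacle is the $p$-adic change of variables formula for the birational morphism $h$, together with the justification that the exceptional locus may be discarded; this rests on the facts that $h$ restricts to a $p$-adic analytic isomorphism over the complement of a proper closed analytic subset of measure zero, and that $d(h(y))=|\mathrm{Jac}_h(y)|_p\,dy$ on $p$-adic analytic charts, after which the argument is merely the bookkeeping of summing finitely many geometric series. An alternative route that avoids resolution of singularities uses a $p$-adic cell decomposition in the sense of Denef, based on Macintyre's quantifier elimination for the valued field $\Q_p$, which directly expresses $Z^p_f(s)$ as a finite sum of monomial integrals of the above type; the conclusion is the same.
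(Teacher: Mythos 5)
Your proposal is correct and follows the same approach the paper attributes to Igusa: pull back the integral along an embedded resolution $h\colon Y\to\A^n$, use the $p$-adic change of variables formula, reduce locally to monomial integrals in normal crossings coordinates, and sum finitely many geometric series in $p^{-s}$. The explicit local computation and the remark on the cell-decomposition alternative are both accurate; no gaps.
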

By \eqref{eq-poinzeta}, this gives an affirmative answer to
Conjecture \ref{conj-rat}:
\begin{cor}
The Poincar\'e series $P(T)$ is rational.
\end{cor}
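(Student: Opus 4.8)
The plan is to deduce the corollary formally from Igusa's rationality theorem (Theorem \ref{thm-rat}) together with the explicit relation \eqref{eq-poinzeta} between the Poincar\'e series and the $p$-adic zeta function. By definition $P(T)=\sum_{m\geq 0}N_m T^m$ with each $N_m$ a non-negative integer, so $P(T)$ already lies in $\Z[[T]]$; what must be shown is that this power series represents a rational function, so that $P(T)\in \Q(T)\cap \Z[[T]]$.

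Setting $T=p^{-s}$, rewrite \eqref{eq-poinzeta} as
$$P(p^{-n}T)=\frac{p^{n}\bigl(1-Z^p_f(s)\bigr)}{1-T}.$$
By Theorem \ref{thm-rat}, $Z^p_f(s)$ is a rational function of the variable $p^{-s}=T$, so the right-hand side is a rational function of $T$, and moreover its Taylor expansion at $T=0$ coincides with the power series $\sum_{m\geq 0}N_m p^{-nm}T^m$. Hence $P(p^{-n}T)$, regarded as a formal power series in $T$, agrees with an element of $\Q(T)$.

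It remains to undo the auxiliary substitution. The change of variable $U=p^{-n}T$, equivalently $T=p^{n}U$, is an invertible $\Q$-linear rescaling: it carries $\Q(T)$ onto $\Q(U)$ and $\Q[[T]]$ onto $\Q[[U]]$. Applying it to the identity above yields $P(U)=p^{n}\bigl(1-Z^p_f(s)\bigr)/(1-p^{n}U)\in\Q(U)$ with $s$ such that $p^{-s}=p^{n}U$, and combining this with $P(U)\in\Z[[U]]$ gives $P(T)\in\Q(T)\cap\Z[[T]]$, which is the assertion of Conjecture \ref{conj-rat}.

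The argument is entirely formal: all of the substantive content sits in Igusa's theorem, which we are free to invoke. The only point requiring (minor) care is the bookkeeping in the substitution $T=p^{-s}$, making sure that rationality in the auxiliary variable $p^{-s}$ genuinely transfers to rationality of the Poincar\'e series in its own variable $T$; there is no real obstacle here beyond tracking the rescaling correctly.
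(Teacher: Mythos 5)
Your proof is correct and is exactly the paper's argument: the paper deduces the corollary in a single line from the relation \eqref{eq-poinzeta} and Theorem \ref{thm-rat}, and you have simply unpacked the formal rescaling $T \mapsto p^{-n}T$ that this entails. Nothing more is required.
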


Igusa proved Theorem \ref{thm-rat} by taking an embedded
resolution of singularities for the zero locus of $f$ in the
$p$-adic manifold $\Z_p^n$, and applying the change of variables
formula for $p$-adic integrals to compute the $p$-adic zeta
function locally on the resolution space. This essentially reduces
the problem to the case where $f$ is a monomial, in which case one
can make explicit computations.

The poles of $P(T)$, or equivalently, $Z^p_f(s)$, contain
information about the asymptotic behaviour of $N_m$ as $m\to
\infty$. Igusa's proof shows that there exists a finite subset
$\mathscr{S}^p$ of $\Q_{<0}$ such that the set of poles of
$Z^p_f(s)$ is given by
$$\{\alpha+\frac{2\pi i}{\ln p}\beta\,|\,\alpha \in
\mathscr{S}^p,\ \beta \in \Z\}.$$ By Denef's explicit formula for
the $p$-adic zeta function in \cite{denef-form}, one can associate
to every embedded resolution for $f$ over $\Q$ a finite subset
$\mathscr{S}$ of $\Q_{<0}$ such that $\mathscr{S}^p\subset
\mathscr{S}$ for $p\gg 0$. The set $\mathscr{S}$ is computed from
the so-called {\em numerical data} of the resolution (in the
notation of \cite{denef-form}, $\mathscr{S}$ is the set of values
$-\nu_i/N_i$ with $i$ in $T$). In general, many of the elements in
$\mathscr{S}$ are not poles of $Z^p_f(s)$, due to cancellations in
the formula for the zeta function. This phenomenon is related to
the {\em Monodromy Conjecture}.

\subsection{Igusa's monodromy conjecture}
Example \ref{ex-zetasmooth} suggests that the poles of the zeta
function $Z^p_f(s)$ should be related to the singularities of the
polynomial $f$. The relation is made precise by Igusa's Monodromy
Conjecture.
\begin{conjecture}[Igusa's Monodromy Conjecture, strong
form]\label{conj-strongmc} If we denote by $b_f(s)$ the
Bernstein-Sato polynomial of $f$, then for $p\gg 0$, the function
$b_f(s)Z^p_f(s)$ is holomorphic at every point of $\R$.
\end{conjecture}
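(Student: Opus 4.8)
This is the (strong form of the) Monodromy Conjecture, which is open in the generality stated; what follows is the strategy along which it has been verified in the known cases and which one would attempt in general.

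First I would transfer the problem to an embedded resolution, as in Igusa's proof of Theorem \ref{thm-rat}. Fix an embedded resolution $h\colon Y\to\A^n_{\Q}$ of the pair $(\A^n,\{f=0\})$; let $E_i$ ($i\in T$) be the irreducible components of $h^{-1}(\{f=0\})$, let $N_i$ be the multiplicity of $f\circ h$ along $E_i$ and $\nu_i-1$ that of the relative canonical divisor $K_{Y/\A^n}$, and for $I\subseteq T$ put $E_I^\circ=(\bigcap_{i\in I}E_i)\setminus\bigcup_{j\notin I}E_j$. By Denef's explicit formula \cite{denef-form}, for $p\gg 0$
$$Z^p_f(s)\;=\;p^{-n}\!\!\sum_{I\subseteq T}\sharp\overline{E_I^\circ}(\F_p)\prod_{i\in I}\frac{(p-1)\,p^{-\nu_i-N_is}}{1-p^{-\nu_i-N_is}},$$
where $\overline{E_I^\circ}$ is the reduction mod $p$ of $E_I^\circ$ and the empty product is $1$. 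In particular every pole of $Z^p_f(s)$ lies in $\{-\nu_i/N_i\mid i\in T\}+\tfrac{2\pi i}{\ln p}\Z$. Since $b_f(s)Z^p_f(s)$ need only be shown holomorphic on $\R$, it suffices to prove: \emph{if $s_0=-\nu_i/N_i$ is a pole of $Z^p_f(s)$ of order $e$, then $(s-s_0)^e$ divides $b_f(s)$.}

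The second ingredient relates the \emph{same} numerical data to $b_f(s)$ and to monodromy. By \cite{kashiwara} and the bounds of Lichtin and Koll\'ar, every root of $b_f(s)$ has the form $-(\nu_i+m)/N_i$ with $i\in T$ and $m\in\Z_{\geq 0}$; by Malgrange's theorem \cite{malgrange-ast} (recalled in Section \ref{subsec-bernstein}) each root $\alpha$ of $b_f(s)$ yields a monodromy eigenvalue $e^{2\pi i\alpha}$; and by A'Campo's formula the monodromy eigenvalues of $f$ at a point $x$ are products of roots of unity whose orders divide the $N_i$ of the components $E_i$ meeting $h^{-1}(x)$. Thus the poles of $Z^p_f(s)$, the roots of $b_f(s)$ and the monodromy eigenvalues are all encoded in the list $(N_i,\nu_i)$, and the conjecture becomes the assertion that whenever a component $E_i$ survives the cancellations in the displayed formula and contributes a pole at $-\nu_i/N_i$ of order $e$, the rational number $-\nu_i/N_i$ is \emph{itself} a root of $b_f(s)$ of multiplicity $\geq e$ — that is, the translate $m=0$ is forced, with the right multiplicity.

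The crux, and the reason the conjecture is open, is exactly the cancellation flagged after Theorem \ref{thm-rat}: the residue of $Z^p_f(s)$ at a candidate value $-\nu_i/N_i$ is an alternating sum, over the strata $E_I^\circ$ with $i\in I$, of the point counts $\sharp\overline{E_I^\circ}(\F_p)$ times explicit factors, and for most $i$ this sum vanishes identically in $p$. One needs a criterion — intrinsic to the stratified geometry of $h^{-1}(\{f=0\})$ — that detects when this residue does \emph{not} vanish, and that at the same time exhibits a root of $b_f(s)$ at $-\nu_i/N_i$ of the correct multiplicity (via the $V$-filtration or the microlocal $b$-function). No such criterion is known for a general polynomial. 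It is available, and the programme can be carried through, when the combinatorics of some resolution is rigid enough: in dimension $n=2$ (Loeser), for $f$ nondegenerate with respect to its Newton polyhedron, for quasi-homogeneous and for Newton-nondegenerate isolated singularities, and for several families of hyperplane arrangements. In each case one enumerates the possible intersection patterns of the $E_i$, identifies by hand the non-cancelling values $-\nu_i/N_i$, and exhibits matching roots of $b_f(s)$ coming from the same configuration; doing this for an arbitrary $f$ is precisely the open problem.
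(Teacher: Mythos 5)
The statement you were asked to address is Conjecture \ref{conj-strongmc}, which is an open conjecture --- indeed it is the motivating problem of the paper --- so there is no proof here to compare against. You correctly recognize this, and your account of the standard programme is accurate: pass to an embedded resolution and use Denef's explicit formula \cite{denef-form} to locate the candidate poles $-\nu_i/N_i$, invoke Malgrange \cite{malgrange-ast}, Kashiwara \cite{kashiwara} and A'Campo's formula to encode both the roots of $b_f(s)$ and the monodromy eigenvalues in the same numerical data, and observe that the whole difficulty is the absence of a general criterion detecting which candidate poles survive the cancellations in the residue. Your list of settled cases (plane curves, Newton-nondegenerate and quasi-homogeneous singularities, various arrangement families) is also correct. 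Since you never claim to close the argument but instead point precisely at the missing non-cancellation criterion, there is no gap to flag; you have identified the open problem for what it is.
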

In other words, the conjecture states that for every pole $\alpha$
of $Z^p_f(s)$, the real part $\Re(\alpha)$ is a root of the
Bernstein-Sato polynomial $b_f(s)$, and the order of the pole is
at most the multiplicity of the root.  The Monodromy Conjecture
describes in a precise way how the singularities of $f$ influence
the asymptotic behaviour of the values $N_m$ as $m\to \infty$, for
$p\gg 0$.

\begin{example}
Assume that the closed subscheme of $\A^n_{\Q}$ defined by the
equation $f=0$ is smooth over $\Q$. Then the Bernstein-Sato
polynomial $b_f(s)$ is equal to $s+1$. For $p\gg 0$, the closed
subscheme of $\A^n_{\Z_p}$ defined by $f=0$ is smooth, so that the
zeta function $Z^p_f(s)$ has a unique real pole at $s=-1$, of
order one, by Example \ref{ex-zetasmooth}.
\end{example}

 Because of Kashiwara and Malgrange's result mentioned in Section \ref{subsec-bernstein}, Conjecture \ref{conj-strongmc} implies the following weaker
statement.

\begin{conjecture}[Igusa's Monodromy Conjecture, weak
form]\label{conj-weakmc} For $p\gg 0$, the following holds: if
$\alpha$ is a pole of $Z^p_f(s)$, then $\exp(2\pi \Re(\alpha)i)$
is an eigenvalue of the monodromy action on
$R^j\psi_{f^{\an}}(\C)_x$, for some integer $j\geq 0$ and some
point $x$ of $\C^n$ with $f^{\an}(x)=0$.
\end{conjecture}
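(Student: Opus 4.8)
The plan is to deduce the weak form directly from the strong form (Conjecture \ref{conj-strongmc}), together with the theorems of Kashiwara and Malgrange recalled in Section \ref{subsec-bernstein}; no further analysis of the zeta function is required. So I would fix a prime $p$ large enough that Conjecture \ref{conj-strongmc} holds, and let $\alpha$ be a pole of $Z^p_f(s)$. Recall from Igusa's description of the poles that they occur in arithmetic progressions $\Re(\alpha)+\tfrac{2\pi i}{\ln p}\Z$, so in particular the real number $\gamma:=\Re(\alpha)$ is itself a pole of $Z^p_f(s)$. Since $b_f(s)$ is a polynomial, hence holomorphic on all of $\C$, the function $b_f(s)Z^p_f(s)$ can be holomorphic at the real point $\gamma$ only if $b_f(\gamma)=0$. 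Thus the first step extracts from $\alpha$ a root $\gamma$ of the Bernstein-Sato polynomial $b_f(s)$; by Kashiwara's theorem (Section \ref{subsec-bernstein}) it is a negative rational number, lying in $\,]-n,0[\,$.

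The second step is to invoke Malgrange's theorem exactly as stated in Section \ref{subsec-bernstein}, applied to $f$ viewed as a morphism $\A^n_\C\to\A^1_\C$ --- equivalently over $\Q$ first and then extended, using that $b_f(s)$ and the set $\mathrm{Eig}(f)$ of local monodromy eigenvalues are unchanged under the base change $\Q\hookrightarrow\C$. Malgrange's theorem says that $\exp(2\pi i\gamma)$ is a local monodromy eigenvalue of $f$ at some point of $X^{\an}_s$; unwinding the definition given in Section \ref{subsec-locmon}, this is precisely the assertion that there exist an integer $j\geq 0$ and a point $x\in\C^n$ with $f^{\an}(x)=0$ such that $\exp(2\pi i\gamma)$ is an eigenvalue of the monodromy transformation on $R^j\psi_{f^{\an}}(\C)_x$. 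Since $\gamma=\Re(\alpha)$ is real, $\exp(2\pi\Re(\alpha)i)=\exp(2\pi i\gamma)$, which is exactly the conclusion of Conjecture \ref{conj-weakmc}. One only ever needs to run this argument for the finitely many $\alpha_0\in\mathscr{S}^p$, since every pole of $Z^p_f(s)$ has its real part among them.

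I expect no genuine obstacle in this deduction, which is formal; the real content lies in Conjecture \ref{conj-strongmc} itself, which is open in general, so the above only shows that the weak form is no harder than the strong form (granting the known results of Kashiwara and Malgrange). Should one instead want to attack the weak form without the strong form, the natural route is a direct comparison between the candidate poles produced by Denef's explicit formula --- the values $-\nu_i/N_i$ attached to the numerical data of an embedded resolution --- and the monodromy eigenvalues of $f$, for instance through A'Campo's formula for the zeta function of the monodromy read off the same resolution; this is the mechanism behind the known low-dimensional cases, but I would not pursue it here.
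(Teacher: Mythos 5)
Your deduction is exactly the one the paper has in mind: it simply states, immediately before Conjecture \ref{conj-weakmc}, that the weak form follows from the strong form (Conjecture \ref{conj-strongmc}) together with Kashiwara's and Malgrange's results recalled in Section \ref{subsec-bernstein}. You have merely spelled out the intermediate steps (extracting the real pole from Igusa's arithmetic-progression description, cancelling it against a real root of $b_f(s)$, passing from a root of the global $b_f(s)$ to a root of some local $b_{f,x}(s)$, and then invoking Malgrange after base change to $\C$), all of which are correct, so this is the same argument, just made explicit.
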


Several special cases of the Monodromy Conjecture have been
proven, but the general case remains wide open. For a survey of
known results and the relation with archimedean zeta functions
over the local fields $\R$ and $\C$, we refer to \cite{Ni-japan}.
\subsection{The motivic zeta function}\label{subsec-motzeta}
In the nineties, Denef and Loeser defined a ``motivic'' object
$Z^{\mathrm{mot}}_f(s)$ that interpolates the $p$-adic zeta
functions for $p\gg 0$. It captures the geometric nature of the
$p$-adic zeta functions and explains their uniform behaviour in
$p$. Denef and Loeser called $Z^{\mathrm{mot}}_f(s)$ the {\em
motivic zeta function} associated to $f$. They showed that it is
rational over an appropriate ring of coefficients, and they
conjectured that its poles correspond to roots of the
Bernstein-Sato polynomial as in Conjecture \ref{conj-strongmc}. We
will refer to this conjecture as the Motivic Monodromy Conjecture.
It will be discussed in more detail in Section
\ref{subsec-motmoncon}. For a survey on motivic integration and
motivic zeta functions, and the precise relation with $p$-adic
zeta functions, we refer to \cite{Ni-japan}.

Denef and Loeser defined the motivic zeta function by measuring
spaces of the form
\begin{equation}\label{eq-trunc}\{\psi\in
(k[[t]]/t^{m+1})^n\,|\,f(\psi)\equiv 0 \mod
t^{m+1}\}\end{equation} with $m\geq 0$ and $k$ a field of
characteristic zero. In contrast with the $p$-adic case, the set
\eqref{eq-trunc} is no longer finite, because $k((t))$ is not a
local field. Thus we cannot simply count points in
\eqref{eq-trunc}. Instead, one shows that one can interpret
\eqref{eq-trunc} as the set of $k$-points on an algebraic
$\Q$-variety, and one uses the {\em Grothendieck ring of
varieties} to measure the size of an algebraic variety (see
Section \ref{subsec-K0}).

In the following section, we will explain an alternative
interpretation of the motivic zeta function, due to J. Sebag and
the second author \cite{NiSe-Inv}\cite{Ni-trace}, based on Loeser
and Sebag's theory of motivic integration on non-archimedean
analytic spaces \cite{motrigid}. This interpretation will
eventually lead to the definition of the motivic zeta function of
an abelian variety and, more generally, a Calabi-Yau variety over
a complete discretely valued field.
\section{Motivic integration on rigid varieties and the analytic Milnor fiber}\label{sec-motrig}
\subsection{The Grothendieck ring of varieties}\label{subsec-K0}
Let $F$ be a field.  We denote by $\KVar{F}$ the {\it Grothendieck
ring of varieties over} $F$. As an abelian group, $\KVar{F}$ is
defined by the following presentation:
\begin{itemize}
\item {\em generators:} isomorphism classes $[X]$ of separated
$F$-schemes of finite type $X$,
 \item {\em relations:} if $X$ is
a separated $F$-scheme of finite type and $Y$ is a closed
subscheme of $X$, then
$$[X]=[Y]+[X\setminus Y].$$
These relations are called {\em scissor relations}.
\end{itemize}
By the scissor relations, one has $[X]=[X_{\redu}]$  for every
separated $F$-scheme of finite type $X$, where $X_{\redu}$ denotes
the maximal reduced closed subscheme of $X$. We endow the group
$\KVar{F}$ with the unique ring structure such that
$$[X]\cdot [X']=[X\times_F X']$$ for all $F$-varieties $X$
and $X'$. The identity element for the multiplication is the class
$[\Spec F]$ of the point. We denote by $\LL$ the class $[\A^1_F]$
of the affine line, and by $\mathcal{M}_F$ the localization of
$\KVar{F}$ with respect to $\LL$.

The scissor relations allow to cut an $F$-variety into
subvarieties. For instance, we have
$$[\Proj^2_F]=\LL^2+\LL+1$$ in $\KVar{F}$. Since these are the
only relations that we impose on the isomorphism classes of
$F$-varieties, taking the class of a variety in the Grothendieck
ring should be viewed as the most general way to measure the size
of the variety.

For technical reasons, we'll also need to consider
 the {\em modified Grothendieck ring of
$F$-varieties} $\Kmod{F}$ \cite[\S\,3.8]{NiSe-K0}. This is the
quotient of $\KVar{F}$ by the ideal $\mathcal{I}_F$ generated by
elements of the form $[X]-[Y]$ where $X$ and $Y$ are separated
$F$-schemes of finite type such that there exists a finite,
surjective, purely inseparable $F$-morphism $Y\to X.$

If $F$ has characteristic zero, then it is easily seen that
$\mathcal{I}_F$ is the zero ideal \cite[3.11]{NiSe-K0}. It is not
known if $\mathcal{I}_F$ is non-zero if $F$ has positive
characteristic. In particular, if $F'$ is a non-trivial finite
purely inseparable extension of $F$, it is not known whether
$[\Spec F']\neq 1$ in $\KVar{F}$. With slight abuse of notation,
we'll again denote by $\LL$ the class of $\A^1_F$ in $\Kmod{F}$.
We denote by $\mathcal{M}^{\mathrm{mod}}_F$ the localization of
$\Kmod{F}$ with respect to $\LL$.

For a detailed survey on the Grothendieck ring of varieties and
some intriguing open questions, we refer to \cite{NiSe-K0}.
\subsection{Motivic integration on rigid
varieties}\label{subsec-motint} Let $R$ be a complete discrete
valuation ring, with quotient field $K$ and perfect residue field
$k$. We fix an absolute value on $K$ by assigning a value
$|\pi|\in \,]0,1[$ to a uniformizer $\pi$ of $R$.
 If $R$ has equal
characteristic, then we set $\mathcal{M}_k^R=\mathcal{M}_k$. If
$R$ has mixed characteristic, we set
$\mathcal{M}^R_k=\mathcal{M}^{\mathrm{mod}}_k$.

If $\X$ is a formal $R$-scheme of finite type, then we denote by
$\X_s=\X\times_R k$ its special fiber (this is a $k$-scheme of
finite type) and by $\X_\eta$ its generic fiber (this is a
quasi-compact and quasi-separated rigid $K$-variety; see
\cite{raynaud} or \cite{formrigI}).

\begin{definition}\label{def-bounded}
A rigid $K$-variety $X$ is called {\em bounded} if there exists a
quasi-compact
 open subvariety $U$ of $X$ such that $U(K')=X(K')$ for all finite
 unramified extensions $K'$ of $K$.
\end{definition}

\begin{definition}
Let $X$ be a rigid $K$-variety. A weak N\'eron model for $X$ is a
 smooth formal $R$-scheme of finite type $\X$, endowed
with an open immersion $\X_\eta\to X$, such that
$\X_\eta(K')=X(K')$ for all finite unramified extensions $K'$ of
$K$.
\end{definition}
Note that, if $X$ is separated, then $\X$ will be separated by
\cite[4.7]{formrigI}.

\begin{theorem}[Bosch-Schl\"oter]
A quasi-separated smooth rigid $K$-variety $X$ is bounded if and
only if $X$ admits a weak N\'eron model.
\end{theorem}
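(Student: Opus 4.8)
The plan is to prove the nontrivial direction --- that boundedness implies the existence of a weak N\'eron model --- by combining Raynaud's theory of formal models with the N\'eron smoothening process; the reverse implication is immediate and I will dispose of it first. Indeed, if $X$ admits a weak N\'eron model $\mathfrak{X}$, then $\mathfrak{X}_\eta$ is the generic fiber of a formal $R$-scheme of finite type, hence quasi-compact, and it is open in $X$ with $\mathfrak{X}_\eta(K')=X(K')$ for all finite unramified extensions $K'/K$; so $U=\mathfrak{X}_\eta$ exhibits $X$ as bounded.

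For the converse, I would start from a quasi-compact open subvariety $U\subseteq X$ with $U(K')=X(K')$ for every finite unramified $K'/K$. Since $X$ is quasi-separated, $U$ is quasi-compact and quasi-separated, so Raynaud's theorem \cite{raynaud} provides a flat formal $R$-model $\mathfrak{U}$ of finite type with $\mathfrak{U}_\eta=U$; because $U$ is $K$-smooth, $\mathfrak{U}$ has smooth generic fiber. Along the way I would record the elementary but essential fact that for any formal $R$-scheme of finite type $\mathfrak{V}$ and any finite extension $K'/K$ with valuation ring $R'$ one has $\mathfrak{V}(R')=\mathfrak{V}_\eta(K')$: a $K'$-rational point of $\mathfrak{V}_\eta$ has a canonical reduction lying in some affine formal chart $\Spf A\subseteq\mathfrak{V}$, and on that chart the associated map $A[1/\pi]^{\wedge}\to K'$ carries power-bounded elements into $R'$, so the point extends to an $R'$-point.

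The heart of the argument is then to apply the N\'eron smoothening process to $\mathfrak{U}$. In the version for formal schemes (due to Bosch and Schl\"oter, adapting Bosch--L\"utkebohmert--Raynaud) this yields a morphism $h\colon \mathfrak{U}'\to\mathfrak{U}$ that is a finite composition of dilatations with centers supported in the non-smooth locus, such that the smooth locus $\mathfrak{U}'_{\mathrm{sm}}$ (an open formal subscheme of $\mathfrak{U}'$) satisfies $\mathfrak{U}'_{\mathrm{sm}}(R')=\mathfrak{U}(R')$ for every finite unramified $R'/R$. I would then observe that each individual dilatation, with center $Y$ in the special fiber, induces on rigid generic fibers an open immersion onto the affinoid subdomain of points reducing into $Y$; composing, $\mathfrak{U}'_\eta\to\mathfrak{U}_\eta$ is an open immersion, and together with $\mathfrak{U}_\eta=U\hookrightarrow X$ this produces an open immersion $(\mathfrak{U}'_{\mathrm{sm}})_\eta\hookrightarrow X$.

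To finish, I would check that $\mathfrak{U}'_{\mathrm{sm}}$ is a weak N\'eron model of $X$: it is a smooth formal $R$-scheme of finite type equipped with the open immersion of generic fibers just constructed, and for every finite unramified $K'/K$ with valuation ring $R'$,
\[
(\mathfrak{U}'_{\mathrm{sm}})_\eta(K')=\mathfrak{U}'_{\mathrm{sm}}(R')=\mathfrak{U}(R')=\mathfrak{U}_\eta(K')=U(K')=X(K'),
\]
where the outer equalities use the point-extension fact, the middle-left one is the smoothening property, and the last is boundedness. I expect the main obstacle to be the formal N\'eron smoothening itself: this is the Bosch--L\"utkebohmert--Raynaud inductive procedure that controls a suitable numerical measure of non-smoothness and strictly decreases it under well-chosen dilatations, now carried out in the formal-scheme setting; everything else (Raynaud models, the generic-fiber computations, and the bookkeeping with dilatations) is comparatively routine.
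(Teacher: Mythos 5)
Your proof is correct and follows essentially the same route as the paper: the paper handles the easy direction by the same quasi-compactness observation and delegates the converse entirely to \cite[3.3]{bosch-neron}, and your argument is precisely an unfolding of that Bosch--Schl\"oter proof (take a Raynaud model of a bounding quasi-compact open, apply the formal N\'eron smoothening by dilatations, and take the smooth locus).
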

\begin{proof}
Since the generic fiber of a formal $R$-scheme of finite type is
quasi-compact, it is clear that the existence of a weak N\'eron
model implies that $X$ is bounded. The converse implication is
\cite[3.3]{bosch-neron}.
\end{proof}
\begin{prop}
Let $X$ be a bounded quasi-separated smooth rigid $K$-variety, and
let $U$ be as in Definition \ref{def-bounded}. If $\X$ is a
regular formal $R$-model of $U$, then the $R$-smooth locus
$\Sm(\X)$ (endowed with the open immersion
$\Sm(\X)_\eta\hookrightarrow \X_\eta=U\hookrightarrow X$) is a
weak N\'eron model for $X$.
\end{prop}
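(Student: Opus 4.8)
The plan is to verify, for the pair consisting of $\Sm(\X)$ together with the morphism $\Sm(\X)_\eta \hookrightarrow X$, the three conditions defining a weak N\'eron model: that $\Sm(\X)$ is a smooth formal $R$-scheme of finite type, that $\Sm(\X)_\eta \to X$ is an open immersion, and that $\Sm(\X)_\eta(K') = X(K')$ for every finite unramified extension $K'/K$. The first two conditions are essentially formal. Indeed, $\Sm(\X)$ is by construction an open formal subscheme of the finite-type formal $R$-scheme $\X$, and it is $R$-smooth; and the generic fibre functor turns the open immersion $\Sm(\X)\hookrightarrow \X$ into an open immersion $\Sm(\X)_\eta \hookrightarrow \X_\eta = U$, which composed with the open immersion $U\hookrightarrow X$ from Definition \ref{def-bounded} gives the desired open immersion into $X$. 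Since $U(K') = X(K')$ for all finite unramified $K'/K$ by the defining property of $U$, and trivially $\Sm(\X)_\eta(K') \subseteq \X_\eta(K') = U(K')$, the third condition reduces to the inclusion $\X_\eta(K') \subseteq \Sm(\X)_\eta(K')$: every $K'$-point of $\X_\eta$ must factor through the $R$-smooth locus.

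To prove this, fix a finite unramified extension $K'/K$ with valuation ring $R'$; recall that $R'$ is a complete discrete valuation ring, that a uniformizer $\pi$ of $R$ remains a uniformizer of $R'$, and that the residue extension $k'/k$ is finite and separable. I would invoke the standard identification $\X_\eta(K') = \X(R')$ for quasi-compact formal $R$-schemes of finite type (see \cite{formrigI}), so that a point $x\in \X_\eta(K')$ is the same datum as a morphism $\psi\colon \Spf R' \to \X$ over $R$; let $\bar x\in \X_s$ be the image of the closed point of $\Spf R'$. It then suffices to show that $\X$ is smooth over $R$ at $\bar x$, for then $\bar x\in \Sm(\X)$, the morphism $\psi$ factors through the open formal subscheme $\Sm(\X)$, and passing to generic fibres shows $x\in \Sm(\X)_\eta(K')$. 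First I would note that $\bar x$ lies in the $R$-flat locus of $\X$: since $\Spf R'$ is connected, $\psi$ factors through the connected component $Z$ of $\X$ containing $\bar x$, which is integral (a connected regular scheme is normal, hence integral), and on $Z$ the element $\pi$ is nonzero because $\psi$ sends it to the uniformizer of $R'$; hence $\pi$ is a nonzerodivisor in the regular local ring $\mathcal{O}_{\X,\bar x}$ and $\mathcal{O}_{\X_s,\bar x} = \mathcal{O}_{\X,\bar x}/(\pi)$. As $\X_s$ is a $k$-scheme of finite type and $k$ is perfect, $\X$ will be smooth over $R$ at $\bar x$ as soon as $\mathcal{O}_{\X_s,\bar x}$ is a regular local ring.

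The crux is then a short local computation: $\mathcal{O}_{\X_s,\bar x} = \mathcal{O}_{\X,\bar x}/(\pi)$ is regular precisely when $\pi$ is not contained in $\mathfrak{m}^2$, where $\mathfrak{m}$ is the maximal ideal of the regular local ring $\mathcal{O}_{\X,\bar x}$ (equivalently, when $\pi$ is part of a regular system of parameters). The morphism $\psi$ induces a local homomorphism of local rings $\phi\colon \mathcal{O}_{\X,\bar x}\to R'$ over $R$, so $\phi(\mathfrak{m})\subseteq \mathfrak{m}_{R'}$ and hence $\phi(\mathfrak{m}^2)\subseteq \mathfrak{m}_{R'}^{\,2} = \pi^2 R'$. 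Here unramifiedness enters in an essential way: $\mathfrak{m}_{R'}$ is generated by $\pi$, so that $\phi(\pi)=\pi\notin \pi^2 R'$, and therefore $\pi\notin \mathfrak{m}^2$. Consequently $\mathcal{O}_{\X_s,\bar x}$ is regular, $\X$ is smooth over $R$ at $\bar x$, and the argument is complete.

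I expect the difficulties to be matters of bookkeeping rather than genuine obstacles: one must be at ease with the two standard inputs — the identification $\X_\eta(K') = \X(R')$ for quasi-compact formal $R$-schemes of finite type, and the criterion that an $R$-flat finite-type (formal) $R$-scheme is $R$-smooth at a point of its special fibre exactly when the special fibre is smooth there — and with the fact that the stalks of finite-type formal $R$-schemes behave like ordinary Noetherian local rings. The one place where the hypotheses really do the work is the implication ``$\psi$ unramified $\Rightarrow$ $\pi\notin\mathfrak{m}^2$'': regularity of the model furnishes a regular local ring at $\bar x$, and unramifiedness guarantees that $\psi$ sees $\pi$ as a uniformizer, which is exactly what forces $\pi$ to be a minimal generator of $\mathfrak{m}$. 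Without the unramifiedness hypothesis this conclusion genuinely fails, and smoothness can in general be recovered only after a further modification of the model.
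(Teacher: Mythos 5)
Your argument is correct and takes essentially the same approach as the paper: identify $\X_\eta(K')$ with $\X(R')$ via the specialization map and show that every $R'$-point of a regular formal $R$-model factors through $\Sm(\X)$ when $R'/R$ is unramified. The paper handles this last step by citing \cite[2.37]{NiSe-motint}, while you supply the proof of that cited fact directly (regularity of $\mathcal{O}_{\X,\bar x}$ plus the local section to $R'$ forcing $\pi\notin\mathfrak{m}^2$, hence regularity—and, $k$ being perfect, smoothness—of the special fibre at $\bar x$), which is exactly the mechanism behind the reference.
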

\begin{proof}
If $R'$ is a finite unramified extension of $R$, with quotient
field $K'$, then the specialization map $\X_\eta\to \X$ induces a
bijection $\X_\eta(K')=\X(R')$.
 Every $R'$-point on
$\X$ factors through $\Sm(\X)$, by \cite[2.37]{NiSe-motint}.
\end{proof}

A weak N\'eron model is far from unique, in general, as is
illustrated by the following example.

\begin{example}\label{ex-bounded}
Consider the open unit disc
$$B(0,1^{-})=\{z\in \Sp K\{x\}\,|\ |x(z)|<1
\}.$$ Let $\pi$ be a uniformizer in $R$ and $K'$
 a finite unramified extension of $K$. Then all $K'$-points in $B(0,1^{-})$ are contained in the closed disc
$$B(0,|\pi|)=\{z\in \Sp K\{x\}\,|\ |x(z)|\leq |\pi|\,\}$$
 because $|\pi|$ is the largest
element in the value group $|(K')^*|=|K^*|=|\pi|^{\Z}$ that is
strictly smaller than one. It follows that $B(0,1^{-})$ is
bounded, and that $\X=\Spf R\{u\}$ is a weak N\'eron model for
$B(0,1^{-})$ with respect to the open immersion
$$\X_\eta=\Sp K\{u\}\to B(0,1^{-})$$ defined by $x\mapsto \pi^{-1}u.$
This weak N\'eron model is not unique: one could also remark that
all the unramified points in $B(0,1^{-})$ lie on the union of the
circle $|x(z)|=|\pi|$ and the closed disc $B(0,|\pi|^2)$. In this
way, we get a weak N\'eron model $\X'$ that is the disjoint union
of $\Spf R\{u\}$ and $\Spf R\{v,v^{-1}\}$. Note that $\X'$ can be
obtained by blowing up $\X$ at the origin of $\X_s$ and taking the
$R$-smooth locus.


The open annulus
$$A(0;0^{+},1^{-})=\{z\in \Sp K\{x\}\,|\,0<|x(z)|<1
\}.$$ is not bounded, since $K$-points can lie arbitrarily close
to zero.
\end{example}

Let $X$ be a smooth rigid $K$-variety of pure dimension $m$, and
assume that $X$ admits a weak N\'eron model $\X$. Let $\omega$ be
a gauge form on $X$, i.e., a nowhere vanishing differential form
of maximal degree. Then for every connected component $C$ of
$\X_s$, we can consider the order $\ord_C\omega$ of $\omega$ along
$C$. It is the unique integer $\gamma$ such that
$\pi^{-\gamma}\omega$ extends to a generator of $\Omega^m_{\X/R}$
at the generic point of $C$. In geometric terms, it is the order
of the zero or minus the order of the pole of the form $\omega$
along $C$.

\begin{theorem-definition}[Loeser-Sebag]
Let $X$ be a separated, smooth and bounded rigid $K$-variety of
pure dimension $m$, and let $\X$ be a weak N\'eron model for $X$.
Let $\omega$ be a gauge form on $X$. Then the expression
\begin{equation}\label{eq-motint}
\int_{X}|\omega|:=\LL^{-m}\sum_{C\in
\pi_0(\X_s)}[C]\LL^{-\ord_C\omega}\quad \in
\mathcal{M}^R_k\end{equation} only depends on $(X,\omega)$, and
not on the choice of weak N\'eron model $\X$. We call it the {\em
motivic integral} or {\em motivic volume} of $\omega$ on $X$.
\end{theorem-definition}
\begin{proof}
This is a slight generalization of the result in
\cite[4.3.1]{motrigid}. A proof can be found in
\cite[2.3]{HaNi-zeta}.
\end{proof}

 In this way, we can measure the space of
 unramified points on a bounded separated smooth rigid $K$-variety
 $X$ with respect to a motivic measure defined by a gauge form
 $\omega$ on $X$. Intuitively, one can view the set of unramified
 points on $X$ as a family of open balls parameterized by the special
 fiber of a weak N\'eron model. The gauge form $\omega$
 renormalizes the volume of each ball in such a way that the total
 volume of the family is independent of the chosen model.
  We refer to \cite{NiSe-motint} for more background and
further results.

\subsection{The algebraic case}
One can also define the notion of weak N\'eron model in the
algebraic setting. Let $K^s$ be a separable closure of $K$. Denote
by $R^{\sh}$ the strict henselization of $R$ in $K^s$, and by
$K^{\sh}$ its quotient field. The residue field $k^s$ of $R^{\sh}$
is a separable closure of $k$.

\begin{definition}
Let $X$ be a smooth algebraic $K$-variety. A weak N\'eron model is
a smooth $R$-variety $\X$ endowed with an isomorphism
$$\X\times_R K\to X$$ such that the natural map
\begin{equation}\label{eq-weakneralg}
\X(R^{\sh})\to \X(K^{\sh})=X(K^{\sh})\end{equation} is a
bijection.
\end{definition}
Note that any $k^s$-point on $\X_s$ lifts to an $R^{\sh}$-point on
$\X$, because $\X$ is smooth and $R^{\sh}$ is henselian. Thus
$\X_s$ is empty if and only if $X(K^{\sh})$ is empty.

\begin{remark}
 Since $R^{\sh}$ is the direct limit of all finite unramified
 extensions of $R$ inside $K^{s}$, and $\X$ is of finite type over $R$, we have
 that \eqref{eq-weakneralg} is a bijection if and only if
 $\X(R')\to X(K')$ is a bijection for every finite unramified
 extension $R'$ of $R$. Here $K'$ denotes the quotient field of
 $R'$.
 \end{remark}

\begin{prop}\label{prop-comparweakner}
Let $X$ be a smooth algebraic $K$-variety. Then $X$ admits a weak
N\'eron model $\X$ iff the rigid analytification $X^{\rig}$ admits
a weak N\'eron model, i.e., iff $X^{\rig}$ is bounded. In that
case, the formal $\mathfrak{m}$-adic completion of $\X$ is a weak
N\'eron model for $X^{\rig}$.
\end{prop}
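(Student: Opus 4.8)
The plan is to prove the equivalence by establishing two implications, from which the ``in that case'' clause will also follow. Since $X^{\rig}$ is a separated --- hence quasi-separated --- smooth rigid $K$-variety, the Bosch-Schl\"oter theorem recalled above shows that $X^{\rig}$ admits a weak N\'eron model if and only if it is bounded, so it is enough to prove: (i) if $X$ admits an algebraic weak N\'eron model $\X$, then the formal $\mathfrak{m}$-adic completion $\widehat{\X}$ is a weak N\'eron model for $X^{\rig}$ (so in particular $X^{\rig}$ is bounded); and (ii) if $X^{\rig}$ is bounded, then $X$ admits an algebraic weak N\'eron model.

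For (i) I would argue as follows. Since $\X$ is a smooth separated $R$-scheme of finite type, its $\mathfrak{m}$-adic completion $\widehat{\X}$ is a smooth separated formal $R$-scheme of finite type, and there is a canonical open immersion of generic fibres $\widehat{\X}_\eta\hookrightarrow(\X\times_R K)^{\rig}=X^{\rig}$ (see \cite{formrigI}). Let $K'/K$ be a finite unramified extension with valuation ring $R'$. The specialization map identifies $\widehat{\X}_\eta(K')$ with $\widehat{\X}(R')$ (as recalled above), one has $\widehat{\X}(R')=\X(R')$ because $\X$ is of finite type over $R$ and $R'$ is $\mathfrak{m}$-adically complete, and $\X(R')=X(K')=X^{\rig}(K')$ because $\X$ is a weak N\'eron model (cf.\ the Remark above). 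Hence $\widehat{\X}_\eta(K')=X^{\rig}(K')$ for every such $K'$, which together with the open immersion above says precisely that $\widehat{\X}$ is a weak N\'eron model for $X^{\rig}$.

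For (ii) the idea is to first build an algebraic, possibly singular, finite-type $R$-model of $X$ that already captures all unramified points, and then to smoothen it. Assume $X^{\rig}$ bounded and fix a quasi-compact open $U\subseteq X^{\rig}$ with $U(K')=X^{\rig}(K')=X(K')$ for all finite unramified $K'/K$. Choose a Nagata compactification $X\hookrightarrow\overline X$ over $K$ and a flat proper $R$-model $\overline{\X}$ of $\overline X$ (obtained by spreading out, Nagata compactification over $R$, and taking schematic closures). Because $\overline{\X}$ is proper, $\widehat{\overline{\X}}_\eta=\overline X^{\rig}$, and, by Grothendieck's existence theorem, the admissible formal blow-ups of $\widehat{\overline{\X}}$ are exactly the completions of blow-ups of $\overline{\X}$ along ideals containing a power of $\mathfrak{m}$. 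Using Raynaud's correspondence between quasi-compact opens of $\overline X^{\rig}$ and open subschemes of admissible formal blow-ups of $\widehat{\overline{\X}}$, together with his flattening and blow-up techniques (see \cite{raynaud}, \cite{formrigI}), I would produce a blow-up $\overline{\X}'\to\overline{\X}$ and an open subscheme $\X_1\subseteq\overline{\X}'$ with $(\X_1)_K=X$ and $\widehat{\X_1}_\eta\supseteq U$. Then $\X_1$ is a separated finite-type $R$-model of $X$, and running the computation of (i) backwards --- now using $U\subseteq\widehat{\X_1}_\eta\subseteq X^{\rig}$ --- gives $\X_1(R')=\widehat{\X_1}_\eta(K')=X(K')$ for every finite unramified $R'$. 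Finally, N\'eron's smoothening process (\cite{bosch-neron}; cf.\ \cite{NiSe-motint}) yields a morphism $\mY\to\X_1$ that is a finite composition of blow-ups with centres in the non-smooth locus of the special fibre, is an isomorphism over $K$, and satisfies $\Sm(\mY)(R')=\X_1(R')$ for every finite unramified $R'$; then $\Sm(\mY)$ is a smooth $R$-model of $X$ with $\Sm(\mY)(R')=X(K')$ for all such $R'$, i.e.\ an algebraic weak N\'eron model of $X$.

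The main obstacle is the construction of $\X_1$ in step (ii): one must pass from the purely rigid-analytic boundedness of $X^{\rig}$ to an algebraic, finite-type $R$-model whose generic fibre is \emph{exactly} $X$ and which still sees every unramified point, and one cannot simply pull $U$ back along an arbitrary model, because $U$ remembers only the unramified points and not the boundary of $X$ in $\overline X$. This is where Raynaud's dictionary between quasi-compact opens and admissible formal blow-ups, and his flattening theorems, enter, and it is essential that the ambient model $\overline{\X}$ be proper so that the relevant formal modifications can be algebraized via Grothendieck's existence theorem. Once $\X_1$ is in hand, the smoothening step is the classical N\'eron argument: since everything is of finite type over the complete --- hence excellent --- discrete valuation ring $R$, it terminates after finitely many blow-ups and needs no resolution of singularities.
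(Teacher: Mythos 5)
The paper's own ``proof'' is simply a citation to \cite[3.15, 4.3, 4.9]{Ni-tracevar}, so there is no in-text argument to compare against; your reconstruction is a reasonable unpacking of what such a proof must contain, and the overall architecture (algebraic-to-formal comparison via completion, then compactification, Raynaud's theory of formal models, algebraization by Grothendieck's existence theorem, and N\'eron smoothening) is the standard and correct one. Step (i) is complete and correct: the chain $\widehat{\X}_\eta(K')=\widehat{\X}(R')=\X(R')=X(K')=X^{\rig}(K')$, together with the open immersion $\widehat{\X}_\eta\hookrightarrow X^{\rig}$, is exactly what is needed.

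However, step (ii) contains one assertion that is doing most of the work and is not actually justified: that Raynaud's dictionary produces an open $\X_1\subseteq\overline{\X}'$ satisfying simultaneously $(\X_1)_K=X$ and $\widehat{\X_1}_\eta\supseteq U$. Raynaud's correspondence hands you a formal open $\mU$ with $\mU_\eta=U$, equivalently an open subscheme of $\overline{\X}'$ with prescribed special fibre $\mU_s$; but the generic fibre of such an open is not determined by $\mU_s$ (the maximal open with that special fibre has generic fibre all of $\overline X$), so there is a real compatibility question. What makes it work is that the closure $Z'$ of $\overline X\setminus X$ in $\overline{\X}'$ has $\mU_s\cap Z'_s=\emptyset$: since $Z'$ is $R$-flat, proper, and finite type, every closed point of $Z'_s$ is the specialization of a rigid point of $Z'^{\rig}_K=(\overline X\setminus X)^{\rig}$, and such a point would lie in the tube $\mU_\eta=U\subseteq X^{\rig}$, which is disjoint from $(\overline X\setminus X)^{\rig}$ --- contradiction. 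Only once this disjointness is in hand can you take $\X_1=\overline{\X}'\setminus Z'$, which then has $(\X_1)_K=X$ and $(\X_1)_s\supseteq\mU_s$, so $\widehat{\X_1}_\eta\supseteq U$. You correctly identify this as ``the main obstacle'', but you do not resolve it; the sentence ``I would produce \ldots'' is precisely where the boundedness hypothesis must actually be used, and without the disjointness argument the construction of $\X_1$ does not go through.
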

\begin{proof}
This follows from \cite[3.15, 4.3 and 4.9]{Ni-tracevar}.
\end{proof}

In particular, if $X$ is proper over $K$, then $X^{\rig}$ is
quasi-compact, so that $X$ admits a weak N\'eron model.

If $X$ is a smooth $K$-variety with weak N\'eron model $\X$, and
$\omega$ is a gauge form on $X$, then one can define the order
$\ord_C\omega$ of $\omega$ along a connected component $C$ of
$\X_s$ exactly as in the formal-rigid case.

\begin{definition}
Let $X$ be a smooth algebraic $K$-variety of pure dimension such
that the rigid analytification $X^{\rig}$ of $X$ is bounded. Let
$\omega$ be a gauge form on $X$, and denote by $\omega^{\rig}$ the
induced gauge form on $X^{\rig}$. Then we set
$$\int_{X}|\omega|=\int_{X^{\rig}}|\omega^{\rig}|\quad \in \mathcal{M}^R_k.$$
\end{definition}
 By Proposition \ref{prop-comparweakner}, the
motivic integral of $\omega$ on $X$ can also be computed on a weak
N\'eron model of $X$:

\begin{prop}\label{prop-motintalg}
Let $X$ be a smooth algebraic $K$-variety of pure dimension $m$,
and assume that $X$ admits a weak N\'eron model $\X$. For every
gauge form $\omega$ on $X$, we have
$$\int_{X}|\omega|=\LL^{-m}\sum_{C\in \pi_0(\X_s)}[C]\LL^{-\ord_C\omega}$$
in $\mathcal{M}_k^R$.
\end{prop}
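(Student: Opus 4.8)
The plan is to deduce the statement from the formal--rigid case, i.e.\ from the Loeser--Sebag Theorem-Definition, via the comparison between algebraic and formal-rigid weak N\'eron models recorded in Proposition \ref{prop-comparweakner}. Once we know that the $\mathfrak{m}$-adic completion of $\X$ is a weak N\'eron model for $X^{\rig}$, the only thing left to verify is that passing to this completion changes neither the special fiber nor the order of the gauge form along each of its components.

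First I would unwind the definition: by construction $\int_X|\omega|=\int_{X^{\rig}}|\omega^{\rig}|$, where $X^{\rig}$ is smooth of pure dimension $m$, separated (because the variety $X$ is separated), and bounded (because $X$ admits a weak N\'eron model, which is part of Proposition \ref{prop-comparweakner}), so the motivic integral on the right is defined. Next, Proposition \ref{prop-comparweakner} tells us that the formal $\mathfrak{m}$-adic completion $\widehat{\X}$ of $\X$ is a weak N\'eron model for $X^{\rig}$; it is a separated formal $R$-scheme of finite type, hence smooth with quasi-compact generic fiber, so the Loeser--Sebag Theorem-Definition applies and yields
\[
\int_{X^{\rig}}|\omega^{\rig}|=\LL^{-m}\sum_{C\in\pi_0(\widehat{\X}_s)}[C]\,\LL^{-\ord_C\omega^{\rig}}\quad\in\mathcal{M}^R_k .
\]
It then remains to match this with the right-hand side of the proposition. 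Since $\pi$-adic completion leaves the closed fiber untouched, $\widehat{\X}_s=\X_s$ as $k$-schemes, so $\pi_0(\widehat{\X}_s)=\pi_0(\X_s)$ and the classes $[C]$ coincide. Finally, for each component $C$ with generic point $\xi$, one checks $\ord_C\omega^{\rig}=\ord_C\omega$: since $\X$ is $R$-smooth it is regular, $\mathcal{O}_{\X,\xi}$ is a discrete valuation ring with uniformizer $\pi$, and the canonical local homomorphism $\mathcal{O}_{\X,\xi}\to\mathcal{O}_{\widehat{\X},\xi}$ to the corresponding local ring of the completion is flat, local, and again has $\pi$ as a uniformizer, hence preserves the valuation; as $\Omega^m_{\widehat{\X}/R}$ is the completion of $\Omega^m_{\X/R}$, any local generator of $\Omega^m_{\X/R}$ at $\xi$ remains a generator on $\widehat{\X}$, so computing the order of $\omega$ on either model returns the same integer. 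Substituting into the displayed formula gives the claim.

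The only genuine point is this last compatibility of orders; everything else is bookkeeping around Proposition \ref{prop-comparweakner} and the Loeser--Sebag formula. I expect it to be the (mild) main obstacle, and if one prefers to avoid the hands-on verification one can instead observe that $\mathcal{O}_{\X,\xi}$ and $\mathcal{O}_{\widehat{\X},\xi}$ share a common completion---the completed local ring of $\X$ at $\xi$---which already pins down $\ord_C$, so the invariance is automatic.
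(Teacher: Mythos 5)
Your proposal is correct and follows precisely the route the paper intends: reduce to the formal--rigid Loeser--Sebag formula via Proposition \ref{prop-comparweakner}, then observe that $\mathfrak{m}$-adic completion does not change the special fiber or the order of $\omega$ along its components. The paper itself states no proof beyond the single remark ``By Proposition \ref{prop-comparweakner}\dots''; your fleshing out of the order-compatibility at generic points of components (via the common completed local ring) is exactly the implicit step the authors are relying on.
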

\subsection{The analytic Milnor fiber}\label{subsec-milnor}
Let $k$ be any field, and set $R=k[[t]]$ and $K=k((t))$. We fix a
$t$-adic absolute value on $K$ by choosing a value $|t|\in
\,]0,1[$.

Let $X$ be a $k$-variety, endowed with a flat morphism
$$f:X\to \A^1_k=\Spec k[t].$$ Let $x$ be a closed point of the
special fiber $X_s=f^{-1}(0)$ of $f$. Taking the completion of $f$
at the point $x$, we obtain a morphism of formal schemes
\begin{equation}\label{eq-completion}
\widehat{f}_x:\Spf \widehat{\mathcal{O}}_{X,x}\to \Spf
R.\end{equation} We consider the generic fiber $\mathscr{F}_x$ of
$\widehat{f}_x$ in the sense of Berthelot \cite{bert}. This is a
separated rigid variety over the non-archimedean field $K$. It is
bounded, by \cite[5.8]{NiSe-weilres}. If $f$ is generically smooth
(e.g., if $k$ has characteristic zero and $X\setminus X_s$ is
regular) then $\mathscr{F}_x$ is smooth over $K$.

\begin{definition}
We call $\mathscr{F}_x$ the analytic Milnor fiber of $f$ at the
point $x$.
\end{definition}

Note that the construction of the analytic Milnor fiber is a
non-archimedean analog of the definition of the classical Milnor
fibration in complex singularity theory. For an explicit
dictionary, see \cite[6.1]{NiSe-motint}.

\begin{example}
Assume that $x$ is $k$-rational and that $X$ is smooth over $k$ at
$x$. By \cite[19.6.4]{ega4.1}, there exists an isomorphism of
$k$-algebras
$$\widehat{\mathcal{O}}_{X,x}\cong k[[x_1,\ldots,x_n]]$$ where $n$
is the dimension of $X$ at $x$. Viewing $f$ as an element of
$k[[x_1,\ldots,x_n]]$, it defines an analytic function on the open
unit polydisc
$$B^{n}(0,1^{-})=\{z\in \Sp K\{x_1,\ldots,x_n\}\,|\ |x_i(z)|<1
\mbox{ for all }i\}.$$ The analytic Milnor fiber $\mathscr{F}_x$
is the closed subvariety of $B^{n}(0,1^{-})$ defined by the
equation $f=t$. Note that, for every finite unramified extension
$K'$ of $K$, the $K'$-points on $\mathscr{F}_x$ are all contained
in the closed polydisc
$$B^{n}(0,|\pi|)=\{z\in \Sp K\{x_1,\ldots,x_n\}\,|\ |x_i(z)|\leq
|t| \mbox{ for all }i\}$$ by the same argument as in Example
\ref{ex-bounded}. This shows that $\mathscr{F}_x$ is bounded.
\end{example}

The following result follows immediately from \cite[1.3 and
3.5]{berk-vanish2}.
\begin{theorem}[Berkovich]
Assume that $k$ is algebraically closed. Let $\ell$ be a prime
invertible in $k$, and denote by $K^s$ a separable closure of $K$.
Then for every integer $i\geq 0$, there exists a canonical
$G(K^s/K)$-equivariant isomorphism
\begin{equation}\label{eq-comp}H^i(\mathscr{F}_x\widehat{\times}_K
\widehat{K^s},\Q_\ell)\cong R^i\psi_f(\Q_\ell)_x.\end{equation}
\end{theorem}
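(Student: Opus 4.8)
The plan is to deduce the statement from Berkovich's theory of vanishing cycles for formal schemes, by realizing $\mathscr{F}_x$ as a fibre of a specialization map.

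First I would base change $f$ along $k[t]\to R=k[[t]]$ to obtain a flat $R$-scheme of finite type $\mathscr{X}=X\times_{k[t]}R$, with $\mathscr{X}_s=X_s$, and I would let $\mathfrak{X}$ be the formal $\mathfrak{m}$-adic completion of $\mathscr{X}$ along $\mathscr{X}_s$. Then $\mathfrak{X}$ is a formal $R$-scheme of finite type, $\mathfrak{X}_s=X_s$, and its generic fibre $\mathfrak{X}_\eta$ is a quasi-compact rigid (equivalently, Berkovich analytic) $K$-variety equipped with a specialization morphism $\mathrm{sp}\colon\mathfrak{X}_\eta\to\mathfrak{X}_s$. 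The crucial observation, which is essentially the content of Berthelot's construction of $\mathscr{F}_x$, is that the completed local ring of $\mathfrak{X}$ at the closed point $x$ is canonically $\widehat{\mathcal{O}}_{X,x}$; hence the ``fibre'' of $\mathrm{sp}$ over $x$ --- the Berthelot generic fibre of $\Spf\widehat{\mathcal{O}}_{\mathfrak{X},x}$ --- is canonically isomorphic over $K$ to the analytic Milnor fibre $\mathscr{F}_x$.

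Next, for each $i\geq 0$ I would invoke Berkovich's local description of the vanishing cycle sheaves \cite[1.3]{berk-vanish2}: the stalk of $R^i\Psi_\eta(\mathfrak{X},\Q_\ell)$ at the geometric point $x$ of $X_s$ is canonically and $G(K^s/K)$-equivariantly isomorphic to $H^i$ of the geometric generic fibre of the formal completion of $\mathfrak{X}$ at $x$, that is, to $H^i(\mathscr{F}_x\widehat{\times}_K\widehat{K^s},\Q_\ell)$. Then I would apply Berkovich's comparison theorem \cite[3.5]{berk-vanish2}: since $\mathfrak{X}$ is the formal completion of the finite-type $R$-scheme $\mathscr{X}$ along its special fibre, there is a canonical $G(K^s/K)$-equivariant isomorphism, in the bounded derived category of constructible $\Q_\ell$-sheaves on $X_s$, between $R\Psi_\eta(\mathfrak{X},\Q_\ell)$ and the classical nearby cycle complex $R\psi_f(\Q_\ell)$ of $\mathscr{X}\to\Spec R$. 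Taking the $i$-th cohomology sheaf of this isomorphism, passing to stalks at $x$, and composing with the isomorphism from \cite[1.3]{berk-vanish2} gives the asserted canonical $G(K^s/K)$-equivariant isomorphism $H^i(\mathscr{F}_x\widehat{\times}_K\widehat{K^s},\Q_\ell)\cong R^i\psi_f(\Q_\ell)_x$.

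The proof is thus essentially bookkeeping, and the point that needs care is matching up the three avatars of the generic fibre that intervene --- the Berthelot generic fibre $\mathscr{F}_x$ used in the definition, the fibre of $\mathrm{sp}$ over $x$, and the local analytic space appearing in \cite[1.3]{berk-vanish2} --- together with the compatibility of their $G(K^s/K)$-actions; all of this is contained in the cited results once the definitions are unwound. One minor technical point is that $\Spf\widehat{\mathcal{O}}_{X,x}$ is a \emph{special} rather than topologically finite-type formal $R$-scheme, but since $x$ is a closed point its special fibre consists of a single point and $\widehat{\mathcal{O}}_{X,x}$ is already complete, so no limit over completions is required; moreover $k=\bar k$ gives $k(x)=k$, so there is no residue-field extension to keep track of. Hence the statement follows immediately from the two cited results.
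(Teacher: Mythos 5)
Your proposal is correct and follows exactly the route the paper itself indicates: the paper's entire proof is the remark that the statement ``follows immediately from \cite[1.3 and 3.5]{berk-vanish2},'' and your argument is precisely the unwinding of those two results (Berkovich's local description of the vanishing-cycle stalks of a formal scheme, and his comparison theorem with algebraic nearby cycles), together with the identification of $\mathscr{F}_x$ as the fibre of the specialization map. No discrepancy between your approach and the paper's.
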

In the left hand side of \eqref{eq-comp}, we take Berkovich's
\'etale cohomology for $K$-analytic spaces \cite{berk-etale}. In
the right hand side, $R\psi_f(\Q_\ell)$ denotes the complex of
\'etale $\ell$-adic nearby cycles associated to $f$. If $k=\C$,
then by Deligne's comparison theorem \cite[Exp.XIV]{sga7b}, there
exists a canonical isomorphism
$$R^i\psi_f(\Q_\ell)_x\cong R^i\psi_{f^{\mathrm{an}}}(\Q_\ell)_x$$
 where $f^{\mathrm{an}}:X^{\mathrm{an}}\to \C$ is the complex
 analytification of $f$ and $\psi_{f^{\mathrm{an}}}$ is the
 complex analytic nearby cycles functor. Under this isomorphism,
 the action of the canonical generator of
 $G(K^s/K)=\mu(\C)$ on $R^i\psi_f(\Q_\ell)_x$ corresponds to the monodromy
 transformation on $R^i\psi_{f^{\mathrm{an}}}(\Q_\ell)_x$. This
 means that we can read the local monodromy eigenvalues of $f^{\mathrm{an}}$ at
 $x$ from the \'etale cohomology of the analytic Milnor fiber.

The following proposition shows that the analytic Milnor fiber
$\mathscr{F}_x$ completely determines the formal germ
$\widehat{f}_x$ of $f$ at $x$, if $X$ is normal at $x$. We denote
by $\mathcal{O}(\mathscr{F}_x)$ the $K$-algebra of analytic
functions on $\mathscr{F}_x$.

\begin{prop}[de Jong \cite{dJ}, Prop. 7.4.1; see also \cite{Ni-trace}, Prop.
8.8]\label{prop-deJong} If $X$ is normal at $x$, then there exists
a natural isomorphism of $R$-algebras
$$\widehat{\mathcal{O}}_{X,x}\cong \{h\in
\mathcal{O}(\mathscr{F}_x)\,|\ |h(z)|\leq 1\mbox{ for all }z\in
\mathscr{F}_x\}.$$
\end{prop}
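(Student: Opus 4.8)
The plan is to identify $\widehat{\mathcal{O}}_{X,x}$ with the ring of functions on $\mathscr{F}_x$ bounded by $1$, by exhausting $\mathscr{F}_x$ by quasi-compact affinoid tubes around the closed point, computing the bounded functions on each tube from a normal formal model, and passing to an inverse limit. Write $A=\widehat{\mathcal{O}}_{X,x}$, let $\mathfrak{m}$ be its maximal ideal and $t\in A$ the image of a uniformiser of $R$. Since $X$ is normal at $x$, the complete local ring $A$ is a normal noetherian domain, and it is excellent; because $f$ is flat, $t$ is a nonzerodivisor, so $R\hookrightarrow A$. Put $A'=\{h\in\mathcal{O}(\mathscr{F}_x)\mid |h(z)|\le 1\text{ for all }z\in\mathscr{F}_x\}$.

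First I would treat the inclusion $A\subseteq A'$. Functoriality of Berthelot's generic fibre applied to $\widehat{f}_x$ yields an $R$-algebra homomorphism $A\to\mathcal{O}(\mathscr{F}_x)$, which is the map asserted in the statement. The generators of $\mathfrak{m}$ are topologically nilpotent on every quasi-compact open of $\mathscr{F}_x$, so the image of $A$ consists of functions bounded by $1$; thus $A$ maps into $A'$. Injectivity is elementary: $A$ is a domain with $t$ a nonzerodivisor, $t$ is a unit in the $K$-algebra $\mathcal{O}(\mathscr{F}_x)$, and $\mathscr{F}_x\neq\emptyset$, so the map extends to $A[1/t]=A\otimes_R K\hookrightarrow\mathcal{O}(\mathscr{F}_x)$.

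The substance is the reverse inclusion $A'\subseteq A$, and this is where normality enters. Exhaust $\mathscr{F}_x=\bigcup_{c\ge1}\mathscr{F}_x^{(c)}$ by the quasi-compact tubes $\mathscr{F}_x^{(c)}=\{z\in\mathscr{F}_x\mid|h(z)|\le|t|\text{ for all }h\in\mathfrak{m}^c\}$, each an affinoid subdomain. Since $A$, hence $\mathscr{F}_x$ and each $\mathscr{F}_x^{(c)}$, is reduced, the tube $\mathscr{F}_x^{(c)}$ carries a normal flat quasi-compact formal $R$-model $\mathfrak{Y}_c$ — the normalisation of the standard model $\Spf A\langle\mathfrak{m}^c/t\rangle$ attached to the blow-up of $(t)+\mathfrak{m}^c$ on $\Spec A$. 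By the comparison for normal quasi-compact formal schemes (global sections equal the power-bounded functions on the Raynaud generic fibre), $\mathcal{O}(\mathfrak{Y}_c)$ is exactly the ring of functions on $\mathscr{F}_x^{(c)}$ bounded by $1$; restriction from larger to smaller tubes gives a projective system with $A'=\varprojlim_c\mathcal{O}(\mathfrak{Y}_c)$. It remains to show that this limit equals $A$, i.e.\ that the normalised tube models assemble, in the limit, to $\Spf A$. For this it suffices to establish: (i) $A'$ is integral over $A$ and module-finite over it — a finiteness statement coming from the tube models (whose normalisations $\mathfrak{Y}_c$ are finite since everything in sight is topologically of finite type over the excellent ring $R$) together with a limit argument; and (ii) $\Spec A'\to\Spec A$ is birational, i.e.\ $A'\subseteq\operatorname{Frac}(A)$, equivalently a function on $\mathscr{F}_x$ bounded by $1$ is meromorphic along the special fibre. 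Granting (i) and (ii), $A'$ is a module-finite, integral extension of $A$ contained in $\operatorname{Frac}(A)$; since $A$ is normal, $A'=A$. Tracing through the construction shows the resulting isomorphism is the $R$-algebra map of the previous step.

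I expect (ii) to be the main obstacle: one must rule out that a bounded analytic function on the \emph{non}-quasi-compact tube $\mathscr{F}_x$ is a genuinely new, transcendental function, and instead show it is already meromorphic with respect to $A$ — this is the step that genuinely uses that $\mathscr{F}_x$ is a tubular neighbourhood rather than an arbitrary space, so that boundedness on all of it forces meromorphy. Normality of $A$ is indispensable here, since for a non-normal germ the same construction returns the completed normalisation of $A$ rather than $A$ itself (as one already sees for a singular curve germ); and all the auxiliary inputs — finiteness of the normalisations $\mathfrak{Y}_c$, reducedness of $\mathscr{F}_x$, and the identification of $\mathcal{O}(\mathfrak{Y}_c)$ with the power-bounded functions on each tube — rely on $A$ being an excellent normal ring, which holds because $A$ is a complete noetherian local ring.
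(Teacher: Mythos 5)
The paper does not supply its own proof here: the proposition is quoted directly from de~Jong \cite{dJ}, Prop.~7.4.1 (see also \cite{Ni-trace}, Prop.~8.8), and the text simply cites these sources. So the comparison must be against de~Jong's argument. Your outline — exhaust $\mathscr{F}_x$ by quasi-compact affinoid tubes, identify the power-bounded functions on each tube with global sections of a normal flat formal model, pass to the inverse limit, and then use that $A=\widehat{\mathcal{O}}_{X,x}$ is normal to collapse the limit to $A$ — is indeed the shape of de~Jong's proof, and you correctly locate where normality is indispensable: without it one only recovers the integral closure of $A$, exactly as you note for a singular curve germ.

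The proposal nevertheless has a genuine gap, and you have yourself flagged it. Your item (ii) — that any $h\in A'$ is meromorphic with respect to $A$, i.e.\ lies in $\operatorname{Frac}(A)$ — is the entire content of the proposition once (i) is in place, and you offer no argument for it. Moreover, the logical order between (i) and (ii) is the reverse of what you write: until you know $A'\subseteq\operatorname{Frac}(A)$, there is no reason for $A'=\varprojlim_c\mathcal{O}(\mathfrak{Y}_c)$ to be an integral (let alone module-finite) extension of $A$. Each $\mathcal{O}(\mathfrak{Y}_c)$ is finite over $A\langle\mathfrak{m}^c/t\rangle$, but that ring is emphatically not finite over $A$, and an inverse limit of such rings has no finiteness over $A$ without further input; the ``limit argument'' invoked in (i) is thus assuming the conclusion. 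The correct order is: first establish (ii) by showing that a power-bounded section is meromorphic along the special fibre (this is the one step that really uses that $\mathscr{F}_x$ is a Berthelot tube rather than an arbitrary quasi-Stein space, and it needs a separate argument, e.g.\ via approximation on the tubes and noetherianity of $A$); then integrality of $h\in A'\cap\operatorname{Frac}(A)$ over $A$ follows almost immediately since $A$ is a normal noetherian (hence Krull) domain and boundedness of $|h|$ on $\mathscr{F}_x$ forces $h$ to lie in every localisation $A_{\mathfrak{p}}$ at a height-one prime; normality then gives $h\in A$, with no module-finiteness needed. As written, the proposal is a plausible reconstruction of the known proof's architecture, but it does not prove the statement.
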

There is an interesting relation between the Berkovich topology on
$\mathscr{F}_x$ and the limit mixed Hodge structure on the nearby
cohomology of $f$ at $x$; see \cite{Ni-sing}.

\subsection{The Gelfand-Leray form}\label{subsec-gelfandleray} We keep the notations of
Section \ref{subsec-milnor}, and we assume that $k$ has
characteristic zero and that $X$ is smooth over $k$ at the point
$x$. Replacing $X$ by an open neighbourhood of $x$, we can assume
that $f$ is smooth on $X^o=X\setminus X_s$, of relative dimension
$m$, and that $\Omega^{m+1}_{X^o/k}$ is free, i.e., that $X$ admits
a gauge form $\phi$. Then the exact complex
$$\begin{CD}\Omega^{m-1}_{X^o/k}@>\wedge df>> \Omega^{m}_{X^o/k}@>\wedge df>> \Omega^{m+1}_{X^o/k}@>>> 0 \end{CD}$$
induces an isomorphism of sheaves
$$\Omega^m_{X^o/\A^1_k}\to \Omega^{m+1}_{X^o/k}$$ and thus an
isomorphism of $\mathcal{O}(X^o)$-modules
$$\Omega^m_{X^o/\A^1_k}(X^o)\to \Omega^{m+1}_{X^o/k}(X^o).$$ The
inverse image in $\Omega^m_{X^o/\A^1_k}(X^o)$ of the restriction
of $\phi$ to $X^o$ is called the {\em Gelfand-Leray form} on $X^o$
associated to $f$ and $\phi$. It is denoted by $\phi/df$. It
induces a gauge form on the analytic Milnor fiber $\mathscr{F}_x$,
since $\mathscr{F}_x$ is an open rigid sub-$K$-variety of the
rigid analytification of $X\times_{k[t]}K$ \cite[0.2.7 and
0.3.5]{bert}. We denote this gauge form again by $\phi/df$. It can
be constructed intrinsically on $\mathscr{F}_x$; see Proposition
\ref{prop-deJong} and \cite[\S\,7.3]{Ni-trace}.
\subsection{The motivic zeta function and the motivic monodromy conjecture}\label{subsec-motmoncon} We keep the notations of
Section \ref{subsec-milnor}. We assume that $k$ has characteristic
zero, and that $X$ is smooth at $x$, of dimension $n$. For
simplicity, we suppose that $x$ is $k$-rational. Let $\phi$ be a
gauge form on some open neighbourhood of $x$ in $X$, and consider
the Gelfand-Leray form $\phi/df$ on $\mathscr{F}_x$ constructed in
Section \ref{subsec-gelfandleray}. We set $\omega=t\cdot \phi/df$.
This is a gauge form on $\mathscr{F}_x$. For every integer $d>0$,
we set $K(d)=k((\sqrt[d]{t}))$. This is a totally ramified
extension of $K$ of degree $d$. We set
$\mathscr{F}_x(d)=\mathscr{F}_x\times_K K(d)$. For every
differential form $\omega'$ on $\mathscr{F}_x$, we denote by
$\omega'(d)$ the pullback of $\omega'$ to $\mathscr{F}_x(d)$.

 We denote by $Z_{f,x}(T)\in \mathcal{M}_k[[T]]$ Denef and Loeser's local motivic zeta
 function of $f$ at the point $x$ (obtained from the zeta function in \cite[3.2.1]{DL-geom} by taking the fiber at $x$ and forgetting the
 $\mu$-action).
The reader who is unfamiliar with Denef and Loeser's definition
may take the following theorem as a definition.
\begin{theorem}[Nicaise-Sebag]\label{thm-compar}
We have \begin{equation}\label{eq-comparzeta}
Z_{f,x}(T)=\LL^{n-1}\sum_{d>0}\left(\int_{\mathscr{F}_x(d)}|\omega(d)|\right)T^d
\end{equation} in $\mathcal{M}_{k}[[T]]$.
\end{theorem}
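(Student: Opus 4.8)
The plan is to compare both sides of \eqref{eq-comparzeta} directly on a fixed embedded resolution, reducing everything to the case of a monomial, where the motivic integral on the analytic Milnor fiber can be computed on an explicit weak N\'eron model and matched with Denef and Loeser's formula. First I would choose a log resolution $h:Y\to X$ of the pair $(X,X_s)$ which is an isomorphism over $X^o=X\setminus X_s$, write $h^{-1}(X_s)=\sum_{i\in J}N_iE_i$ for the total transform, and let $\nu_i-1=\ord_{E_i}(h^*\phi)$ be the multiplicity of $E_i$ in the relative canonical divisor (shifted by the order of $\phi$). By the theory of motivic integration on rigid varieties (Theorem-Definition of Loeser--Sebag) and the fact that $\mathscr{F}_x$ depends only on the formal germ $\widehat{f}_x$, the integral $\int_{\mathscr{F}_x(d)}|\omega(d)|$ can be computed by constructing, for each $d>0$, a weak N\'eron model of $\mathscr{F}_x(d)$ from the base change $Y\times_{\A^1_k}\A^1_k$ (where $\A^1_k\to\A^1_k$ is $u\mapsto u^d$), normalizing, resolving, and restricting to the part lying over $x$. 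The $R(d)$-smooth locus of this model is a disjoint union of pieces indexed by those $i\in J$ with $d\mid N_i$ and with $E_i$ meeting the fiber over $x$, and the contribution of such a piece is, up to the twist $\LL^{-m}$ and the shift by $\ord$ of $\omega(d)=t\cdot\phi/df$, of the form $[E_i^o\cap h^{-1}(x)]\,(\LL-1)^{(\text{number of components through that stratum})-1}\LL^{-(\text{exponent depending on }\nu_i,N_i,d)}$.

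Next I would assemble the generating series $\sum_{d>0}(\int_{\mathscr{F}_x(d)}|\omega(d)|)T^d$. Summing the geometric-type series in $d$ over the strata of the resolution produces exactly the closed form
\begin{equation*}
\sum_{\emptyset\neq I\subset J}(\LL-1)^{|I|-1}[E_I^o\cap h^{-1}(x)]\prod_{i\in I}\frac{\LL^{-\nu_i}T^{N_i}}{1-\LL^{-\nu_i}T^{N_i}},
\end{equation*}
where $E_I^o$ denotes the locally closed stratum of the divisor where exactly the components $\{E_i\}_{i\in I}$ meet. Up to the normalization factor $\LL^{n-1}$ this is precisely Denef and Loeser's formula for the local motivic zeta function $Z_{f,x}(T)$ on the resolution $h$ (their \cite[3.2.1]{DL-geom}, with the $\mu$-action forgotten and the fiber taken at $x$); matching the bookkeeping --- in particular checking that the twist by $t$ in $\omega$ accounts for the $+1$ shift relating $\nu_i$ to the exponent $N_i+\nu_i$ that one gets without the twist --- finishes the comparison. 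Since both sides are manifestly independent of the chosen resolution, and the motivic integral is well-defined by Loeser--Sebag, the identity \eqref{eq-comparzeta} follows.

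The main obstacle is the precise local computation of a weak N\'eron model of $\mathscr{F}_x(d)$ from the resolution $h$ after ramified base change: one must normalize $Y\times_{\A^1_k,u\mapsto u^d}\A^1_k$, understand the new components and their multiplicities (where a component $E_i$ with $\gcd(N_i,d)=e$ splits into $e$ pieces over a degree-$N_i/e$ cover), resolve the resulting toric singularities, and verify that the $R(d)$-smooth locus, when restricted to the preimage of $x$, has connected components with the asserted classes and orders of $\omega(d)$. This is essentially the rigid-analytic incarnation of Denef's resolution computation for $p$-adic integrals, and the careful part is matching the orders $\ord_C\omega(d)$ with the numerical data $(N_i,\nu_i,d)$ including the effect of normalization on the relative canonical sheaf. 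Once this local dictionary is in place, the summation over $d$ and the identification with Denef--Loeser's formula are formal manipulations of geometric series in $\mathcal{M}_k[[T]]$.
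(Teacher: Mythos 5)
Your strategy is exactly the one the paper defers to: the stated proof is a one-line citation to \cite[9.7]{Ni-trace} and \cite[9.10]{NiSe-Inv}, and the paragraph following it explains that those results are proved precisely by constructing weak N\'eron models for $\mathscr{F}_x(d)$ from an embedded resolution of $(X,X_s)$ after ramified base change and comparing the resulting expression to Denef--Loeser's formula. You have correctly reconstructed that outline, including the observation (made explicitly in the paper's proof) that the factor $t$ in $\omega=t\cdot\phi/df$ shifts the order by $d$ over $K(d)$, i.e.\ $\int_{\mathscr{F}_x(d)}|\omega(d)|=\LL^{-d}\int_{\mathscr{F}_x(d)}|(\phi/df)(d)|$.

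There is, however, an error in the closed form you write down. The coefficient of the stratum indexed by $\emptyset\neq I\subset J$ should not be the class $[E_I^o\cap h^{-1}(x)]$ but rather the class of the finite \'etale cover $\widetilde{E}_I^o$ (restricted over $x$) of degree $\gcd\{N_i : i\in I\}$; these covers are exactly what Denef--Loeser's formula \cite[3.3.1]{DL-geom} and the paper's own expression \eqref{eq-ratpres} contain. The covers are not a cosmetic decoration: they are produced by the normalization of $Y\times_{\A^1_k,\,u\mapsto u^d}\A^1_k$, the very step you flag as the crux. A component $E_i$ with $\gcd(N_i,d)=e$ pulls back to $e$ copies of a degree-$(N_i/e)$ cover of $E_i$ living on the normalization, and when you assemble the generating series over $d$, the stratum $E_I^o$ enters weighted by the cover determined by $\gcd\{N_i:i\in I\}$, not by the plain class. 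With $[E_I^o]$ in place of $[\widetilde{E}_I^o]$, the comparison with Denef--Loeser at the end of your argument would fail. So the approach is sound, but the bookkeeping you postpone ("the careful part is matching the orders $\ord_C\omega(d)$ with the numerical data $(N_i,\nu_i,d)$ including the effect of normalization") must also track the geometry of the covering, not only the numerics; once it does, the identity does indeed follow as you describe.
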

\begin{proof}
 Note that, for every $d$ in $\N$, we have
$$ \int_{\mathscr{F}_x(d)}|\omega(d)|=\LL^{-d}\int_{\mathscr{F}_x(d)}|(\phi/df)(d)| $$ in $\mathcal{M}_k$,
since $t$ has valuation $d$ in $K(d)$. Thus the theorem is a
reformulation of \cite[9.7]{Ni-trace}, which is a consequence of
the comparison theorem in \cite[9.10]{NiSe-Inv}.
\end{proof}

The proof of Theorem \ref{thm-compar} is based on an explicit
construction of weak N\'eron models for the rigid varieties
$\mathscr{F}_x(d)$, starting from an embedded resolution of
singularities for $(X,X_s)$. In this way, one obtains an explicit
formula for the right hand side in \eqref{eq-comparzeta} in terms
of such a resolution, and one can compare this expression to the
formula for $Z_{f,x}(T)$ obtained by Denef and Loeser
\cite[3.3.1]{DL-geom}. This formula implies in particular that
$Z_{f,x}[T]$ is contained in the subring
$$\mathcal{M}_k\left[T,\frac{1}{1-\LL^a T^b}\right]_{a\in \Z_{<0},\,b\in \Z_{>0}}$$ of
$\mathcal{M}_k[[T]]$.

If the residue field $k_x$ of $x$ is not $k$, one can adapt the
construction as follows. Since $k$ has characteristic zero, $k_x$
is a separable extension of $k$ so that the morphism
$\widehat{f}_x$ from \eqref{eq-completion} factors through a
morphism
$$\widehat{\mathcal{O}}_{X,x}\to \Spf k_x[[t]]$$ by \cite[19.6.2]{ega4.1}. In this way, we
can view $\mathscr{F}_x$ as a rigid variety over $K_x=k_x((t))$.
Since $K_x$ is separable over $K$, the natural morphism
$\Omega^i_{\mathscr{F}_x/K_x}\to \Omega^i_{\mathscr{F}_x/K}$ is an
isomorphism for all $i$, so that we can consider the Gelfand-Leray
form $\phi/df$ as an element of  $\Omega^{m}_{\mathscr{F}_x/K_x}$.
Then the equality \eqref{eq-comparzeta} holds in
$\mathcal{M}_{k_x}[[T]]$.

\begin{conjecture}[Motivic Monodromy
Conjecture]\label{conj-motmon}  Assume that $k$ is a subfield of
$\C$. There exists a finite subset $\mathscr{S}$ of $\Z_{<
0}\times \Z_{>0}$ such that $Z_{f,x}(T)$ belongs to the subring
$$\mathcal{M}_{k_x}\left[T,\frac{1}{1-\LL^a T^b}\right]_{(a,b)\in
\mathscr{S}}$$ of $\mathcal{M}_{k_x}[[T]]$, and such that for
every couple $(a,b)$ in $\mathscr{S}$, the quotient $a/b$ is a
root of the Bernstein-Sato polynomial $b_f(s)$ of $f$. In
particular, there exists a point $y$ of $X(\C)$ such that $f(y)=0$
and such that $\exp(2\pi i a/b)$ is a local monodromy eigenvalue
of $f$ at the point $y$.
\end{conjecture}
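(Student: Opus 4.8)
This statement is Denef and Loeser's Motivic Monodromy Conjecture, which is open in general; what follows is the strategy that has settled it in all the cases where it is known, together with an identification of the decisive obstacle. Observe first that the ``in particular'' clause is automatic: once $Z_{f,x}(T)$ is known to lie in a localized ring with denominators $1-\LL^{a}T^{b}$, $(a,b)\in\mathscr{S}$, and each quotient $a/b$ is a root of $b_f(s)$, Malgrange's theorem \cite{malgrange-ast} (recalled in Section \ref{subsec-bernstein}) provides a point $y$ of $X(\C)$ with $f(y)=0$ at which $\exp(2\pi i\,a/b)$ is a local monodromy eigenvalue. So the whole content is the production of such a set $\mathscr{S}$.

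The plan is threefold: resolve, expand, cancel. Since $k$ has characteristic zero, first choose an embedded resolution $h\colon Y\to X$ of $(X,X_s)$, i.e. a proper birational morphism with $Y$ smooth such that $(f\circ h)^{-1}(0)\cup\mathrm{Exc}(h)=\bigcup_{i\in I}E_i$ is a strict normal crossings divisor, and record the numerical data $(N_i,\nu_i)_{i\in I}$, where $N_i$ is the multiplicity of $E_i$ in $\mathrm{div}(f\circ h)$ and $\nu_i-1$ its multiplicity in the relative canonical divisor $K_{Y/X}$. Second, construct weak N\'eron models of the $\mathscr{F}_x(d)$ directly from $h$ --- keeping, for each $d$, the components coming from those $E_i$ with $N_i\mid d$, after a suitable normalization and a $d$-th root cover --- feed this into Theorem \ref{thm-compar}, and sum the resulting geometric series in $d$. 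Up to the normalization conventions of this paper, one arrives at a closed formula of the shape
$$Z_{f,x}(T)=\sum_{\emptyset\neq J\subseteq I}(\LL-1)^{|J|-1}\,\big[\widetilde{E_J^{\circ}}\big]\,\prod_{i\in J}\frac{\LL^{-\nu_i}T^{N_i}}{1-\LL^{-\nu_i}T^{N_i}},$$
where $E_J^{\circ}=\big(\bigcap_{i\in J}E_i\big)\setminus\big(\bigcup_{i\notin J}E_i\big)$ is restricted to the fiber $h^{-1}(x)$ and $\widetilde{E_J^{\circ}}$ denotes the relevant finite cover; this agrees with Denef and Loeser's formula \cite{DL-geom}. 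In particular $Z_{f,x}(T)$ is rational, and a first candidate pole set is $\mathscr{S}_0=\{(-\nu_i,N_i):i\in I,\ E_i\cap h^{-1}(x)\neq\emptyset\}$, with common factors cleared.

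The third step is where essentially all the difficulty lies: one must discard every candidate $(-\nu_i,N_i)$ for which $-\nu_i/N_i$ is \emph{not} a root of $b_f(s)$, by showing that the apparent pole along $1-\LL^{-\nu_i}T^{N_i}$ cancels in the displayed sum. The heuristic is that such a ``bad'' $E_i$ is contracted by $h$ to a locus where, by constructibility of the nearby cycles and A'Campo's description of the monodromy zeta function, $E_i$ contributes no eigenvalue; one then hopes that the strata $E_J^{\circ}$ with $i\in J$ are arranged so that their contributions telescope. This can indeed be carried out for curves ($n=2$, following Loeser), for $f$ non-degenerate with respect to its Newton polyhedron (via toric combinatorics), and in a handful of further special cases, but no argument is known in general. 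I expect this cancellation phenomenon to be the main obstacle: a full proof would presumably require a genuinely cohomological interpretation of the poles of $Z_{f,x}(T)$ that explains, conceptually, why non-monodromic numerical data is always washed out.
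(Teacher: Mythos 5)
The statement you were asked to prove is a \emph{conjecture}---the paper presents it as Denef and Loeser's Motivic Monodromy Conjecture and says explicitly that ``the general case remains wide open''---so there is no proof of it in the paper against which to compare. Your proposal is honest about this and does not pretend otherwise; what it offers is an accurate account of the state of the art, and it matches the paper's own discussion in the two places where such a comparison is possible. First, your observation that the ``in particular'' clause is automatic by Malgrange's theorem is exactly the reduction recorded at the end of Section \ref{subsec-bernstein}. Second, your ``resolve--expand'' steps reproduce the rationality statement the paper derives from Denef--Loeser's formula \cite[3.3.1]{DL-geom} together with the weak N\'eron model construction behind Theorem \ref{thm-compar}: membership of $Z_{f,x}(T)$ in the subring with denominators $1-\LL^{-\nu_i}T^{N_i}$ coming from the numerical data of an embedded resolution. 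The third step---showing that the candidate poles $-\nu_i/N_i$ that are not roots of $b_f(s)$ always cancel---is precisely what the paper calls the ``cancellations in the formula'' and is where the conjecture lives; you correctly identify this as the open obstacle and correctly list the families of cases (curves, Newton-nondegenerate $f$, etc.) where it has been verified. In short: your submission is not a proof and cannot be, but it is a faithful and correct description of what is known and why a proof is currently out of reach.
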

\begin{remark}
One can drop the condition that $k$ is a subfield of $\C$ by using
$\ell$-adic nearby cycles to define the notion of local monodromy
eigenvalue. This does not yield a more general conjecture, since
by the Lefschetz principle, one can always reduce to the case
where $k$ is a subfield of $\C$.
\end{remark}

One needs to be careful when speaking about poles of the zeta
function, since $\mathcal{M}_{k_x}$ is not a domain. A precise
definition is given in \cite{RoVe}. The formulation in Conjecture
\ref{conj-motmon} implies that, for any reasonable definition of
pole in this context, the poles of $Z_{f,x}(\LL^{-s})$ are of the
form $a/b$, with $(a,b)\in \mathscr{S}$. With some additional
work, one can define the order of a pole \cite{RoVe}, and
conjecture that the order of a pole of $Z_{f,x}(\LL^{-s})$ is at
most the multiplicity of the corresponding root of $b_f(s)$.

\subsection{Relation with the $p$-adic monodromy
conjecture}\label{subsec-comparmon} Let us explain the precise
relation between Conjectures \ref{conj-motmon} and
\ref{conj-strongmc}. In \cite[3.2.1]{DL-geom}, Denef and Loeser
define the motivic zeta function $Z_f(T)$ (there denoted by
$Z(T)$) associated to the morphism $f$. It carries more structure
than we've considered so far: it is a formal power series with
coefficients in the {\em
 equivariant} Grothendieck ring of $X_s$-varieties
 $\mathcal{M}_{X_s}^{\mu}$. The elements of this ring are virtual classes of
 $X_s$-varieties that carry an action of the profinite group
 scheme $\mu$ of roots of unity. The $X_s$-structure
 allows to consider various ``motivic Schwartz-Bruhat functions''
 and the
 $\mu$-action allows to twist the motivic zeta function
 by ``motivic characters'', like in the $p$-adic case; see
 \cite{motigusa}. The motivic zeta function that we've alluded to
 in Section \ref{subsec-motzeta} corresponds to the trivial character;
 it is the ``na\"{i}ve'' motivic zeta function from
 \cite[3.2.1]{DL-geom}. If $k=\Q$ and $X=\A^n_k$, then for $p\gg 0$, we can
 specialize the na\"{i}ve motivic zeta function of $f$ to the
 $p$-adic one, by counting rational points on the reductions of
 the coefficients modulo $p$. This is explained in \cite[\S5.3]{Ni-japan}.

 The local zeta function
 $Z_{f,x}(T)$ that we've considered above is obtained from
 $Z_f(T)$ by applying the morphism
 $$\mathcal{M}_{X_s}^{\mu}\to \mathcal{M}_{k_x}$$
(base change to $x$ and forgetting the $\mu$-structure)
 to the
 coefficients of $Z_f(T)$. In an unpublished manuscript, the second author has shown that it is possible to recover the
 $\mu$-structure on $Z_{f,x}(T)$ by considering the
 Galois action on the extensions $K(d)$ of $K$.

One can formulate Conjecture \ref{conj-motmon} for
 $Z_f(T)$ instead of $Z_{f,x}(T)$, as follows:
 \begin{conjecture}[Motivic Monodromy
Conjecture II]\label{conj-motmon2}  Assume that $k$ is a subfield
of $\C$. There exists a finite subset $\mathscr{S}$ of
$\Z_{<0}\times \Z_{>0}$ such that $Z_{f}(T)$ belongs to the
subring
$$\mathcal{M}^{\mu}_{X_s}\left[T,\frac{1}{1-\LL^a T^b}\right]_{(a,b)\in
\mathscr{S}}$$ of $\mathcal{M}^{\mu}_{X_s}[[T]]$, and such that
for every couple $(a,b)$ in $\mathscr{S}$, the quotient $a/b$ is a
root of the Bernstein-Sato polynomial $b_f(s)$ of $f$. In
particular, there exists a point $y$ of $X(\C)$ such that $f(y)=0$
and such that $\exp(2\pi i a/b)$ is a local monodromy eigenvalue
of $f$ at the point $y$.
\end{conjecture}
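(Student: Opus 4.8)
The plan is to follow the standard embedded-resolution strategy, isolating the one genuinely hard point. First I would choose an embedded resolution $h:Y\to X$ of the pair $(X,X_s)$, so that $Y$ is smooth, $h$ is an isomorphism over $X\setminus X_s$, and $h^{-1}(X_s)_{\mathrm{red}}=\bigcup_{i\in J}E_i$ is a strict normal crossings divisor; write $N_i$ for the multiplicity of $E_i$ in $h^*(X_s)$ and $\nu_i-1$ for its multiplicity in the relative canonical divisor $K_{Y/X}$. For $I\subseteq J$ put $E_I^{\circ}=(\bigcap_{i\in I}E_i)\setminus(\bigcup_{j\notin I}E_j)$ and let $\widetilde{E_I^{\circ}}\to E_I^{\circ}$ be the $\mu_{m_I}$-cover ($m_I=\gcd_{i\in I}N_i$) constructed by Denef and Loeser. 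Their formula \cite[3.3.1]{DL-geom} then expresses
$$Z_f(T)=\sum_{\emptyset\neq I\subseteq J}(\LL-1)^{|I|-1}\,[\widetilde{E_I^{\circ}}\to X_s]\prod_{i\in I}\frac{\LL^{-\nu_i}T^{N_i}}{1-\LL^{-\nu_i}T^{N_i}}$$
in $\mathcal{M}^{\mu}_{X_s}[[T]]$; this is the equivariant version of the formula underlying Theorem \ref{thm-compar}. In particular $Z_f(T)$ lies in $\mathcal{M}^{\mu}_{X_s}\left[T,\frac{1}{1-\LL^{a}T^{b}}\right]$ with $(a,b)$ ranging over the finite set $\mathscr{S}_0=\{(-\nu_i,N_i):i\in J\}$, which settles the rationality half of the statement.

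The remaining and substantial point is that $\mathscr{S}_0$ must be shrunk to a set $\mathscr{S}$ all of whose pairs $(a,b)$ satisfy: $a/b$ is a root of $b_f(s)$. The numbers $-\nu_i/N_i$ attached to the ``auxiliary'' exceptional components are in general not roots of $b_f$, so one has to prove that the candidate poles they contribute actually cancel in $\mathcal{M}^{\mu}_{X_s}$. The approach I would take combines two ingredients. First, a combinatorial analysis of the dual graph of the resolution: for a non-relevant component $E_i$ one collects all terms of the Denef--Loeser formula in which $E_i$ occurs and shows that, after summation, the coefficient of $(1-\LL^{-\nu_i}T^{N_i})^{-1}$ lies in the ideal generated by $(\LL-1)$ times classes supported on the intersections $E_i\cap E_j$, so that an inductive ``telescoping'' removes it; this is the mechanism behind the proof of the monodromy conjecture for surfaces. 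Second, an identification of the surviving poles with the spectrum of the limit mixed Hodge structure of $f$: the motivic nearby cycles $-\lim_{T\to\infty}Z_f(T)$ have Hodge realization the limit MHS, and Kashiwara's and Malgrange's results recalled in Section \ref{subsec-bernstein} tie that spectrum to the roots of $b_f$. Granting these, the ``in particular'' clause follows immediately from Malgrange's theorem quoted in Section \ref{subsec-bernstein}.

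The hard part --- the reason this is stated as a conjecture --- is the first ingredient: controlling the cancellations in the Denef--Loeser formula for an arbitrary resolution in arbitrary dimension, while keeping track of both the $X_s$-structure and the $\mu$-action. The $\mu$-action is a double-edged sword: it records the monodromy eigenvalues faithfully, so it rigidifies the bookkeeping and makes the classes harder to collapse, yet it is precisely the extra data one wants in the conclusion, so any successful argument must exploit it rather than discard it. A realistic staged version of the plan would therefore first prove the statement after applying the forgetful maps $\mathcal{M}^{\mu}_{X_s}\to\mathcal{M}_{k_x}$ at all closed points $x$ --- that is, Conjecture \ref{conj-motmon} --- and then recover the $\mu$-structure from the Galois action on the extensions $K(d)$ of $K$, as indicated in Section \ref{subsec-comparmon}; but even the forgetful version is open beyond dimension two and a few special classes of singularities, so in full generality this is a programme rather than a proof.
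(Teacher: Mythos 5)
You have correctly recognized that Conjecture \ref{conj-motmon2} is precisely that --- a conjecture --- and that the paper does not prove it; it attributes the formulation to Denef and Loeser and records its implications (it implies Conjecture \ref{conj-motmon}, the $p$-adic Monodromy Conjecture \ref{conj-strongmc}, and the twisted $p$-adic versions), but explicitly states that the general case remains open. Your closing sentence, acknowledging that what you wrote is a programme rather than a proof and that even the forgetful version is open beyond dimension two and special classes, is the honest and accurate assessment, and it aligns with the paper's own discussion. The rationality half of your plan (Denef--Loeser's explicit formula over an embedded resolution, giving the candidate set $\mathscr{S}_0=\{(-\nu_i,N_i)\}$) is standard and correct; the cancellation half is exactly where the difficulty lies and where no general argument is known. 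One small caution on the second ingredient you propose: Malgrange's theorem (recalled in Section \ref{subsec-bernstein}) gives that roots of $b_f$ exponentiate to monodromy eigenvalues, but the converse containment --- that every surviving pole $a/b$ of $Z_f$ actually lands among the roots of $b_f$, not merely among monodromy eigenvalues --- is the stronger content of the conjecture and is not supplied by spectrum/limit-MHS considerations alone. So even granting your first ingredient, the passage from ``pole gives a monodromy eigenvalue'' to ``pole gives a root of $b_f$'' would require an additional argument; this is the gap between the weak and strong forms, and your proposal conflates them slightly. None of this detracts from the core point: there is no proof in the paper to compare against, and your assessment of the state of the problem is accurate.
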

This conjecture implies
 \begin{enumerate}
\item Conjecture \ref{conj-motmon}, since we can specialize
$Z_f(T)$ to $Z_{f,x}(T)$ by taking fibers at $x$ and forgetting
the $\mu$-structure,
 \item Conjecture
\ref{conj-strongmc}, because $Z_f(T)$ can be specialized to the
$p$-adic zeta
 function,
 \item more generally, the $p$-adic
 monodromy conjecture for zeta functions that are twisted by
 characters \cite[\S2.4]{motigusa}.
\end{enumerate}
Various weaker reformulations of Conjecture \ref{conj-motmon2}
have appeared in the literature (e.g. in \cite[\S2.4]{motigusa}).
The formulation we use seems to be part of general folklore. We
attribute it to Denef and Loeser.

\section{The motivic zeta function of an abelian variety}\label{sec-abelian}
\subsection{Some notation}\label{subsec-not}
Let $R$ be a complete discrete valuation ring with maximal ideal
$\mathfrak{m}$, fraction field $K$ and algebraically closed
residue field $k$. We denote by $p$ the characteristic exponent of
$k$, and by $\N'$ the set of strictly positive integers that are
prime to $p$. We fix a prime $\ell\neq p$ and a separable closure
$K^s$ of $K$. The Galois group $G(K^s/K)$ is called the {\em
inertia group} of $K$.

A finite extension of $K$ is called {\em tame} if its degree is
prime to $p$. For every $d$ in $\N'$, the field $K$ admits  a
unique degree $d$ extension $K(d)$ in $K^s$. It is obtained by
joining a $d$-th root of a uniformizer to $K$. The extension
$K(d)/K$ is Galois, with Galois group $\mu_d(k)$.

The union of the fields $K(d)$ is a subfield of $K^s$, called the
{\em tame closure} $K^t$ of $K$. The Galois group $G(K^t/K)$ is
called the {\em tame inertia group} of $K$. It is canonically
isomorphic to the procyclic group
$$\mu'(k)=\lim_{\stackrel{\longleftarrow}{d\in \N'}}\mu_d(k)$$
where the elements in $\N'$ are ordered by divisibility and the
transition morphisms in the projective system are given by
$$\mu_{de}(k)\to \mu_d(k):x\mapsto x^e$$ for all $d,e$ in $\N'$.
 We call every
topological generator of $G(K^t/K)$ a {\em tame monodromy
operator}. The Galois group $P=G(K^s/K^t)$ is a pro-$p$-group
which is called the {\em wild inertia subgroup} of $G(K^s/K)$. We
have a short exact sequence
$$1\to P\to G(K^s/K)\to G(K^t/K)\to 1.$$
\subsection{N\'eron models and semi-abelian reduction}
Let $A$ be an abelian $K$-variety of dimension $g$. It is not
always possible to extend $A$ to an abelian scheme over $R$.
However, there exists a canonical way to extend $A$ to a smooth
commutative group scheme over $R$, the so-called \emph{N\'eron
model} of $A$.

\begin{definition}
A N\'eron model of $A$ is a smooth $R$-scheme of finite type
$\mathcal{A}$, endowed with an isomorphism $$\mathcal{A}\times_R
K\to A,$$ such that the natural map
\begin{equation}\label{eq-Ner}
\Hom_R(T,\mathcal{A}) \to \Hom_K(T \times_R K, A)
\end{equation}
is a bijection for every smooth $R$-scheme $T$.
\end{definition}

Thus $\mathcal{A}$ is the minimal smooth $R$-model of
 $A$. The existence of a N\'eron model was first proved by A. N\'eron
\cite{neron0}. For a modern scheme-theoretic treatment of the
theory and an accessible proof of N\'eron's theorem, we refer to
\cite{neron}.  The universal property of the N\'eron model implies
that the
 N\'eron model
 $\mathcal{A}$ is unique up to unique isomorphism, and that the
 $K$-group structure on $A$ extends uniquely to a commutative $R$-group
 structure on $\mathcal{A}$. Taking for $T$ the spectrum of a
 finite unramified extension of $R$, we see that  $\mathcal{A}$ is also a weak N\'eron
 model for $A$.

The special fiber $\mathcal{A}_s := \mathcal{A} \times_R k$ is a
smooth commutative algebraic $k$-group. We denote by
$\mathcal{A}_s^{o}$ the identity component of $\mathcal{A}_s$,
i.e., the connected component containing the identity point for
the group structure. The quotient $ \Phi_A :=
\mathcal{A}_s/\mathcal{A}_s^{o}$ is called the \emph{component
group}. It is a finite \'etale group scheme over $k$ whose group
of $k$-points corresponds bijectively to the the set of connected
components of $\mathcal{A}_s$. Since $k$ is assumed to be
algebraically closed, we will not distinguish between the group
scheme $ \Phi_A $ and the abstract group $\Phi_A(k)$.

The identity component $\mathcal{A}_s^{o}$ fits into a canonical
short exact sequence of algebraic $k$-groups, the \emph{Chevalley
decomposition},
\begin{equation}\label{eq-Che}
0 \to T \times_k U \to \mathcal{A}_s^{o} \to B \to 0
\end{equation}
where $B$ is an abelian variety, $T$ is a torus and $U$ is a
unipotent group commonly referred to as the \emph{unipotent
radical} of $\mathcal{A}_s^{o}$. We call the dimension of $T$ the
{\em toric rank} of $A$, and the dimension of $U$ the {\em
unipotent rank} of $A$.

\begin{definition}
We say that $A$ has semi-abelian reduction if the unipotent rank
of $\mathcal{A}_s^{o}$ is zero.
\end{definition}

A celebrated result by A. Grothendieck, the Semi-Stable Reduction
Theorem for abelian varieties \cite[IX.3.6]{sga7a}, asserts that
there exists a finite separable extension $K'/K$ such that $A
\times_K K' $ has semi-abelian reduction over the integral closure
$R'$ of $R$ in $K'$. Inside our fixed separable closure $K^s$ of
$K$, there exists a unique minimal extension $L$ with this
property, and it is Galois over $K$. By \cite[IX.3.8]{sga7a}, $L$
is the fixed field of the subgroup of $G(K^s/K)$ consisting of the
elements that act unipotently on the $\ell$-adic Tate module
$T_\ell A$ of $A$. If $L$ is a tame extension of $K$
 then we say
that $A$ is \emph{tamely ramified}. Since the $P$-action on
$T_\ell A$ factors through a finite quotient of $P$
\cite[p.3]{lenstra-oort}, $A$ is tamely ramified if and only if
$P$ acts trivially on $T_\ell A$.
 In that
 case, $P$ acts trivially on the $\ell$-adic cohomology of
$A$, and the natural morphism
$$H^i(A\times_K K^t,\Q_\ell)\to H^i(A\times_K K^s,\Q_\ell)$$ is an
isomorphism for every $i$ in $\N$.


\subsection{The base change conductor and the potential toric rank}\label{subsec-bcc}
In \cite{chai}, Chai introduced an invariant that measures how far
the abelian $K$-variety $A$ is from having semi-abelian reduction.
He called it the {\em base change conductor} of $A$ and denoted it
by $c(A)$. It is defined as follows. Let $K'/K$ be a finite
separable extension such that $A$ acquires semi-abelian reduction
over $K'$, and let $\mathcal{A}'$ be the N\'eron model of $ A
\times_K K' $. By the universal property of the N\'eron model
$\mathcal{A}'$, there is a unique morphism
\begin{equation}\label{eq-h} h : \mathcal{A} \times_R R' \to
\mathcal{A}' \end{equation} that extends the canonical isomorphism
between the generic fibers. The induced map
$$ \Lie(h) : \Lie(\mathcal{A} \times_R R') \to \Lie(\mathcal{A}') $$
is an injective homomorphism of free $R'$-modules of rank $g$, so
that $\coker(\Lie(h))$ is an $R'$-module of finite length. The
rational number
$$ c(A) := [K':K]^{-1} \cdot \length_{R'} (\coker(\Lie(h))) $$
is independent of the choice of $K'$.

The importance of the Semi-Stable Reduction Theorem lies in the
fact that, if $A$ has semi-abelian reduction, then $h$ is an open
immersion, so that it induces an isomorphism between the identity
components of $\mathcal{A}\times_R R'$ and $\mathcal{A}'$
\cite[IX.3.3]{sga7a} (the number of connected components of
$\mathcal{A}$ might still change, though; this will be discussed
below). Thus $c(A)$ vanishes if $A$ has semi-abelian reduction.
Conversely, if $c(A)=0$ then $h$ must be \'etale, and the fact
that $h$ restricts to an isomorphism between the generic fibers
then implies that $h$ is an open immersion. Thus $c(A)$ is zero if
and only if $A$ has semi-abelian reduction.

Another invariant that will be important for us is the toric rank
of $A\times_K K'$. We call this value the {\em potential toric
rank} of $A$, and denote it by $t_{\mathrm{pot}}(A)$. Again, it is
independent of the choice of $K'$. Moreover, it is the maximum of
the toric ranks of the abelian varieties $A\times_K K''$ as $K''$
ranges over all the finite separable extensions of $K$. The
potential toric rank is a measure for the potential degree of
degeneration of $A$ over the closed point of $\Spec R$; it
vanishes if and only if, after a finite separable extension of the
base field $K$, the abelian variety $A$ extends to an abelian
scheme over $R$. In this case, we say that $A$ has {\em potential
good reduction}. If $t_{pot}(A)$ has the largest possible value,
namely, the dimension of $A$, then we say that $A$ has {\em
potential purely multiplicative reduction}. Thus $A$ has potential
purely multiplicative reduction if and only if the identity
component of $\mathcal{A}'_s$ is a torus.

\subsection{The motivic zeta function of an abelian variety}
For every $d$ in $\N'$, we set $A(d) = A \times_K K(d)$ and we
denote by $\mathcal{A}(d)$
 the N\'eron model of $A(d)$. For every gauge form $\omega$ on $A$, we denote by $\omega(d)$ its
pullback to $A(d)$. We define the order
$\mathrm{ord}_{\mathcal{A}(d)^o_s}\omega(d)$ of $\omega(d)$ along
$\mathcal{A}(d)_s^o$ as in Section \ref{subsec-motint}.





\begin{definition}\label{def-disformabelian}
A gauge form $\omega$ on $A$ is  \emph{distinguished} if it is the
restriction to $A$ of a generator of the free rank one
$\mathcal{O}_A$-module $ \Omega^g_{\mathcal{A}/R} $.
\end{definition}

It is clear from the definition that a distinguished gauge form
always exists, and that it is unique up to multiplication with
 a unit in $R$. Note that a gauge form $\omega$ on $A$ is
 distinguished if and only if $\ord_{\mathcal{A}_s^o}\omega=0$.

 In general, a distinguished gauge form on $A$ does not remain
 distinguished under base change to a finite tame extension $K(d)$
 of $K$. To measure the defect, we introduce the following
 definition.

\begin{definition}
Let $A$ be an abelian $K$-variety, and let $\omega$ be a
distinguished gauge form on $A$. The order function of $A$ is the
function
$$\ord_A:\N'\to \N:d\mapsto -\ord_{\mathcal{A}(d)_s^o}\omega(d).$$
\end{definition}
This definition does not depend on the choice of distinguished
gauge form, since multiplying $\omega$ with a unit in $R$ does not
affect the order of $\omega(d)$ along $\mathcal{A}(d)_s^o$. The
fact that $\ord_A$ takes its values in $\N$ follows easily from
the existence of the morphism $h$ in \eqref{eq-h}, for arbitrary
finite extensions $K'$ of $K$.

\begin{definition}\label{def-abzeta}
Let $A$ be an abelian $K$-variety, and let $\omega$ be a
distinguished gauge form on $A$. We define the motivic zeta
function $Z_A(T)$ of $A$ as
$$Z_A(T)=\sum_{d\in
\N'}[\mathcal{A}(d)_s]\LL^{\ord_{A}(d)} T^d \in
\mathcal{M}_k[[T]].$$
\end{definition}
 The
following proposition gives an interpretation of $Z_A(T)$ in terms
of the volumes of the ``motivic Haar measures'' $|\omega(d)|$.

\begin{prop}\label{prop-mothaar}
Let $A$ be an abelian $K$-variety of dimension $g$, and let
$\omega$ be a distinguished gauge form on $A$. The image of
$Z_A(T)$ in the quotient ring $\mathcal{M}^R_k[[T]]$ of
$\mathcal{M}_k[[T]]$ is equal to
$$\LL^g \sum_{d\in \N'}\left(\int_{A(d)}|\omega(d)|\right)T^d.$$
\end{prop}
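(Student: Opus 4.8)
The plan is to unwind both sides of the asserted identity using the formula for the motivic integral on a weak N\'eron model. First I would observe that for each $d\in\N'$, the N\'eron model $\mathcal{A}(d)$ of $A(d)$ is in particular a weak N\'eron model for $A(d)$ (this is noted right after the definition of the N\'eron model, taking $T$ to be the spectrum of a finite unramified extension of $R'$, where $R'$ is the integral closure of $R$ in $K(d)$). Hence by Proposition \ref{prop-motintalg}, applied to the smooth proper $K(d)$-variety $A(d)$ of pure dimension $g$ and the gauge form $\omega(d)$, we have
$$\int_{A(d)}|\omega(d)|=\LL^{-g}\sum_{C\in\pi_0(\mathcal{A}(d)_s)}[C]\,\LL^{-\ord_C\omega(d)}$$
in $\mathcal{M}^R_k$. (Here I am using that $A(d)$ is proper, hence $A(d)^{\rig}$ is quasi-compact, so a weak N\'eron model exists and the rigid analytification is bounded; this is exactly the remark following Proposition \ref{prop-comparweakner}.)

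The key step is then to show that each term $[C]\,\LL^{-\ord_C\omega(d)}$ is independent of the component $C$, so that the sum collapses to $[\mathcal{A}(d)_s]\,\LL^{-\ord_{\mathcal{A}(d)_s^o}\omega(d)}$. For this I would use the $R'$-group scheme structure on $\mathcal{A}(d)$: translation by a point of $\mathcal{A}(d)(R')$ lying over a given $k$-point of any component $C$ gives an $R'$-isomorphism $\mathcal{A}(d)\to\mathcal{A}(d)$ carrying $\mathcal{A}(d)_s^o$ onto $C$. Since $k$ is algebraically closed, every component has a $k$-point, and since $\mathcal{A}(d)$ is smooth over the henselian ring $R'$, every such $k$-point lifts to an $R'$-point; thus all components of $\mathcal{A}(d)_s$ are isomorphic as $k$-schemes, giving $\sum_{C}[C]=\#\Phi_{A(d)}(k)\cdot[\mathcal{A}(d)_s^o]=[\mathcal{A}(d)_s]$ in $\mathcal{M}^R_k$. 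Moreover such a translation is an automorphism of $\mathcal{A}(d)$ as an $R'$-scheme and hence preserves the sheaf $\Omega^g_{\mathcal{A}(d)/R'}$; but a distinguished gauge form generates this sheaf and is translation-invariant up to an $R'$-unit (being, after base change, a pullback of a distinguished form on $A$, which one checks is invariant under the group law up to units), so $\ord_C\omega(d)=\ord_{\mathcal{A}(d)_s^o}\omega(d)=-\ord_A(d)$ for every $C$. Substituting gives $\int_{A(d)}|\omega(d)|=\LL^{-g}[\mathcal{A}(d)_s]\,\LL^{\ord_A(d)}$, and multiplying by $\LL^g T^d$ and summing over $d\in\N'$ yields precisely the image of $Z_A(T)$ in $\mathcal{M}^R_k[[T]]$.

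The main obstacle is the claim that a distinguished gauge form $\omega$ on $A$, or rather its pullback $\omega(d)$, is invariant under translations of the N\'eron model up to $R'$-units, since a priori translation-invariance of $\Omega^g$ on the generic fiber $A(d)$ (where it is clear, as an abelian variety has a translation-invariant top form, unique up to a scalar in $K(d)$) must be propagated to the integral model. I would argue this by: (i) the top-degree invariant differential on $A(d)$ extends uniquely to a generator of $\Omega^g_{\mathcal{A}(d)^o/R'}$ by properties of N\'eron models of semi-abelian-type identity components, or more elementarily by noting that the pullback of $\omega(d)$ under translation by an $R'$-point differs from $\omega(d)$ by a nowhere-vanishing regular function on $\mathcal{A}(d)$, which on each connected component of the smooth $R'$-scheme $\mathcal{A}(d)$ must be an $R'$-unit since the special fiber is reduced and the generic fiber function is a $K(d)$-scalar; and (ii) concluding that the order along any component equals the order along the identity component. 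Once this invariance is in hand, the rest is bookkeeping with the definition of $\ord_A$ and the scissor relations.
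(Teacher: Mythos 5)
Your proof is correct and follows the same route as the paper's: the N\'eron model is in particular a weak N\'eron model, translation-invariance of the gauge form forces $\ord_C\omega(d)$ to be constant over the components $C$, and the scissor relations assemble $\sum_C[C]=[\mathcal{A}(d)_s]$. The one place you work substantially harder than necessary---and flag as ``the main obstacle''---is the translation-invariance of $\omega(d)$ on the integral model. There is actually no unit-ambiguity to argue away: the $K(d)$-vector space $H^0(A(d),\Omega^g_{A(d)/K(d)})$ is one-dimensional and spanned by an invariant differential, so \emph{every} gauge form on $A(d)$ is a constant multiple of an invariant one and is therefore itself exactly invariant (the scalar commutes with pullback along translations). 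Each translation by a point of $\mathcal{A}(d)(R')$ extends to an $R'$-automorphism of $\mathcal{A}(d)$ by the N\'eron mapping property, and since $\mathcal{A}(d)$ is integral (it is $R'$-flat with irreducible generic fiber), the identity $\tau_a^*\omega(d)=\omega(d)$ on $A(d)$ holds as an identity of rational sections on all of $\mathcal{A}(d)$; hence $\ord_C\omega(d)=\ord_{\mathcal{A}(d)_s^o}\omega(d)$ with nothing to ``propagate.'' This is precisely the one-line observation the paper's proof makes.
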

\begin{proof}
We've already observed that every N\'eron model is also a weak
N\'eron model. The gauge form $\omega$ is translation-invariant,
so that
$$\ord_{C}\omega(d)=\ord_{\mathcal{A}(d)_s^o}\omega(d)$$ for every
connected component $C$ of $\mathcal{A}(d)_s$. Now the result
follows immediately from the definition of the motivic integral,
and the fact that
$$[\mathcal{A}(d)_s]=\sum_{C\in \pi_0(\mathcal{A}(d)_s)}[C]$$ by the
scissor relations in $\mathcal{M}_k$.
\end{proof}

\subsection{The monodromy conjecture} Now we come to the formulation of the main result  of
\cite{HaNi-zeta}, which is a variant of Conjecture
\ref{conj-weakmc} for abelian varieties. For every integer $d\geq
0$, we denote by $\Phi_d(t)\in \Z[t]$ the cyclotomic polynomial
whose roots are the primitive $d$-th roots of unity. For every
rational number $q$, we denote by $\tau(q)$ its order in the group
 $\mathbb{Q}/\mathbb{Z}$.

\begin{theorem}[Monodromy conjecture for abelian varieties]\label{thm-main}
Let $A$ be a tamely ramified abelian variety of dimension $g$, and
let $\sigma$ be a tame monodromy operator in $G(K^t/K)$.
\begin{enumerate}
\item The motivic zeta function $Z_A(T)$ belongs to the subring
$$ \mathcal{M}_k \left[ T, \frac{1}{1 - \mathbb{L}^a T^b}\right]_{(a,b) \in \mathbb{N} \times \mathbb{Z}_{>0}, a/b = c(A)} $$
of $ \mathcal{M}_k[[T]] $. The zeta function
$Z_A(\mathbb{L}^{-s})$ has a unique pole at $ s= c(A) $, of order
$t_{pot}(A) + 1$.

\item The cyclotomic polynomial $\Phi_{\tau(c(A))}(t)$ divides the
characteristic polynomial of the tame monodromy operator $\sigma$
on $H^g(A \times_K K^t, \mathbb{Q}_{\ell}) $. Thus for every
embedding $\mathbb{Q}_{\ell} \hookrightarrow \mathbb{C}$, the
value $ \exp(2 \pi c(A) i) $ is an eigenvalue of $\sigma$ on
$H^g(A \times_K K^t, \mathbb{Q}_{\ell}) $.
\end{enumerate}
\end{theorem}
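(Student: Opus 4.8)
The plan is to reduce everything to an explicit computation of the ingredients $[\mathcal{A}(d)_s]$ and $\ord_A(d)$ for $d \in \N'$, using the structure theory of N\'eron models under tame base change. First I would analyze the order function $\ord_A$. Since $A$ is tamely ramified, it acquires semi-abelian reduction over some $K(e)$ with $e \in \N'$. Over such an extension the Chevalley decomposition stabilizes, and for $d$ a multiple of $e$ one can compare $\Lie(\mathcal{A}(d))$ with $\Lie(\mathcal{A}(e))$ via the base-change morphism analogous to \eqref{eq-h}. The key input is that the jumps of the filtration induced by the tame Galois action on $\Lie(\mathcal{A}\times_K K^t)$ are governed by a finite multiset of rational numbers in $[0,1)$ summing to $c(A)$; this is precisely the content of the relation between $c(A)$ and the tame Galois module structure (as in Chai's work and the companion papers \cite{HaNi-comp,HaNi-zeta}). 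Concretely I expect $\ord_A(d) = \sum_{i=1}^g \lceil d\, a_i \rceil$ where $a_1,\dots,a_g \in [0,1)\cap\Q$ are the tame base change "slopes" of $A$, with $\sum a_i = c(A)$ and with exactly $t_{pot}(A)$ of the $a_i$ equal to a nonzero value (the ones contributing to the toric part degenerate with slope $1/\text{(ramification)}$-type behaviour, while the abelian part of the reduction contributes slope $0$). The function $d \mapsto \lceil d a_i\rceil$ has generating series $\sum_{d>0}\LL^{\lceil d a_i\rceil}T^d$ a rational function with a single pole along $\LL^{a_i}T = 1$ in the variable substitution $T = \LL^{-s}$, i.e.\ a pole at $s = a_i$ — but only when $a_i = c(A)$ do these poles "line up" to produce the pole at $s = c(A)$.

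Next I would handle the class $[\mathcal{A}(d)_s]$. Writing the Chevalley decomposition $0 \to T_d\times_k U_d \to \mathcal{A}(d)_s^o \to B_d \to 0$, for $d$ divisible by $e$ one has $U_d = 0$ (semi-abelian reduction), $B_d = B$ fixed, and $T_d$ a torus of dimension $t_{pot}(A)$, so $[\mathcal{A}(d)_s^o] = [B](\LL - 1)^{t_{pot}(A)}$ using that a split torus of rank $r$ has class $(\LL-1)^r$. Thus $[\mathcal{A}(d)_s] = |\Phi_{A(d)}|\cdot [B]\cdot(\LL-1)^{t_{pot}(A)}$, and the only $d$-dependence left is in the order of the component group $\Phi_{A(d)}$. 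The asymptotic/periodic behaviour of $|\Phi_{A(d)}|$ as $d$ ranges over multiples of $e$ is again controlled by the tame base change data — it grows polynomially in $d$ of degree $t_{pot}(A)$ by the theory of \cite{HaNi-comp} (each of the $t_{pot}(A)$ toric slopes contributes a linear-in-$d$ factor to the component group). Combining, $Z_A(\LL^{-s})$ becomes, up to the polynomial factor $[B](\LL-1)^{t_{pot}(A)}$ and a finite sum over residue classes mod $e$, a sum of series of the shape $\sum_d P(d)\LL^{\sum\lceil d a_i\rceil - ds}$ with $P$ a polynomial of degree $t_{pot}(A)$; such a series is rational with a unique pole at $s = \max_i a_i$, and the multiplicity of that pole is $1 + (\text{number of }i\text{ with }a_i\text{ maximal}) + \deg P$, which I expect to work out to exactly $t_{pot}(A) + 1$ once one checks that the maximal slope equals $c(A)/(\text{something})$ — here one must verify that the slopes are arranged so that the single largest pole is at $s = c(A)$ with the stated order. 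This last bookkeeping, reconciling the pole order $t_{pot}(A)+1$ with the interplay between the $\lceil\cdot\rceil$ summands and the polynomial growth of $|\Phi_{A(d)}|$, is the main obstacle and is where the precise results of \cite{HaNi-comp,HaNi-zeta} are indispensable; the delicate point is that the contributions do not cancel but rather reinforce to give exactly order $t_{pot}(A)+1$.

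For part (2), the strategy is to express $c(A)$ directly in terms of the tame monodromy operator $\sigma$ acting on $H^g(A\times_K K^t,\Q_\ell) \cong \wedge^g H^1(A\times_K K^t,\Q_\ell)$. Since $A$ is tamely ramified, $\sigma$ acts on $H^1$ semisimply with eigenvalues that are roots of unity; more precisely, by the theory relating the tame action to the slopes, the eigenvalues of $\sigma$ on $H^1$ are $\exp(2\pi i a_j)$ and $\exp(-2\pi i a_j)$-type values determined by the same base change slopes $a_i$ appearing above (the $0$-slopes give eigenvalue $1$, corresponding to the good-reduction part $B$). Then an eigenvalue of $\sigma$ on $\wedge^g H^1$ is a product of $g$ eigenvalues of $\sigma$ on $H^1$; choosing the product corresponding to the multiset $\{a_1,\dots,a_g\}$ (after orienting each pair appropriately) yields the eigenvalue $\exp(2\pi i \sum a_i) = \exp(2\pi i\, c(A))$. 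One then checks this eigenvalue genuinely occurs — i.e.\ the relevant wedge of eigenvectors is nonzero — which is automatic since we are taking a top exterior power and picking one eigenvector from each factor. Finally, $\exp(2\pi i\, c(A))$ being an eigenvalue means its minimal polynomial over $\Q$, which is $\Phi_{\tau(c(A))}(t)$, divides the characteristic polynomial of $\sigma$ on $H^g$; this gives the cyclotomic divisibility statement and hence, via any embedding $\Q_\ell \hookrightarrow \C$, the monodromy eigenvalue conclusion. I do not expect serious obstacles in part (2) beyond correctly identifying the eigenvalues of $\sigma$ on $H^1$ with the base change slopes, which is again supplied by the earlier papers.
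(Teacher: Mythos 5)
Your structural outline matches the paper's actual strategy closely: Chevalley decomposition of $\mathcal{A}(d)_s^o$ as in Proposition \ref{prop-refine}, Edixhoven's theory of jumps to control $\ord_A$ and the variation of the torus/unipotent/abelian parts under tame base change, periodicity of the invariants modulo $e$, polynomial growth of $\phi_A(d)$ from \cite{HaNi-comp}, and exterior powers of $H^1$ for part (2). However, there are two concrete errors in your proposed bookkeeping that would steer the computation to the wrong answer.

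First, you claim ``exactly $t_{pot}(A)$ of the $a_i$ [are] equal to a nonzero value.'' This is false: the number of nonzero jumps in Edixhoven's filtration is tied to the \emph{unipotent} rank of $\mathcal{A}_s^o$, not to the potential toric rank $t_{pot}(A)$. An elliptic curve with additive, potentially good reduction has $t_{pot}(A)=0$ but $c(A)>0$, hence a nonzero jump. This means your attempt to make the $a_i$'s simultaneously account for the pole location \emph{and} its multiplicity is built on a wrong dictionary.

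Second, and more seriously, your pole analysis does not reproduce the stated result. You write $\ord_A(d) = \sum_i \lceil d a_i \rceil$ and then reason as if the generating series factored into independent geometric series with poles at $s = a_i$, concluding that the pole sits at $s = \max_i a_i$ with order $1 + \#\{i : a_i\text{ maximal}\} + \deg P$. But $\sum_d \LL^{\sum_i \lceil d a_i\rceil} T^d$ is a single series whose exponent is affine in $d$ on each residue class mod $e$, with slope $\sum_i a_i = c(A)$; it has a simple pole at $s = c(A)$, not at $\max_i a_i$, and the individual $a_i$'s contribute nothing extra to the multiplicity. The order $t_{pot}(A)+1$ comes \emph{entirely} from the polynomial factor $\phi_A(d)\sim d^{t_{pot}(A)}$, not from any alignment of ``maximal slopes.'' Indeed the paper explicitly separates these two roles: the order function $\ord_A$ (affine with slope $c(A)$ on residue classes mod $e$) pins down the location of the pole, while the component-group count $\phi_A(d)=\phi(A)\cdot d^{t(A)}$ controls the order. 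Your formula would overcount the multiplicity by $\#\{i : a_i\text{ maximal}\}$. You flag this as ``the main obstacle,'' but what is needed is not delicate reconciliation of your ansatz — it is to discard the product-of-geometric-series picture in favor of the affine-slope picture.

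Part (2) is essentially the paper's argument: use Edixhoven's description of the tame Galois action on $\Lie(\mathcal{A}(e)_s^o)$ to read off the eigenvalues of $\sigma$ on $H^1$, and realize $\exp(2\pi i\,c(A))$ on the top wedge $H^g \cong \wedge^g H^1$. The one point you gloss over (and which requires real argument) is that $H^1$ has dimension $2g$, with eigenvalues occurring in $2g$ values, not $g$; one must verify that a suitable selection of $g$ of them multiplies to $\exp(2\pi i\,c(A))$ and that the corresponding wedge of eigenvectors is nonzero. Your phrase ``after orienting each pair appropriately'' is where that argument lives, and it is not automatic.
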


We'll briefly sketch the main ideas of the proof. The first step
is to refine the expression for the motivic zeta function, as
follows. For every $d$ in $\N'$, we denote by
 $t_A(d)$ and $u_A(d)$ the toric, resp. unipotent rank of $A(d)$, and we denote by $B_A(d)$ the abelian quotient
 in the Chevalley decomposition of
$\mathcal{A}(d)_s^{o}$. Moreover, we denote by $ \phi_A(d) =
|\Phi_{A(d)}| $ the number of connected components of the
$k$-group $\mathcal{A}(d)_s$.

\begin{prop}\label{prop-refine}
For every abelian $K$-variety $A$, we have \begin{eqnarray*}Z_A(T)
&=& \sum_{d \in \mathbb{N}'} [\mathcal{A}(d)_s]
\mathbb{L}^{ord_A(d)} T^d \\ &=& \sum_{d \in \mathbb{N}'} \left(
\phi_A(d) \cdot (\mathbb{L}-1)^{t_A(d)} \cdot \mathbb{L}^{u_A(d) +
ord_A(d)} \cdot [B_A(d)] T^d \right) \end{eqnarray*} in $
\mathcal{M}_k[[T]] $.
\end{prop}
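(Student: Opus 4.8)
The plan is to unwind the class $[\mathcal{A}(d)_s]$ in $\mathcal{M}_k$ using the structure of the special fiber of the N\'eron model. First I would recall that $\mathcal{A}(d)_s$ is a disjoint union of $\phi_A(d)$ copies of the identity component $\mathcal{A}(d)_s^o$, since $k$ is algebraically closed, so by the scissor relations $[\mathcal{A}(d)_s] = \phi_A(d)\cdot [\mathcal{A}(d)_s^o]$ in $\mathcal{M}_k$.

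Next I would compute $[\mathcal{A}(d)_s^o]$ from the Chevalley decomposition \eqref{eq-Che}, which for $A(d)$ reads $0\to T_{A(d)}\times_k U_{A(d)}\to \mathcal{A}(d)_s^o\to B_A(d)\to 0$, where $T_{A(d)}$ is a torus of dimension $t_A(d)$ and $U_{A(d)}$ is a unipotent group of dimension $u_A(d)$. The key observation is that this is a torsor under the affine group $T_{A(d)}\times_k U_{A(d)}$ over the abelian variety $B_A(d)$, and such a torsor is Zariski-locally trivial on $B_A(d)$: a torus torsor over any variety is Zariski-locally trivial since $H^1_{Zar}=H^1_{\mathrm{et}}$ for $\mathbb{G}_m$, and a unipotent group over a perfect field admits a filtration with successive quotients $\mathbb{G}_a$, for which torsors are always Zariski-locally trivial. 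Hence $[\mathcal{A}(d)_s^o] = [B_A(d)]\cdot[T_{A(d)}]\cdot[U_{A(d)}]$ in $\mathcal{M}_k$ (the class of a Zariski-locally trivial fibration is the product of base and fiber classes, by stratifying the base and applying scissor relations on each piece). Finally $[T_{A(d)}] = (\LL-1)^{t_A(d)}$ because a split torus is a product of copies of $\mathbb{G}_m = \A^1\setminus\{0\}$, and every torus over an algebraically closed field is split; and $[U_{A(d)}] = \LL^{u_A(d)}$ since a connected unipotent group over a perfect field is, as a variety, isomorphic to affine space (again by the $\mathbb{G}_a$-filtration and local triviality). Combining, $[\mathcal{A}(d)_s] = \phi_A(d)\cdot(\LL-1)^{t_A(d)}\cdot\LL^{u_A(d)}\cdot[B_A(d)]$; multiplying through by $\LL^{\ord_A(d)}T^d$ and summing over $d\in\N'$ yields the claimed second expression, the first being just Definition \ref{def-abzeta}.

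The main obstacle, such as it is, is justifying that the class of the total space of the torsor $\mathcal{A}(d)_s^o \to B_A(d)$ is the product of the classes of base and fiber; this rests on Zariski-local triviality of the torsor, which in turn needs the splitness of $T_{A(d)}$ and the special structure of unipotent groups over a perfect field. I would state this as a short lemma (or cite the standard fact that for a Zariski-locally trivial fibration $Y\to Z$ with fiber $F$ one has $[Y]=[Z][F]$ in the Grothendieck ring, obtained by choosing a trivializing stratification of $Z$) rather than spelling out the filtration argument in full. The identity $[\mathbb{G}_m] = \LL - 1$ and $[\A^n] = \LL^n$ are immediate from the scissor relations. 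No subtlety arises from passing to the modified Grothendieck ring, since here the coefficient ring is $\mathcal{M}_k$ itself and $k$ is algebraically closed, hence perfect.
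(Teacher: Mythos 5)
Your proof is correct and follows essentially the same route as the paper's: split off $\phi_A(d)$ copies of the identity component using that $k$ is algebraically closed, then use the Chevalley decomposition and Zariski-local triviality of the resulting $(T\times U)$-torsor over $B_A(d)$ to factor $[\mathcal{A}(d)_s^o]$ as $(\LL-1)^{t_A(d)}\LL^{u_A(d)}[B_A(d)]$. The only difference is that you spell out the justification for local triviality (Hilbert~90 for the torus part, the $\mathbb{G}_a$-filtration for the unipotent part), which the paper states without elaboration.
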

\begin{proof}
The first equality is simply the definition of the zeta function.
For every $d\in \N'$, the connected components of
$\mathcal{A}(d)_s$ are all isomorphic to $\mathcal{A}(d)_s^o$,
because $k$ is algebraically closed. Thus by the scissor relations
in the Grothendieck ring, one has
$$[\mathcal{A}(d)_s]=\phi_A(d)\cdot [\mathcal{A}(d)_s^o].$$
Now consider the Chevalley decomposition
$$0\to T_A(d)\times_k U_A(d)\to \mathcal{A}(d)_s^o\to B_A(d)\to
0$$ of $\mathcal{A}(d)_s^o$. The torus $T_A(d)$ is isomorphic to
$\mathbb{G}_{m,k}^{t_A(d)}$, and $U_A(d)$ is a successive
extension of additive groups $\mathbb{G}_{a,k}$. It follows that
$\mathcal{A}(d)_s^o\to B_A(d)$ is a Zariski-locally trivial
fibration. Moreover, as a $k$-variety, $U_A(d)$ is isomorphic to
$\A^{u_A(d)}_k$. Thus $$[\mathcal{A}(d)_s^{o}] =
(\mathbb{L}-1)^{t_A(d)} \cdot \mathbb{L}^{u_A(d)} \cdot [B_A(d)]
$$ in $ \mathcal{M}_k $.
\end{proof}
Thus the study of $Z_A(T)$ can be split up into the following
subproblems:
\begin{enumerate}
\item How do $t_A(d)$, $u_A(d)$ and $B_A(d)$ vary with $d$? \item
What is the shape of the order function $\ord_A$?  \item How does
$\phi_A(d)$ vary with $d$?
\end{enumerate}

Our main tool in the study of (1) was a theorem due to B.
Edixhoven \cite{Edix}, which says that for every $d\in \N'$, the
N\'eron model $\mathcal{A}$ is canonically isomorphic to the fixed
locus of the $G(K(d)/K)$-action on the Weil restriction of
$\mathcal{A}(d)$ to $R$. This result enabled us to show that
 $t_A(d)$, $u_A(d)$ and $[B_A(d)]$ only depend on the residue class of $d$ modulo $e$, where $e$ is the degree
 of the minimal extension of $K$ where $A$ acquires semi-abelian
 reduction. In the same paper, Edixhoven constructs a filtration
 on $\mathcal{A}_s$ by closed algebraic subgroups, indexed by $\Q\cap [0,1]$. This filtration measures the behaviour of the
 N\'eron model of
 $A$ under tamely ramified base change. The {\em jumps} of $A$ are the indices where the subgroup changes. Edixhoven related
 these
 jumps
 to the Galois action of $G(K(e)/K)=\mu_e(k)$ on the
 $k$-vector space $\Lie(\mathcal{A}(e)_s^o)$.
 We deduced from Edixhoven's
 theory that $c(A)$ is the sum of the jumps of $A$ and that on every residue class of $\N'$ modulo $e$, the function $\ord_A$ is
 affine with slope $c(A)$. The function $\ord_A$ is responsable
 for the pole of $Z_A(\LL^{-s})$ at $s=c(A)$.

To control the behaviour of $\phi_A(d)$ turned out to be rather
involved, we treated this in the separate paper \cite{HaNi-comp}.
There, we used rigid uniformization of $A$ in the sense of
\cite{BoXa} to reduce to the case of tori and abelian varieties
with potential good reduction, where more explicit methods could
be used to describe the change in the component groups under
ramified base extensions. In this way, we obtained the following
result.
\begin{theorem}
Let $A$ be an abelian $K$-variety, and let $e$ be the degree of
the minimal extension of $K$ where $A$ acquires semi-abelian
reduction. Denote by $t(A)$ the toric rank of $A$ and by $\phi(A)$
the number of connected components of the N\'eron model of $A$.
Assume either that $A$ is tamely ramified or that $A$ has
potential purely multiplicative reduction. Then for every element
$d$ of $\N'$ that is prime to $e$, we have
$$\phi_A(d) = \phi(A) \cdot d^{t(A)}.$$
\end{theorem}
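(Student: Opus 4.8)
The plan is to pass to Raynaud's non-archimedean uniformization of $A$ and thereby reduce the computation of $\phi_A(d)=|\Phi_{A(d)}|$ to the cases of tori and of abelian varieties with potential good reduction, for each of which $\Phi_{A(d)}$ admits an explicit description. Since $k$ is algebraically closed, the uniformization of \cite{BoXa} gives an exact sequence of rigid $K$-groups
\begin{equation*}
0\to M\to E^{\rig}\to A^{\rig}\to 0,
\end{equation*}
in which $E$ is a semi-abelian $K$-variety, fitting into an extension $0\to T\to E\to B\to 0$ with $T$ a torus and $B$ an abelian variety of potential good reduction, and $M$ is a free $\Z$-module of rank $t_{pot}(A)=\dim T$ carrying a continuous action of $G(K^s/K)$. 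All of this is compatible with base change: tensoring with $K(d)$ produces the uniformization of $A(d)$, with $E(d)$ and $B(d)$ the base changes of $E$ and $B$ and the same lattice $M$, now viewed over $K(d)$. Combined with the description in \cite{BoXa} of $\Phi_A$ in terms of the connected N\'eron models of $T$ and $B$, the lattice $M$, and the monodromy/valuation pairing attached to $M$, this reduces the problem to understanding how each of these ingredients changes under base change by $d\in\N'$ prime to $e$.

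For the purely toric case --- which in particular covers potential purely multiplicative reduction, where $B=0$, $T$ has dimension $g$ and $M$ has rank $g$ --- one describes $\Phi_A$ from the cocharacter lattice $Y$ of $T$ together with its inertia action, which factors through the finite quotient $\mathrm{Gal}(L/K)$ of order $e$ (with $L$ the minimal field over which $T$ splits and $A$ becomes semistable), and the valuation pairing coming from $M\hookrightarrow T^{\rig}$; one has $t(A)=\mathrm{rank}_{\Z}(Y^{I})$, where $I$ denotes the inertia group. Under base change by $d$ prime to $e$: first, since $[L:K]=e$ and $[K(d):K]=d$ are coprime one has $L\cap K(d)=K$, so $I_{K(d)}$ still acts on $Y$ through $\mathrm{Gal}(L/K)$; hence $Y^{I}$, and with it $t_A(d)=t(A)$, is unchanged, as is every contribution to $\Phi_A$ governed by the non-trivial part of the inertia action. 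Second, the normalized valuation on $K(d)$ is $d$ times that on $K$, so the valuation pairing is multiplied by $d$; since it is non-degenerate on the rank-$t(A)$ inertia-invariant part, this multiplies by exactly $d^{t(A)}$ the part of $\Phi_A$ that detects the pairing. Altogether $\phi_A(d)=\phi(A)\cdot d^{t(A)}$.

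For the case of potential good reduction we have $t_{pot}(A)=0$, hence $M=0$, $T=0$ and $t(A)=0$, and the claim becomes $\Phi_{A(d)}\cong\Phi_A$ for $d$ prime to $e$. In the tamely ramified setting $e$ is prime to $p$, and Edixhoven's theorem \cite{Edix} recovers $\mathcal{A}$, and in particular $\Phi_A$, from the $\mu_e$-action on $\mathcal{A}(e)$ via the fixed locus of the Weil restriction. As $A(e)$ has good reduction, $\mathcal{A}(e)$ is an abelian scheme, and $\mathcal{A}(de)=\mathcal{A}(e)\times_{R(e)}R(de)$ is again an abelian scheme whose special fiber carries the same $\mu_e$-action (the residue extensions being trivial); applying the same fixed-locus recipe over $R(d)$ to $\mathcal{A}(de)$ then yields $\mathcal{A}(d)$ with $\Phi_{A(d)}\cong\Phi_A$. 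Feeding the two sub-cases into the gluing of \cite{BoXa}, and using that in the tamely ramified branch $T$ and $B$ inherit the tameness of $A$ so that the torus analysis and the good-reduction analysis each apply, gives the asserted formula in both cases of the theorem.

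The step I expect to be the main obstacle is the gluing: $\Phi_A$ is assembled from $\Phi_T$, $\Phi_B$ and $M$ through extensions that need not split, so one must control how every piece and every connecting map behaves under base change by $d$ and check that the factor $d^{t(A)}$ contributed by the toric directions and the factor $1$ contributed by $B$ combine without interference --- in particular that the component of $M$ landing in $\Phi_B$ produces no extra growth. A second delicate point is the purely toric case with wild ramification, where $\mathrm{Gal}(L/K)$ need not be cyclic and Edixhoven's tame description is unavailable, so that one must argue directly with N\'eron lft-models of $T$, carefully separating the rank-$t(A)$ inertia-invariant directions, on which the $d$-scaling of the pairing is visible, from the genuinely non-split directions, on which the inertia action already bounds the relevant contribution independently of $d$.
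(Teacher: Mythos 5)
Your strategy is exactly the one the paper outlines: pass to the Bosch--Xarles rigid uniformization $0\to M\to E^{\rig}\to A^{\rig}\to 0$ with $0\to T\to E\to B\to 0$, reduce the computation of component groups to the torus case and the potential-good-reduction case, and analyze how each ingredient (inertia action on the cocharacter lattice, valuation pairing scaled by $d$, Edixhoven's fixed-point description in the tame case) behaves under base change by $d$ prime to $e$. The obstacles you flag at the end --- the non-split gluing of $\Phi_T$, $\Phi_B$ and $M$, and the wildly ramified purely-multiplicative case where one must work directly with N\'eron lft-models of tori --- are precisely the ``rather involved'' points the authors defer to \cite{HaNi-comp}.
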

This result was sufficient for our purposes. The behaviour of
$\phi_A(d)$ is responsible for the order $t_{\mathrm{pot}}(A)+1$
of the unique pole of the zeta function.

It remains to prove the relation between the base change conductor
and the tame monodromy action on $H^g(A\times_K K^t,\Q_\ell)$.
Here we again used Edixhoven's theory and we showed how to compute
the eigenvalues of $\sigma$ on $H^1(A\times_K K^t,\Q_\ell)$ from
the Galois action of $\mu_e(k)$ on $\Lie(\mathcal{A}(e)_s^o)$.


\subsection{Strong version of the monodromy conjecture}
It is natural to ask for an analog of Conjecture
\ref{conj-strongmc} for abelian varieties. There is no good notion
of Bernstein polynomials in this setting. However, the
multiplicities of the roots of the Bernstein polynomial of a
complex hypersurface singularity are closely related to the sizes
of the Jordan blocks of the monodromy action on the cohomology of
the Milnor fiber, so one may ask if the order of the pole of
$Z_A(T)$ is related to Jordan blocks of the tame monodromy action
on $H^g(A\times_K K^t,\Q_\ell)$. We've shown in
\cite{HaNi-jumpmon} that this is indeed the case.

\begin{theorem}\label{thm-Jor}
Let $A$ be a tamely ramified abelian $K$-variety of dimension $g$.
For every tame monodromy operator $\sigma$ in $G(K^t/K)$ and every
embedding of $\mathbb{Q}_{\ell}$ in $\mathbb{C}$, the value $
\alpha = \exp(2 \pi c(A) i) $ is an eigenvalue of $\sigma$ on
$H^g(A \times_K K^t, \mathbb{Q}_{\ell}) $. Each Jordan block of
$\sigma$ on $H^g(A \times_K K^t, \mathbb{Q}_{\ell}) $ has size at
most $t_{pot}(A) + 1$, and $\sigma$ has a Jordan block with
eigenvalue $ \alpha $ on $H^g(A \times_K K^t, \mathbb{Q}_{\ell}) $
with size $t_{pot}(A) + 1$.
\end{theorem}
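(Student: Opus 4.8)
The eigenvalue assertion in the statement is already contained in Theorem~\ref{thm-main}(2): since $\Phi_{\tau(c(A))}(t)$ divides the characteristic polynomial of $\sigma$ on $H^g(A\times_K K^t,\mathbb{Q}_{\ell})$, the number $\alpha=\exp(2\pi c(A)i)$ occurs as an eigenvalue for every embedding $\mathbb{Q}_{\ell}\hookrightarrow\mathbb{C}$. So the real content is the bound on the Jordan block sizes and the existence of a block of size $t_{pot}(A)+1$ with eigenvalue $\alpha$, and the plan is to reduce this to a statement about the first cohomology. Because $A$ is tamely ramified, the canonical map $H^g(A\times_K K^t,\mathbb{Q}_{\ell})\to H^g(A\times_K K^s,\mathbb{Q}_{\ell})$ is a $\sigma$-equivariant isomorphism, and for an abelian variety there is a $G(K^s/K)$-equivariant isomorphism $H^g(A\times_K K^s,\mathbb{Q}_{\ell})\cong\bigwedge^g H^1(A\times_K K^s,\mathbb{Q}_{\ell})$, where $W:=H^1(A\times_K K^s,\mathbb{Q}_{\ell})=(V_\ell A)^{\vee}$. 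Thus it suffices to determine the Jordan type of $\sigma$ on $W$ together with enough information on the eigenvalues, and then to compute the induced Jordan type on $\bigwedge^g W$.

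First I would record the structure of $\sigma$ on $W$. Write $\sigma=\sigma_s\sigma_u$ for the Jordan decomposition, with $\sigma_s$ of finite order dividing the degree $e$ of the minimal extension where $A$ acquires semi-abelian reduction, and $\sigma_u$ unipotent. By Grothendieck's monodromy theorem for abelian varieties one has $(\sigma_u-1)^2=0$; and from the structure of $V_\ell A$ over the semi-stable field $K(e)$ — the three-step monodromy filtration whose graded pieces are the toric part, the Tate module of the potential-good abelian part $B$ (with $\dim B=g-t_{pot}(A)$), and the period lattice, the monodromy identifying the two outer pieces (rigid uniformization, cf. \cite{BoXa}, \cite{HaNi-comp}) — one gets $\rank(\sigma_u-1)=t_{pot}(A)$. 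Hence on $W$ the operator $\sigma_u$ has exactly $t_{pot}(A)$ Jordan blocks of size $2$ and $2g-2t_{pot}(A)$ of size $1$. Using Edixhoven's description of the jumps $j_1,\dots,j_g$ of $A$, the relation $c(A)=\sum_\nu j_\nu$ (as in \cite{HaNi-zeta}), and the self-duality of $W$ afforded by the Weil pairing, one identifies the multiset of $\sigma_s$-eigenvalues on $W$ as $\{e^{2\pi i j_\nu},e^{-2\pi i j_\nu}\}_{\nu=1}^{g}$, with the $t_{pot}(A)$ non-semisimple ``slots'' accounted for by the toric jumps, in such a way that one may select one eigenvalue from each of the $g$ pairs — including all $t_{pot}(A)$ non-semisimple ones — whose product equals $e^{2\pi i\sum_\nu j_\nu}=\alpha$. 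Concretely, this product is $(\prod_s\beta_s)(\prod_j\gamma_j)$, where $\beta_1,\dots,\beta_{t_{pot}(A)}$ are the eigenvalues of $\sigma_s$ on the $t_{pot}(A)$ size-$2$ blocks and $\gamma_1,\dots,\gamma_{g-t_{pot}(A)}$ is a suitable choice of one eigenvalue from each remaining pair.

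Next, the upper bound. Choose a Jordan basis decomposition $W=\bigl(\bigoplus_{s=1}^{t_{pot}(A)}W_s\bigr)\oplus W'$, with $\dim W_s=2$ and $W'=\ker(\sigma_u-1)$ the sum of the size-$1$ blocks, so that $\bigwedge^{\bullet}W\cong\bigotimes_s\bigwedge^{\bullet}W_s\otimes\bigwedge^{\bullet}W'$. Let $N=\log\sigma_u$ and let $D$ be the induced even derivation of $\bigwedge^{\bullet}W$; then $D=\sum_s D_s$, where $D_s$ acts only on the $s$-th tensor factor and $D$ vanishes on the $\bigwedge^{\bullet}W'$ factor. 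A direct check on each four-dimensional factor $\bigwedge^{\bullet}W_s=\mathbb{Q}_{\ell}\oplus W_s\oplus\bigwedge^{2}W_s$ shows $D_s^{2}=0$, and the $D_s$ commute, so $D^{\,t_{pot}(A)+1}=0$ on all of $\bigwedge^{\bullet}W$, in particular on $\bigwedge^{g}W$. Since $\sigma_u$ acts on $\bigwedge^{g}W$ as $\exp(D)$, this shows that every Jordan block of $\sigma$ on $H^g(A\times_K K^t,\mathbb{Q}_{\ell})$ has size at most $t_{pot}(A)+1$.

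For the lower bound, choose bases $e_0^{(s)},e_1^{(s)}$ of $W_s$ with $Ne_1^{(s)}=e_0^{(s)}$ and $Ne_0^{(s)}=0$, and $\sigma_s$-eigenvectors $w_1,\dots,w_{g-t_{pot}(A)}\in W'$ with eigenvalues $\gamma_1,\dots,\gamma_{g-t_{pot}(A)}$ chosen as in the second paragraph so that $(\prod_s\beta_s)(\prod_j\gamma_j)=\alpha$, where $\beta_s$ is the $\sigma_s$-eigenvalue on $W_s$. Put $\Omega=e_1^{(1)}\wedge\cdots\wedge e_1^{(t_{pot}(A))}\wedge w_1\wedge\cdots\wedge w_{g-t_{pot}(A)}\in\bigwedge^{g}W$. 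Then $D^{\,t_{pot}(A)}\Omega=t_{pot}(A)!\,\cdot e_0^{(1)}\wedge\cdots\wedge e_0^{(t_{pot}(A))}\wedge w_1\wedge\cdots\wedge w_{g-t_{pot}(A)}$, which is nonzero because these $g$ vectors are linearly independent, while $D^{\,t_{pot}(A)+1}\Omega=0$. Hence $\sigma_u$ fixes $D^{\,t_{pot}(A)}\Omega$, this vector lies in $\ker D\cap\operatorname{im}D^{\,t_{pot}(A)}$, and by the upper bound it is a combination of bottom vectors of Jordan blocks of size exactly $t_{pot}(A)+1$; moreover $\sigma_s$ scales it by $(\prod_s\beta_s)(\prod_j\gamma_j)=\alpha$, so at least one of those blocks has eigenvalue $\alpha$. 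Combined with the upper bound this proves the theorem. The main obstacle is the bookkeeping of the second paragraph: matching the Jordan structure of the tame monodromy on $H^1$ with Edixhoven's jump data precisely enough to exhibit an admissible selection of one eigenvalue per pair containing all $t_{pot}(A)$ non-semisimple slots and with product $\alpha$ — this is where rigid uniformization of $A$, the self-duality coming from the Weil pairing, and the identity $c(A)=\sum_\nu j_\nu$ have to be combined; the exterior-algebra manipulations are then purely formal.
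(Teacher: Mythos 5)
Your overall strategy is the one the authors use in the cited reference: reduce via the tameness isomorphism $H^g(A\times_K K^t,\Q_\ell)\cong H^g(A\times_K K^s,\Q_\ell)\cong\bigwedge^g W$ with $W=H^1(A\times_K K^s,\Q_\ell)$, invoke Grothendieck's monodromy theorem to get the Jordan decomposition $\sigma=\sigma_s\sigma_u$ with $(\sigma_u-1)^2=0$ and $\rank(\sigma_u-1)=t_{pot}(A)$ via rigid uniformization, and then transport this to $\bigwedge^g W$. The formal exterior-algebra part of your argument is correct and rather clean: passing to $D=\log\sigma_u$, the decomposition $D=\sum_s D_s$ with $D_s^2=0$ and $[D_s,D_{s'}]=0$ gives $D^{\,t_{pot}(A)+1}=0$ on $\bigwedge^{\bullet}W$ by the pigeonhole principle, and the explicit $\Omega$ with $D^{\,t_{pot}(A)}\Omega\neq 0$ gives the lower bound on the Jordan block size. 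That part I am happy with.

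The genuine gap is exactly where you say it is, namely the second paragraph, and I want to insist that it is not just ``bookkeeping'' but the mathematical substance of the theorem. You assert that the multiset of $\sigma_s$-eigenvalues on $W$ is $\{e^{2\pi ij_\nu},e^{-2\pi ij_\nu}\}_{\nu=1}^g$ and that one can select one eigenvalue from each of the $g$ pairs, including all $t_{pot}(A)$ size-two slots, so that the product is $\alpha=e^{2\pi i c(A)}$. Two things need to be proved here, and neither follows from self-duality alone. First, each size-two $\sigma_u$-block straddles the two halves of the duality: via the weight filtration from uniformization, its ``top'' vector $e_1^{(s)}$ represents a class in $\mathrm{gr}^W_2$ while the ``bottom'' $e_0^{(s)}$ represents a class in $\mathrm{gr}^W_0$, and one must show that the common $\sigma_s$-eigenvalue $\beta_s$ therefore appears once among the ``$F^1$-side'' jumps and once among the ``non-$F^1$-side'' ones, i.e.\ $\beta_s=e^{2\pi ij_{\nu_s}}=e^{-2\pi ij_{\mu_s}}$ with $j_{\nu_s}+j_{\mu_s}\in\Z$. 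Second, one has to check that these $t_{pot}(A)$ ``$F^1$-side'' indices $\nu_s$ are distinct and that the remaining $g-t_{pot}(A)$ pairs supply $\sigma_u$-fixed eigenvectors $w_j$ with eigenvalues $e^{2\pi ij_{\mu}}$ (one per leftover $\mu$), which is what lets you conclude $\prod_s\beta_s\prod_j\gamma_j=e^{2\pi i\sum_\nu j_\nu}=\alpha$. Both points hinge on the precise compatibility between Edixhoven's jump data (read off from the $\mu_e(k)$-action on $\Lie(\mathcal{A}(e)_s^o)$), the monodromy weight filtration on $W$, and the Weil pairing; this compatibility is exactly what the paper establishes (in characteristic zero via the limit mixed Hodge structure), and without it the lower bound on the Jordan block size with eigenvalue $\alpha$ is not proved. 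Until that is done, your argument yields the upper bound and the existence of a size-$(t_{pot}(A)+1)$ block, but not that such a block can be taken to have eigenvalue $\alpha$.
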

 In the case $K=\C((t))$, we also gave in \cite{HaNi-jumpmon} a Hodge-theoretic
 interpretation of the jumps in Edixhoven's filtration, in
 terms of the limit mixed Hodge structure associated to $A$.

\subsection{Cohomological interpretation}
The motivic zeta function of an abelian $K$-variety admits a
cohomological interpretation, by \cite{Ni-abelian}. We consider
the unique ring morphism
$$\chi:\mathcal{M}_k\to \Z$$ that sends the class of a $k$-variety
$X$ to the $\ell$-adic Euler characteristic
$$\chi(X)=\sum_{i\geq 0}(-1)^i\mathrm{dim}\,H^i_c(X,\Q_\ell).$$ Since $\chi$ sends $\LL$ to $1$, the
image of $Z_A(T)$ under the morphism $\mathcal{M}_k[[T]]\to
\Z[[T]]$ induced by $\chi$ is equal to
$$\chi(Z_A(T))=\sum_{d\in \N'}\chi(\mathcal{A}(d)_s)T^d.$$

\begin{theorem}
Let $A$ be a tamely ramified abelian $K$-variety. For every $d$ in
$\N'$, we have
$$\sum_{i\geq 0}(-1)^i \mathrm{Trace}(\sigma^d\,|\,H^i(A\times_K
K^t,\Q_\ell))=\chi(\mathcal{A}(d)_s).$$ This value equals
$\phi_A(d)$ if $A(d)$ has purely additive reduction (i.e, if
$\mathcal{A}(d)_s^o$ is unipotent) and it equals zero in all other
cases.
\end{theorem}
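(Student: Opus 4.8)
The claim concerns the tamely ramified abelian variety $A(d)$ over $K(d)$: the element $\sigma^d$ is a topological generator of $G(K^t/K(d))$, hence a tame monodromy operator for $K(d)$; $A(d)$ is again tamely ramified, so $H^i(A\times_K K^t,\Q_\ell)\cong H^i(A(d)\times_{K(d)}K^t,\Q_\ell)$; and, every N\'eron model being also a weak N\'eron model, $\mathcal{A}(d)$ is a weak N\'eron model of $A(d)$. The plan is therefore to prove two things: first, the trace formula $\sum_i(-1)^i\mathrm{Trace}(\sigma^d\,|\,H^i(A(d)\times_{K(d)}K^t,\Q_\ell))=\chi(\mathcal{A}(d)_s)$; and second, that $\chi(\mathcal{A}(d)_s)$ equals $\phi_A(d)$ when $\mathcal{A}(d)_s^o$ is unipotent and $0$ otherwise.

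The second point is elementary: I would apply the Euler characteristic ring morphism $\chi\colon\mathcal{M}_k\to\Z$ to the identity $[\mathcal{A}(d)_s]=\phi_A(d)\cdot(\LL-1)^{t_A(d)}\cdot\LL^{u_A(d)}\cdot[B_A(d)]$ established in the proof of Proposition \ref{prop-refine}. Since $\chi(\LL)=1$, and $\chi([B_A(d)])$ equals $1$ if $\dim B_A(d)=0$ and $0$ otherwise (the cohomology of a positive-dimensional abelian variety being an exterior algebra on an even number of degree-one classes), this gives $\chi(\mathcal{A}(d)_s)=\phi_A(d)\cdot 0^{t_A(d)}\cdot\chi([B_A(d)])$, which is $\phi_A(d)$ precisely when $t_A(d)=\dim B_A(d)=0$, i.e. when $\mathcal{A}(d)_s^o$ is unipotent (purely additive reduction of $A(d)$), and $0$ in every remaining case.

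The first point carries the real content. The most direct route is to invoke it: since $\mathcal{A}(d)$ is a weak N\'eron model of $A(d)$, the trace formula is exactly the statement of \cite{Ni-abelian}, which can be viewed as the tamely ramified case of the general trace formula for rigid varieties of \cite{Ni-trace} applied to a weak N\'eron model, identifying the alternating trace of a tame monodromy operator on $\ell$-adic cohomology with the Euler characteristic of the special fiber. For a more hands-on argument one would use the canonical isomorphisms $H^i(A(d)\times_{K(d)}K^t,\Q_\ell)\cong\wedge^i H^1(A(d)\times_{K(d)}K^t,\Q_\ell)$ to rewrite the left-hand side as $\det\bigl(1-\sigma^d\,|\,H^1(A(d)\times_{K(d)}K^t,\Q_\ell)\bigr)$. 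By Grothendieck's description of the inertia action on the Tate module \cite[IX]{sga7a}, the inertia-invariants of the $\ell$-adic Tate module of $A(d)$ form the Tate module of $\mathcal{A}(d)_s^o$, of rank $t_A(d)+2\dim B_A(d)$; the $\sigma^d$-invariants of $H^1$ have the same dimension, so this determinant vanishes exactly when $\mathcal{A}(d)_s^o$ is not unipotent, matching $\chi(\mathcal{A}(d)_s)=0$, while in the purely additive case one is reduced to proving $\det\bigl(1-\sigma^d\,|\,H^1(A(d)\times_{K(d)}K^t,\Q_\ell)\bigr)=\phi_A(d)$.

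This last identity is the genuine obstacle; the vanishing cases are formal, a nonzero $\sigma^d$-fixed class in $H^1$ killing both sides at once. Its proof requires either the machinery behind the general trace formula of \cite{Ni-trace}, or, if one insists on staying with N\'eron models, a precise description of the variation of the component groups under tame base change, which is the analysis carried out in \cite{HaNi-comp} on the basis of Edixhoven's filtration \cite{Edix}; for elliptic curves it reduces to the classical computation of the component groups attached to the additive Kodaira types.
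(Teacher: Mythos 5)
Your approach matches the paper's: the paper itself proves nothing here, simply citing \cite{Ni-abelian} (and the general trace formula of \cite{Ni-trace,Ni-tracevar,Ni-saito}) for the equality of the alternating trace with $\chi(\mathcal{A}(d)_s)$, exactly as you do, and the second assertion follows, as you observe, by applying the ring morphism $\chi$ to the refined expression $[\mathcal{A}(d)_s]=\phi_A(d)(\LL-1)^{t_A(d)}\LL^{u_A(d)}[B_A(d)]$. Your further reduction of the left-hand side to $\det(1-\sigma^d\,|\,H^1)$ and the dimension count via $T_\ell\mathcal{A}(d)_s^o$ is correct and usefully isolates the only non-formal case (purely additive reduction), where, as you rightly flag, the content of the trace formula or the component-group analysis of \cite{HaNi-comp,Edix} is genuinely needed.
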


This result can be seen as a particular case of a more general
theory that expresses a certain motivic measure for the number of
rational points on a $K$-variety $X$ in terms of the Galois action
on the $\ell$-adic cohomology of $X$; see
\cite{Ni-trace,Ni-tracevar, Ni-saito}. For a similar formula for
the zeta function of a hypersurface singularity, see
\cite[1.1]{DL-Lefschetz}, \cite[9.12]{NiSe-Inv} and
\cite[9.9]{Ni-trace}.

\section{Degenerations of Calabi-Yau varieties}\label{sec-degcy}
We keep the notations from Section \ref{subsec-not}. To simplify
the presentation, we assume that $k$ has characteristic zero. Part
of the theory below can be developed also in the case where $k$
has positive characteristic; see \cite{HaNi-zetav3}. In
particular, the definition of the zeta function remains valid.

\subsection{Motivic zeta functions of Calabi-Yau
varieties}\label{subsec-motCY}

\begin{definition}\label{def-cy}
A Calabi-Yau variety over a field $F$ is a smooth, proper,
geometrically connected $F$-variety with trivial canonical sheaf.
\end{definition}
For instance, every abelian variety is Calabi-Yau. By definition,
every Calabi-Yau variety admits a gauge form. In the definition of
a Calabi-Yau variety $X$, one often includes the additional
condition that $h^{i,0}(X)$ vanishes for $0<i<\mathrm{dim}\,X$. We
do not impose this condition.

Let $X$ be a Calabi-Yau variety over $K$. We will now define the
motivic zeta function $Z_X(T)$ of $X$, in a way that generalizes
our construction for abelian varieties. There is no canonical weak
N\'eron model as in the abelian case, but we can generalize the
expression for the zeta function in Proposition \ref{prop-mothaar}
in terms of the
 motivic volume of an appropriate gauge form on $X$.
We first show how the notion of distinguished gauge form extends
to Calabi-Yau varieties.

\begin{prop}\label{prop-mu}
Let $X$ be a Calabi-Yau variety over $K$, and let $\omega$ be a
gauge form on $X$. Then for every weak N\'eron model $\X$ of $X$,
the value
\begin{eqnarray*}
\ord(X,\omega)&:=&\mathrm{min}\,\{ord_C(\omega)\,|\,C\in
\pi_0(\X_s)\}\quad \in \Z\cup \{-\infty\}
\end{eqnarray*}
only depends on the pair $(X,\omega)$, and not on $\X$. By
convention, we set $\min \emptyset=-\infty$.
\end{prop}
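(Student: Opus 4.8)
The plan is to recover the integer $\ord(X,\omega)$ from the motivic integral $\int_X|\omega|\in\mathcal{M}^R_k$, which is \emph{already} known to be independent of the weak N\'eron model (this is the Loeser--Sebag result recorded, in the algebraic setting, in Proposition \ref{prop-motintalg}; such models exist because $X$ is proper, by Proposition \ref{prop-comparweakner}). Indeed, if $\X$ is any weak N\'eron model of $X$ and $m=\dim X$, then
$$\LL^{m}\,\int_{X}|\omega|\;=\;\sum_{C\in\pi_0(\X_s)}[C]\,\LL^{-\ord_C\omega}\qquad\text{in }\mathcal{M}^R_k ,$$
and here the summand carrying the \emph{highest} power of $\LL$ is precisely the one assembled from the components $C$ along which $\ord_C\omega$ is minimal, i.e. along which $\ord_C\omega=\ord(X,\omega)$. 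So it is enough to show that this ``leading power of $\LL$'' is intrinsic to the element $\int_X|\omega|$, and to read $\ord(X,\omega)$ off from it.

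First I would clear away the degenerate case. If $\pi_0(\X_s)=\emptyset$ for one weak N\'eron model, then $X(K^{\sh})=\emptyset$ (Section \ref{sec-motrig}), hence $\pi_0(\X_s)=\emptyset$ for \emph{every} weak N\'eron model, and then $\ord(X,\omega)=\min\emptyset=-\infty$ no matter which $\X$ we pick; so from now on assume $\X_s\neq\emptyset$. Since $X$ is Calabi--Yau it is smooth and geometrically connected, hence irreducible of pure dimension $m$; and $\X$, being $R$-flat of finite type with generic fibre $X$, has $\X_s$ of pure dimension $m$, and, being smooth over $k$, has each connected component $C$ of $\X_s$ irreducible of dimension exactly $m$.

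Now I would extract the invariant. Let $P\colon\mathcal{M}_k\to\Z[t,t^{-1}]$ be the ring homomorphism furnished by the virtual Poincar\'e (weight) polynomial of $k$-varieties: it satisfies $P(\LL)=t^2$, and --- this is the crucial point --- for every nonempty $k$-variety $V$ the polynomial $P([V])$ has degree $2\dim V$ with strictly positive leading coefficient (because the top compactly supported cohomology of an irreducible $d$-dimensional variety is one-dimensional and pure of weight $2d$). We are in equal characteristic zero, so $\mathcal{M}^R_k=\mathcal{M}_k$ and $P$ is available. Apply $P$ to the displayed identity and group the components of $\X_s$ by the value of $\ord_C\omega$: letting $\gamma_0<\gamma_1<\cdots$ be the distinct values that occur, the right-hand side becomes $t^{-2m}\sum_i Q_i(t)\,t^{-2\gamma_i}$, where each $Q_i\in\Z[t]$ is a nonempty finite sum of polynomials $P([C])$ of degree $2m$ with positive leading coefficient, so that $Q_i$ itself has degree $2m$ with positive leading coefficient. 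The $i$-th summand then has degree $2m-2\gamma_i$; these degrees are pairwise distinct, the largest being $2m-2\gamma_0$, and its coefficient --- the leading coefficient of $Q_0$ --- is positive, hence not cancelled by anything. Therefore $\deg P\!\big(\int_X|\omega|\big)=-2\gamma_0=-2\,\ord(X,\omega)$. Since $\int_X|\omega|$, and hence the integer $\deg P\!\big(\int_X|\omega|\big)$, depends only on the pair $(X,\omega)$, so does $\ord(X,\omega)$, which is what we wanted.

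The one step that genuinely goes beyond formal manipulation with the Loeser--Sebag formula is the use of $P$. The ring $\mathcal{M}_k$ is not an integral domain --- in fact $\LL$ is a zero divisor in $K_0(\Var_k)$ --- so there is no naive notion of ``degree in $\LL$'' to lean on; the virtual Poincar\'e polynomial side-steps this by landing in the domain $\Z[t,t^{-1}]$, and its positivity on classes of varieties is exactly what prevents the contributions of the (possibly several) components of minimal order from accidentally cancelling. I expect that pinning down this positivity property, or rather citing the correct reference for it, is the only delicate point; everything else is bookkeeping.
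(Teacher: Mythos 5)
Your argument is exactly the paper's: recover $\ord(X,\omega)$ from the motivic integral $\int_X|\omega|$, which Loeser--Sebag already show is model-independent, by reading off (minus, twice) the degree of the virtual Poincar\'e polynomial --- what the paper calls the \emph{virtual dimension}, citing \cite[\S8]{Ni-tracevar}. You spell out the positivity/no-cancellation step that makes the degree extraction work, which the paper leaves implicit in that citation, but the route is the same.
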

\begin{proof}
Since every connected component of $\X_s$ has the same dimension
as $X$, the value $\ord(X,\omega)$ is precisely minus the {\em
virtual dimension} of the motivic integral
$$\int_{X}|\omega|=\LL^{-\mathrm{dim}(X)}\sum_{C\in
\pi_0(\mathcal{X}_s)}[C]\LL^{-\ord_C\omega}\in \mathcal{M}_k.$$
The virtual dimension of an element $\alpha$ in $\mathcal{M}_k$
can be defined, for instance, as half of the degree of the {\em
Poincar\'e polynomial} of $\alpha$ \cite[\S8]{Ni-tracevar}.
\end{proof}

 Note that
$\ord(X,\omega)=-\infty$ if and only if $\X_s$ is empty, i.e., if
and only if $X(K)$ is empty.

\begin{definition}\label{def-disform}
Let $X$ be a Calabi-Yau variety over $K$. A distinguished gauge
form on $X$ is a gauge form $\omega$ such that $\ord(X,\omega)=0$.
\end{definition}
Thus, a distinguished gauge form on $X$ extends to a relative
differential form on every weak N\'eron model, in a ``minimal''
way. It is clear that
 $X$ admits a distinguished gauge form iff $X$ has a $K$-rational
point, and that a distinguished gauge form is unique up to
multiplication with a unit in $R$.


\begin{definition}\label{def-cyzeta}
Let $X$ be a Calabi-Yau variety over $K$, and assume that $X$ has
a $K$-rational point. Let $\omega$ be a distinguished gauge form
on $X$. We define the motivic zeta function $Z_X(T)$ of $X$ by
$$Z_X(T)=\LL^{\mathrm{dim}(X)} \sum_{d\in \N}\left(\int_{X(d)}|\omega(d)|\right)
T^d\quad \in \mathcal{M}_k[[T]]. $$
\end{definition}
This definition only depends on $X$, and not on the choice of
distinguished gauge form $\omega$, since multiplying $\omega$ with
a unit in $R$ does not affect the motivic integral of $\omega$ on
$X$. It follows from Proposition \ref{prop-mothaar} that, when $X$
is an abelian variety, Definition \ref{def-cyzeta} is equivalent
to  Definition \ref{def-abzeta}.


By embedded resolution of singularities, we can find an
$sncd$-model $\mathcal{X}$ for $X$, i.e., a regular proper
$R$-model such that $\mathcal{X}_s=\sum_{i\in I}N_i E_i$ is a
divisor with strict normal crossings. For every $i\in I$, we
define the order $\mu_i=\ord_{E_i}\omega$ of $\omega$ along $E_i$
as in \cite[6.8]{NiSe-Inv}. These values do not depend on the
choice of distinguished gauge form $\omega$. For every non-empty
subset $J$ of $I$,
we set \begin{eqnarray*}E_J&=&\cap_{j\in J}E_j,\\
E_J^o&=&E_J\setminus (\cup_{i\in I\setminus J}E_i).\end{eqnarray*}
These are locally closed subsets of $\mathcal{X}_s$, and we endow
them with the induced reduced structure. As $J$ runs through the
set of non-empty subsets of $I$, the subvarieties $E_J^o$ form a
partition of $\mathcal{X}_s$.

It follows from \cite[7.7]{NiSe-Inv} that the motivic zeta
function $Z_X(T)$ can be expressed in the form
\begin{equation}\label{eq-ratpres}
Z_X(T)=\sum_{\emptyset\neq J\subset
I}(\LL-1)^{|J|-1}[\widetilde{E}_J^o]\prod_{j \in
J}\frac{\LL^{-\mu_j}T^{N_j}}{1-\LL^{-\mu_j}T^{N_j}}\quad \in
\mathcal{M}_k[[T]]
\end{equation}
where $\widetilde{E}_J^o$ is a certain finite \'etale cover of
$E_J$. By \cite[2.2.2]{Ni-saito}, one can construct
$\widetilde{E}_J^o$ as follows: set
$$N_J=\mathrm{gcd}\{N_j\,|\,j\in J\},$$
 choose a uniformizer $\pi$ in $R$, and denote by $\mathcal{Y}$
 the normalization of
$$\mathcal{X}\times_R (R[x]/(x^{N_J}-\pi)).$$ Then there is an
isomorphism of $E_J^o$-schemes
$$\widetilde{E}_J^o\cong E_J^o\times_{\mathcal{X}}\mathcal{Y}.$$

In particular, one sees from \eqref{eq-ratpres} that $Z_X(T)$ is a
rational function and that every pole of $Z_X(\LL^{-s})$ is of the
form $ s = - \mu_i/N_i $ for some $ i \in I $. Every irreducible
component $E_i$ of the special fiber yields in this way a
``candidate pole'' $-\mu_i/N_i$ of the zeta function. Since the
expression in \eqref{eq-ratpres} is independent of the chosen
normal crossings model $\mathcal{X}$, one expects in general that
not all of these candidate poles are actual poles of $Z_X(T)$.
 But even candidate poles that appear in {\em every} model
 will not always be actual poles. To explain this phenomenon, we
 will propose in Section \ref{subsec-GMP} a version of the
 Monodromy Conjecture for Calabi-Yau varieties.

\begin{example} If $X$ is an elliptic curve, then $X$ admits a unique {\em minimal} regular model with strict
normal crossings $\mathcal{X}$. It is not the case that all
irreducible components of $\mathcal{X}_s$ give actual poles of the
motivic zeta function. This can be seen by combining Theorem
\ref{thm-main} with the Kodaira-N\'eron classification.
\end{example}

\subsection{Log canonical threshold}\label{subsec-lct}
 Let $X$ be a
Calabi-Yau $K$-variety such that $X(K) \neq \emptyset$. From the
formula in \eqref{eq-ratpres}, we see that the poles of
$Z_X(\mathbb{L}^{-s})$ form a finite subset of $\mathbb{Q}$. It
turns out that the \emph{largest} pole of $Z_X(T)$ is an
interesting invariant for $X$, which can be read off from the
numerical data associated to any $sncd$-model of $X$.

 Choose a regular proper
$R$-model $\mathcal{X}$ of $X$ such that $\mathcal{X}_s$ is a
strict normal crossings divisor $\mathcal{X}_s=\sum_{i\in I}N_i
E_i$ and define the values $\mu_i,\,i\in I$ as in Section
\ref{subsec-motCY}. We put
\begin{eqnarray*}
lct(X)&=&\min\{\mu_i/N_i\,|\,i\in I\},
\\ \delta(X)&=&\max \{\,|J|\ |\,\emptyset \neq J\subset I,\
E_J\neq \emptyset,\ \mu_j/N_j=lct(X)\mbox{ for all }j\in J\} - 1.
\end{eqnarray*}

\begin{definition}\label{def-lct}
We call $lct(X)$ the log canonical threshold of $X$, and
$\delta(X)$ the degeneracy index of $X$.
\end{definition}
The following theorem shows that these values do not depend on the
chosen model $\mathcal{X}$.

\begin{theorem}\label{thm-lct}
Let $X$ be a Calabi-Yau variety with $X(K) \neq \emptyset$.
\begin{enumerate}

\item The value $s = -lct(X)$ is the largest pole of the motivic
zeta function $Z_X(\LL^{-s})$, and its order equals $\delta(X) +
1$. In particular, $lct(X)$ and $\delta(X)$ are independent of the
model $\mathcal{X}$. For every integer $d>0$, we have
\begin{eqnarray*}\delta(X\times_K K(d))&=&\delta(X),
\\ lct(X\times_K K(d))&=&d\cdot lct(X).
\end{eqnarray*}

\item Assume moreover that $K=\C((t))$ and that $X$ admits a
projective model $\mathcal{Y}$ over the ring $\C\{t\}$ of germs of
analytic functions at the origin of the complex plane. If we put
$\alpha=lct(X)$, then $\exp(-2\pi i\alpha)$ is an eigenvalue of
the action of the semi-simple part of monodromy on
$$\mathrm{Gr}_F^m H^{m}(\mathcal{Y}_\infty,\C):=\mathrm{Gr}_F^m \mathbb{H}^m(\mathcal{Y}_s^{\an},R\psi_{\mathcal{Y}}(\C))$$
where $ m = \mathrm{dim}(X)$. In particular, for every embedding
of $\Q_\ell$ in $\C$, $\exp(-2\pi i\alpha)$ is an eigenvalue of
every tame monodromy operator $\sigma$ on $H^m(X\times_K
K^t,\Q_\ell)$.
\end{enumerate}
\end{theorem}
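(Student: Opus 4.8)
The plan is to prove the two parts separately, with part (1) being a formal consequence of the explicit formula \eqref{eq-ratpres} together with the base change behaviour of $sncd$-models, and part (2) being the harder, Hodge-theoretic statement that requires identifying the relevant cohomology class. For part (1), I would start from \eqref{eq-ratpres} and analyze which candidate poles $-\mu_i/N_i$ survive as actual poles. The key observation is that the largest candidate pole $s=-lct(X)=-\min_i\{\mu_i/N_i\}$ \emph{cannot} be cancelled: in the sum over $J$, group the terms by whether $J$ is contained in the set $I_0=\{i\in I : \mu_i/N_i=lct(X)\}$. Terms with $J\not\subset I_0$ contribute rational functions whose poles along $\LL^{-s}=\LL^{lct(X)}$-type factors have strictly smaller order (or none at all), while the terms with $J\subset I_0$, $E_J\neq\emptyset$ contribute with a leading coefficient (as $s\to -lct(X)$) proportional to $\sum_{J\subset I_0,\,E_J\neq\emptyset}(\LL-1)^{|J|-1}[\widetilde E_J^o]$, and the highest-order pole has order $\delta(X)+1$. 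To see that this leading term is genuinely nonzero in $\mathcal{M}_k$, I would take a point count or, more robustly, pass to the Poincaré polynomial / Euler characteristic realization: the top-dimensional strata $E_J^o$ with $|J|=\delta(X)+1$ have total class with nonzero leading behaviour because the $E_J$ are nonempty proper varieties. This shows $-lct(X)$ is a pole of order exactly $\delta(X)+1$, and since it does not depend on $\mathcal{X}$ (being the largest pole of a model-independent series), neither do $lct(X)$ and $\delta(X)$. The base change formulas follow by noting that if $\mathcal{X}$ is an $sncd$-model for $X$, one obtains an $sncd$-model for $X\times_K K(d)$ after base change along $R\to R[x]/(x^d-\pi)$ and resolving; the multiplicities $N_i$ get multiplied by $d$ while the $\mu_i$ are governed by the relative canonical divisor and, for a \emph{Calabi-Yau} variety with distinguished gauge form, one checks that $\mu_i$ transforms so that $\mu_i/N_i$ is divided by $d$ — more precisely, one verifies $\mu_i(d)/N_i(d)=(1/d)\cdot\mu_i/N_i$ and that the combinatorics of the $E_J$ (hence $\delta$) is unchanged.

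For part (2), the strategy is to reduce the eigenvalue statement to a known result relating the log canonical threshold to the monodromy action via the motivic nearby fiber, in the geometric setting $K=\C((t))$ with an algebraic/analytic model $\mathcal{Y}/\C\{t\}$. The point is that the coefficients of $Z_X(T)$, after applying a suitable cohomological realization, compute (a generating series for) the Lefschetz numbers of powers of monodromy on the nearby cohomology $\mathbb{H}^\bullet(\mathcal{Y}_s^{\an},R\psi_{\mathcal{Y}}\C)$; the formula \eqref{eq-ratpres} then identifies, via the $sncd$-model, the pole at $s=-lct(X)$ with a specific piece of the limit mixed Hodge structure. Concretely, I would invoke the relation between \eqref{eq-ratpres} and A'Campo-type / Steenbrink-type formulas for the spectrum: the strata $E_J^o$ with $\mu_j/N_j=lct(X)$ for all $j\in J$ contribute, in the Hodge realization, exactly to $\mathrm{Gr}_F^m H^m(\mathcal{Y}_\infty,\C)$ with monodromy eigenvalue $\exp(-2\pi i\, lct(X))$ — this is the content of Saito's and Steenbrink's work on the motivic/Hodge spectrum, and it is encoded in \cite{NiSe-Inv} and \cite{Ni-saito}. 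The nonvanishing from part (1) — that the leading coefficient of the top-order pole is nonzero — then forces the corresponding graded piece of the limit MHS to be nonzero, giving the claimed eigenvalue. Finally, the comparison between the Hodge-theoretic monodromy on $\mathrm{Gr}_F^m H^m(\mathcal{Y}_\infty,\C)$ and the tame $\ell$-adic monodromy $\sigma$ on $H^m(X\times_K K^t,\Q_\ell)$ follows from the standard comparison isomorphisms (Deligne, \cite[Exp.XIV]{sga7b}) between analytic and $\ell$-adic nearby cycles, which match semisimple parts of monodromy and hence eigenvalues, together with the fact that an eigenvalue appearing on a graded piece of the associated weight/Hodge filtration appears on the whole cohomology.

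The main obstacle I anticipate is part (2): establishing the precise dictionary between the motivic/combinatorial data $(\mu_i,N_i,E_J^o)$ appearing in \eqref{eq-ratpres} and the Hodge filtration on the limit cohomology $\mathrm{Gr}_F^m H^m(\mathcal{Y}_\infty,\C)$. One has to be careful that it is the piece in cohomological degree exactly $m=\dim X$ and Hodge level exactly $m$ (i.e., the ``most degenerate'' Hodge component) that picks up the eigenvalue $\exp(-2\pi i\,lct(X))$ — this is where the Calabi-Yau hypothesis (trivial canonical sheaf, so the relative canonical divisor is supported on the special fibre with prescribed coefficients tied to the $N_i$) enters in an essential way, ensuring that $lct(X)$ is computed by the \emph{same} data that controls the Hodge-theoretic weight of the contribution. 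The rest — the combinatorial non-cancellation in \eqref{eq-ratpres} and the $\ell$-adic comparison — is routine given the cited results.
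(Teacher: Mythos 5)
Since the paper is a survey, it does not prove Theorem~\ref{thm-lct} here but defers to \cite{HaNi-zetav3}, so your argument must be assessed on its own terms. The most serious gap is in the base change step of part~(1): the transformation rules you propose are inconsistent with the very statement you are proving. You assert that $N_i$ becomes $dN_i$ and that $\mu_i(d)/N_i(d)=(1/d)\cdot\mu_i/N_i$, which gives $lct(X\times_K K(d))=lct(X)/d$, whereas the theorem asserts $lct(X\times_K K(d))=d\cdot lct(X)$; these disagree by a factor of $d^2$. Moreover the multiplicity $dN_i$ pertains to the naive pullback $\mathcal{X}\times_R R(d)$, which is not regular; after normalizing, the component over $E_i$ carries multiplicity $N_i/\gcd(N_i,d)$ and one still has to resolve to reach an $sncd$-model. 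Most importantly, a distinguished gauge form on $X\times_K K(d)$ is \emph{not} the pullback $\omega(d)$ of a distinguished form on $X$: by Proposition~\ref{prop-mu} and Definition~\ref{def-disform} you must renormalize by a power of a uniformizer of $R(d)$ to force $\ord(X(d),\cdot)=0$, and this $d$-dependent renormalization is exactly what drives the claimed scaling of $lct$. You do not track it, and without it the base change formulas do not follow.

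The non-cancellation argument at $s=-lct(X)$ also needs more care than you give it. The terms of \eqref{eq-ratpres} contributing to the order-$(\delta(X)+1)$ part of the pole are not only those with $J\subset I_0$: every $J$ with $|J\cap I_0|=\delta(X)+1$ and $E_J^o\neq\emptyset$ contributes, with the factors indexed by $j\in J\setminus I_0$ specializing at the pole to elements of a localization of $\mathcal{M}_k$ whose sign is not evident, so the leading coefficient is not obviously a sum of ``positive'' classes. One has to reorganize the sum (for instance by expressing the open strata through the closed ones) or pass to a suitable realization. On that point, the Euler characteristic realization you offer as a back-up is useless here, since $\chi(\LL-1)=\chi(\mathbb{G}_{m,k})=0$ annihilates the factor $(\LL-1)^{\delta(X)}$ whenever $\delta(X)>0$; only a realization, such as the Poincar\'e or Hodge--Deligne polynomial, that stays injective on classes of the shape $(\LL-1)^a[Y]$ with $Y$ nonempty does the job. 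Finally, part~(2) is, as you yourself note, only a sketch: the reduction to the limit mixed Hodge structure on an $sncd$-model is the right idea, but the precise identification of the leading pole with a nonzero graded piece $\mathrm{Gr}_F^m H^m(\mathcal{Y}_\infty,\C)$ is exactly the hard content, and it is not carried out.
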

In part $(2)$ of Theorem \ref{thm-lct} above,
$H^{m}(\mathcal{Y}_\infty,\C)$ denotes the limit cohomology at
$t=0$ associated to any projective model for $\mathcal{Y}$ over a
small open disc around the origin of $\C$. It carries a natural
mixed Hodge structure \cite{steenbrink,navarro}, and $F^{\bullet}$
denotes the Hodge filtration. Note that $K^t=K^s$ since $k$ has
characteristic zero.

Comparing Theorem \ref{thm-main} and Theorem \ref{thm-lct}, we
find:
\begin{cor}\label{cor-lctab}
If $A$ is an abelian $K$-variety, then $lct(A)=-c(A)$ and
$\delta(A)=t_{\mathrm{pot}}(A)$.
\end{cor}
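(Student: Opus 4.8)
The plan is to deduce the corollary immediately by confronting Theorem \ref{thm-main} with Theorem \ref{thm-lct}, once one checks that both are applicable to an abelian $K$-variety in the present setting. First I would record the relevant preliminaries. We are working under the standing hypothesis of Section \ref{sec-degcy} that $k$ has characteristic zero, so the characteristic exponent $p$ equals $1$ and $\N'$ is the set of all positive integers. In particular every finite extension of $K$ is tame, so the minimal extension over which $A$ acquires semi-abelian reduction is tame, and therefore $A$ is tamely ramified; thus Theorem \ref{thm-main} applies to $A$. Moreover $A$ is a Calabi-Yau variety (as noted after Definition \ref{def-cy}) and it has a $K$-rational point, namely its identity section, so $A(K)\neq\emptyset$ and Theorem \ref{thm-lct} applies as well. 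Finally I would invoke the compatibility, recorded right after Definition \ref{def-cyzeta} and resting on Proposition \ref{prop-mothaar}, that the Calabi-Yau zeta function $Z_A(T)$ of Definition \ref{def-cyzeta} coincides with the abelian-variety zeta function of Definition \ref{def-abzeta}; hence there is a single well-defined series $Z_A(\LL^{-s})$ whose poles are described by both theorems.

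With these points in place the argument is one line. By Theorem \ref{thm-main}(1) the series $Z_A(\LL^{-s})$ has a \emph{unique} pole, at $s=c(A)$, of order $t_{\mathrm{pot}}(A)+1$. By Theorem \ref{thm-lct}(1) the \emph{largest} pole of $Z_A(\LL^{-s})$ is $s=-lct(A)$, of order $\delta(A)+1$. Since there is only one pole, the largest pole is the unique pole; comparing their locations yields $-lct(A)=c(A)$, that is, $lct(A)=-c(A)$, and comparing the orders yields $\delta(A)+1=t_{\mathrm{pot}}(A)+1$, that is, $\delta(A)=t_{\mathrm{pot}}(A)$.

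There is essentially no obstacle to surmount: the whole content of the corollary is already packaged into the two theorems being combined, and the reconciliation of the two invariants is purely formal. The only items that genuinely require a word of justification are that $A$ is tamely ramified (automatic because $\mathrm{char}\,k=0$) and that the two \emph{a priori} distinct definitions of $Z_A(T)$ agree (already proved in the text). I do not expect any hidden subtlety here; the statement is a clean corollary of Theorems \ref{thm-main} and \ref{thm-lct}.
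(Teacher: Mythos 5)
Your proof is correct and follows exactly the approach the paper takes: the corollary is stated as an immediate consequence of comparing Theorem \ref{thm-main} with Theorem \ref{thm-lct}, and your verification that $A$ is tamely ramified in characteristic zero, has a $K$-rational point, and that the two definitions of $Z_A(T)$ agree is precisely the (implicit) bookkeeping needed.
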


The degeneracy index of a Calabi-Yau variety $X$ over $K$ is a
measure for the potential degree of degeneration of $X$ over the
closed point of $\Spec R$. If $A$ is an abelian variety, then by
Corollary \ref{cor-lctab}, the degeneracy index $\delta(A)$ is
zero if and only if $A$ has potential good reduction, and
$\delta(A)$ reaches its maximal value $\mathrm{dim}(A)$ if and
only if $A$ has potential purely multiplicative reduction.

Looking at the expression for the zeta function of an abelian
variety in Proposition \ref{prop-refine}, one sees that the zeta
function of an abelian variety encodes many other interesting
invariants of the abelian variety, such as the order function
$\ord_A$ and the number of components $\phi_A(d)$ for every $d$ in
$\N$. Our motivic zeta function allows to generalize these
invariants to Calabi-Yau varieties. Using the expression
\eqref{eq-ratpres}
 for the zeta function in terms of an $sncd$-model, all these invariants
can be explicitly computed on such a model. See
\cite[\S5]{HaNi-zetav3}.
\subsection{Comparison with the case of a hypersurface
singularity}\label{subsec-lctcomparison} Let us return for a moment to the set-up of Section
\ref{subsec-milnor}, still assuming that $k$ has characteristic
zero. We can also apply Definition \ref{def-lct} to this
situation, replacing $X$ by the analytic Milnor fiber
$\mathscr{F}_x$ of $f$ at $x$ and taking for $\omega$ the gauge
form $t \cdot \phi/df$ on $\mathscr{F}_x$, where $\phi/df$ is a
Gelfand-Leray form. In this way, we define the log-canonical
threshold $lct_x(f)$ of $f$ at $x$ and the degeneracy index
$\delta_x(f)$ of $f$ at $x$. One can deduce from
\cite[7.30]{Ni-trace} that $lct_x(f)$ coincides with the usual
log-canonical threshold of $f$ at $x$ as it is defined in
birational geometry. The results in Theorem \ref{thm-lct} remain
valid; in particular, using Theorem \ref{thm-compar}, we see that
$s=-lct_x(f)$ is the largest pole of the motivic zeta function
$Z_{f,x}(\LL^{-s})$ of $f$ at $x$. We refer to \cite{HaNi-zetav3}
 for details.

\subsection{Global Monodromy Property}\label{subsec-GMP}
In the light of our results for abelian varieties, it is natural
to wonder if there is a relation between poles of $Z_X(T)$ and
monodromy eigenvalues for Calabi-Yau varieties $X$, similar to the
one predicted by the motivic monodromy conjecture for hypersurface
singularities (Conjecture \ref{conj-motmon2}).

\begin{definition}\label{def-GMP}
Let $X$ be a Calabi-Yau variety with $X(K) \neq \emptyset$, and
let $\sigma$ be a topological generator of $G(K^t/K)=G(K^s/K)$. We
say that $X$ satisfies the Global Monodromy Property (GMP) if
there exists a finite subset $\mathcal{S} $ of $ \mathbb{Z} \times
\mathbb{Z}_{>0} $ such that
$$ Z_X(T) \in \mathcal{M}_k \left[ T, \frac{1}{1 - \mathbb{L}^a T^b}\right]_{(a,b) \in \mathcal{S}} $$
and such that for each $(a,b) \in \mathcal{S}$, the cyclotomic
polynomial $\Phi_{\tau(a/b)}(t)$ divides the characteristic
polynomial of the monodromy operator $\sigma$ on $H^i(X \times_K
K^t, \mathbb{Q}_{\ell}) $ for some $i \in \mathbb{N}$.
\end{definition}
 Recall that $\tau(a/b)$ denotes the order of $a/b$ in $\Q/\Z$.
By Theorem \ref{thm-main}, every abelian $K$-variety satisfies the
Global Monodromy Property.

\begin{question}\label{ques-GMP}
Is there a natural condition on $X$ that guarantees that $X$
satisfies the Global Monodromy Property (GMP)?
\end{question}

We don't know any example of a Calabi-Yau variety over $K$ that
does not satisfy the GMP. We would like to mention some work in
progress where we can show that the GMP holds for certain types of
varieties ``beyond'' abelian varieties.

\subsubsection*{Semi-abelian varieties}
As a direct generalization of abelian varieties, it is natural to
consider \emph{semi-abelian} varieties, i.e., algebraic $K$-groups
that are extensions of abelian varieties by tori. N\'eron models
exist also for semi-abelian varieties, we refer to \cite{neron}
and \cite{HaNi-zeta} for more details (the N\'eron model we
consider is the maximal quasi-compact open subgroup scheme of the
N\'eron $lft$-model from \cite{neron}). We have generalized
Theorem \ref{thm-main} to tamely ramified semi-abelian
$K$-varieties, in arbitrary characteristic. The main complication
is that one has to control the behaviour of the {\em torsion} part
of the component group of the N\'eron $lft$-model under ramified
base change.

\subsubsection*{$K3$-surfaces.}
Let $X$ be a Calabi-Yau variety over $K$ that admits a
$K$-rational point. To show that $X$ satisfies the Global
Monodromy Property, one strategy would be to consider a regular
proper model $\mathcal{X}$ whose special fiber has strict normal
crossings. In principle, using the expression in
\eqref{eq-ratpres}, one can then determine the poles of $Z_X(T)$.
The next step is to use A'Campo's formula (in the form of
\cite{Ni-saito}) to compute the monodromy zeta function of $X$ on
the model $\mathcal{X}$ (the monodromy zeta function is the
alternate product of the characteristic polynomials of the
monodromy action on the cohomology spaces of $X$).
 In this way, one tries to show that the poles of $Z_X(T)$ correspond
to monodromy eigenvalues. In practice, this kind of argumentation
can be quite complicated. For one thing, when the dimension of $X$
is greater than one, there is usually no distinguished
$sncd$-model to work with, like the minimal $sncd$-model in the
case of elliptic curves. And even when one has some more or less
explicitly given
 model, the combinatorial and geometric complexity of
the special fiber often make computations very hard: one needs to
analyze the model in a very precise way to eliminate fake
candidate poles and to find a sufficiently large list of monodromy
eigenvalues. Worse, the monodromy zeta function might contain too
little information to find all the necessary monodromy
eigenvalues, due to cancellations in the alternate product.

There do however exist cases where this procedure leads to
results. For instance, assume that $X$ has dimension two and that
it allows a triple-point-free degeneration. By this we mean that
$X$ has a proper regular model $\mathcal{X}/R$ where the special
fiber $\mathcal{X}_s$ is a strict normal crossings divisor  such
that three distinct irreducible components of $\mathcal{X}_s$
never meet in one point. Such degenerate triple-point-free fibers
have been classified by B. Crauder and D. Morrison \cite{CrMo}. In
an ongoing project we use their classification to study the
motivic zeta function of $X$, and we have been able to verify in
almost all cases that the Global Monodromy Property holds.


\begin{thebibliography}{10}
\bibitem[EGA4a]{ega4.1}
A.~Grothendieck and J.~Dieudonn\'e.
\newblock El\'ements de {G}\'eom\'etrie {A}lg\'ebrique, IV, Premi\`ere partie.
\newblock {\em Publ. Math., Inst. Hautes \'Etud. Sci.}, 20:5--259, 1964.

\bibitem[SGA7a]{sga7a}
{\em Groupes de monodromie en g\'eom\'etrie alg\'ebrique. {I}}.
\newblock S\'eminaire de G\'eom\'etrie Alg\'ebrique du Bois-Marie 1967--1969
  (SGA 7 {I}), Dirig\'e par A.~Grothendieck. Avec la collaboration de M.~Raynaud et D.S.~Rim. Volume 288 of {\em Lecture Notes in
  Mathematics.}
 Springer-Verlag, Berlin, 1972.

\bibitem[SGA7b]{sga7b}
{\em Groupes de monodromie en g\'eom\'etrie alg\'ebrique. {II}}.
\newblock S\'eminaire de G\'eom\'etrie Alg\'ebrique du Bois-Marie 1967--1969
  (SGA 7 II), Dirig\'e par P.~Deligne et N.~Katz. Volume 340 of {\em Lecture Notes in Mathematics.} Springer-Verlag, Berlin, 1973.

\bibitem[Berk93]{berk-etale}
V.G.~Berkovich.
\newblock {\'Etale cohomology for non-Archimedean analytic spaces.}
\newblock {\em Publ. Math., Inst. Hautes \'Etud. Sci.}, 78:5--171, 1993.

\bibitem[Berk96]{berk-vanish2}
V.G.~Berkovich.
\newblock {Vanishing cycles for formal schemes, {II}}.
\newblock {\em Invent. Math.}, 125(2):367--390, 1996.

\bibitem[Bert96]{bert}
P.~Berthelot.
\newblock {Cohomologie rigide et cohomologie rigide \`{a} supports propres}.
\newblock {\em Prepublication, Inst. Math. de Rennes}, 1996.


\bibitem[Bj79]{bjork}
J.-E.~Bj\"{o}rk. {\em Rings of differential operators.} Volume 21
of {\em North-Holland Mathematical Library.} North-Holland
Publishing Co., Amsterdam-New York, 1979.

\bibitem[BS66]{borevich-shafarevich}
A.I.~Borevich and I.R.~Shafarevich. {\em Number theory.} Volume 20
of {\em Pure and Applied Mathematics.} Academic Press, New
York-London, 1966.

\bibitem[BL93]{formrigI}
S.~Bosch and W.~L{\"u}tkebohmert.
\newblock Formal and rigid geometry. {I}. {R}igid spaces.
\newblock {\em Math. Ann.}, 295(2):291--317, 1993.

\bibitem[BLR90]{neron}
S.~Bosch, W.~{L\"u}tkebohmert and M.~Raynaud.
\newblock {\em {N\'eron models}}, volume~21 of
 {\em Ergebnisse der Mathematik und ihrer Grenzgebiete}. \newblock Springer-Verlag, 1990.


\bibitem[BS95]{bosch-neron}
S.~Bosch and K.~Schl{\"o}ter.
\newblock N\'eron models in the setting of formal and rigid geometry.
\newblock {\em Math. Ann.}, 301(2):339--362, 1995.

\bibitem[BX96]{BoXa}
S.~Bosch and X.~Xarles.
\newblock {Component groups of N{\'e}ron models via rigid uniformization}.
\newblock {\em Math. Ann.}, 306:459--486, 1996.

\bibitem[Ch00]{chai}
C.-L.~Chai.
\newblock {N\'eron models for semiabelian varieties: congruence and change of
  base field}.
\newblock {\em Asian J. Math.}, 4(4):715--736, 2000.

\bibitem[CM83]{CrMo}
B.~Crauder and D.R.~Morrison. {\em Triple-point-free
degenerations of surfaces with Kodaira number zero.} in: R.
Friedman and D.R. Morrison (eds). {\em The birational geometry of
degenerations}  Volume 29 of  {\em Progress in Mathematics.}
Birkh\"auser Boston, Boston, MA, pages 353-–386, 1983.

\bibitem[dJ95]{dJ}
A.J.~de~Jong.
\newblock {Crystalline {D}ieudonn\'e module theory via formal and rigid
  geometry.}
\newblock {\em Publ. Math., Inst. Hautes \'Etud. Sci.}, 82:5--96, 1995.

\bibitem[De91a]{denef-form}
J.~Denef. \newblock Local zeta functions and Euler
characteristics. \newblock {\em Duke Math. J.} 63(3):713-–721,
1991.

\bibitem[De91b]{Denef-bourbaki}
J.~Denef.
\newblock Report on {I}gusa's local zeta function.
\newblock In {\em S\'eminaire Bourbaki, Vol. 1990/91, Exp. No.730-744}, volume
  201-203, pages 359--386, 1991.



\bibitem[DL98]{motigusa}
J.~Denef and F.~Loeser.
\newblock Motivic {I}gusa zeta functions.
\newblock {\em J. Algebraic Geom.}, 7:505--537, 1998.

\bibitem[DL01]{DL-geom}
J.~Denef and F.~Loeser.
\newblock Geometry on arc spaces of algebraic varieties.
\newblock {\em Progr. Math.}, 201:327--348, 2001.

\bibitem[DL02]{DL-Lefschetz}
J.~Denef and F.~Loeser.
\newblock {Lefschetz numbers of iterates of the monodromy and truncated arcs.}
\newblock {\em Topology}, 41(5):1031--1040, 2002.

\bibitem[Di04]{dimca}
A.~Dimca. \newblock{\em Sheaves in topology.}
 Universitext.
Springer-Verlag, Berlin, 2004.


\bibitem[Ed92]{Edix}
B.~Edixhoven.
\newblock N\'eron models and tame ramification.
\newblock {\em Compos. Math.}, 81:291--306, 1992.


\bibitem[HN10a]{HaNi-comp}
L.H.~Halle and J.~Nicaise.
\newblock{The N\'eron component series of an abelian variety.}
\newblock{\em Math. Ann.} 348(3):749--778, 2010.

\bibitem[HN10b]{HaNi-zetav3}
L.H.~Halle and J.~Nicaise.
\newblock{Motivic zeta functions of abelian varieties, and the
monodromy conjecture.}
\newblock{\em preprint}, arXiv:0902.3755v3.

\bibitem[HN10c]{HaNi-zeta}
L.H.~Halle and J.~Nicaise.
\newblock{Motivic zeta functions of abelian varieties, and the
monodromy conjecture.}
\newblock{\em submitted}, arXiv:0902.3755v4.

\bibitem[HN10d]{HaNi-jumpmon}
L.H.~Halle and J.~Nicaise.
\newblock{Jumps and monodromy of abelian varieties.}
\newblock{\em submitted}, arXiv:1009.3777.

\bibitem[Ig74]{Igusa74}
J.~Igusa.\newblock{ Complex powers and asymptotic expansions. I.}
\newblock{\em J. Reine Angew. Math.} 268/269:110-–130, 1974.

\bibitem[Ig75]{Igusa75}
J.~Igusa.\newblock{ Complex powers and asymptotic expansions. II.}
\newblock{\em J. Reine Angew. Math.} 278/279: 307-–321, 1975.



\bibitem[Ig00]{Igusa00}
J.~Igusa.
\newblock {\em An introduction to the theory of local zeta functions}.
\newblock AMS/IP Studies in {A}dvanced {M}athematics. Providence, RI; International Press, Cambridge, MA, 2000.

\bibitem[Ka76]{kashiwara}
M.~Kashiwara.
\newblock {B-functions and holonomic systems. Rationality of roots of
  B-functions.}
\newblock {\em Invent. Math.}, 38:33--53, 1976.

\bibitem[LO85]{lenstra-oort}
H.W.~Lenstra and F.~Oort.
\newblock {Abelian varieties having purely additive reduction.}
\newblock {\em J. Pure Appl. Algebra}, 36:281--298, 1985.

\bibitem[LS03]{motrigid}
F.~Loeser and J.~Sebag.
\newblock {Motivic integration on smooth rigid varieties and invariants of
  degenerations}.
\newblock {\em Duke Math. J.}, 119:315--344, 2003.

\bibitem[Ma83]{malgrange-ast}
B.~Malgrange.
\newblock {Polyn\^{o}mes de Bernstein-Sato et cohomologie \'evanescente}.
\newblock {\em Ast\'erisque}, 101/102:243--267, 1983.

\bibitem[MN91]{mebkhout}
Z.~Mebkhout and L.~Narv{\'{a}}ez-Macarro. \newblock {La th\'eorie
du polyn{\^o}me de Bernstein-Sato pour les alg\`ebres de Tate et
de Dwork-Monsky-Washnitzer.} {\em Ann. Sci. {\'E}cole Norm. Sup.
(4)} 24(2): 227-–256, 1991.

\bibitem[Na87]{navarro}
V.~Navarro~Aznar.
\newblock {Sur la th\'eorie de Hodge-Deligne.}
\newblock {\em Invent. Math.}, 90:11--76, 1987.


\bibitem[Ne64]{neron0}
A.~N\'eron.
\newblock {Mod\`eles minimaux des vari\'et\'es ab\'eliennes sur les corps locaux et
globaux. } \newblock {\em Inst. Hautes \'Etudes Sci. Publ.Math.}
 21:5--128, 1964.



\bibitem[Ni09a]{Ni-trace}
J.~Nicaise.
\newblock A trace formula for rigid varieties, and motivic Weil generating
  series for formal schemes.
\newblock {\em Math. Ann.}, 343(2):285--349, 2009.

\bibitem[Ni09b]{Ni-abelian}
J.~Nicaise.
\newblock {Trace formula for component groups of N\'eron models}.
\newblock {\em preprint}, arXiv:0901.1809v2.

\bibitem[Ni10a]{Ni-japan}
J.~Nicaise.
\newblock An introduction to $p$-adic and motivic zeta functions and the monodromy conjecture.
\newblock in: G. Bhowmik, K. Matsumoto and H. Tsumura (eds.), {\em Algebraic and analytic aspects of zeta functions and L-functions.}
 Volume 21 of {\em MSJ Memoirs}, Mathematical Society of Japan, pages 115--140,
 2010.



\bibitem[Ni10b]{Ni-tori}
J.~Nicaise.
\newblock Motivic invariants of algebraic tori.
\newblock To appear in \emph{Proc. Amer. Math. Soc.}

\bibitem[Ni10c]{Ni-sing}
J.~Nicaise.
\newblock Singular cohomology of the analytic {M}ilnor fiber, and mixed {H}odge
  structure on the nearby cohomology.
\newblock  To appear in {\em J. Algebraic Geom.}, arXiv:0710.0330.

\bibitem[Ni10d]{Ni-tracevar}
J.~Nicaise.
\newblock {A trace formula for varieties over a discretely valued field}.
\newblock  To appear in {\em J. Reine Angew. Math.}, arxiv:0805.1323.

\bibitem[Ni10e]{Ni-saito}
J.~Nicaise.
\newblock {Geometric criteria for tame ramification}.
\newblock {\em submitted}, arXiv:0910.3812.

\bibitem[NS07]{NiSe-Inv}
J.~Nicaise and J.~Sebag.
\newblock The motivic {S}erre invariant, ramification, and the analytic
  {M}ilnor fiber.
\newblock {\em Invent. Math.}, 168(1):133--173, 2007.


\bibitem[NS08]{NiSe-weilres}
J.~Nicaise and J.~Sebag.
\newblock Motivic {S}erre invariants and {W}eil restriction.
\newblock {\em J. Algebra}, 319(4):1585--1610, 2008.

\bibitem[NS10a]{NiSe-K0}
J.~Nicaise and J.~Sebag.
\newblock{The Grothendieck ring of varieties.} To appear in: R. Cluckers,
J.~Nicaise and J.~Sebag (eds). {\em Motivic integration and its
interactions with model theory and non-archimedean geometry.}
London Mathematical Society Lecture Notes Series, Cambridge
University Press.


\bibitem[NS10b]{NiSe-motint}
J.~Nicaise and J.~Sebag.
\newblock{Motivic invariants of rigid varieties, and applications to complex
singularities.} To appear in: R. Cluckers, J.~Nicaise and J.~Sebag
(editors). {\em Motivic integration and its interactions with
model theory and non-archimedean geometry.} London Mathematical
Society Lecture Notes Series, Cambridge University Press.


\bibitem[Ra74]{raynaud}
M.~Raynaud.
\newblock {G\'eom\'etrie analytique rigide d'apr\`es Tate, Kiehl, ... }.
\newblock {\em M\'emoires de la S.M.F.}, 39-40:319--327, 1974.

\bibitem[Sa94]{saito-SMF}
M.~Saito. \newblock On microlocal $b$-function. \newblock {\em
Bull. Soc. Math. France} 122(2):163-–184, 1994.



\bibitem[RV03]{RoVe}
B.~Rodrigues and W.~Veys.
\newblock Poles of {Z}eta functions on normal surfaces.
\newblock {\em Proc. London Math. Soc.}, 87(3):164--196, 2003.

\bibitem[St76]{steenbrink}
J.H.M. Steenbrink.
\newblock {Limits of Hodge structures.}
\newblock {\em Invent. Math.}, 31:229--257, 1976.

\end{thebibliography}
\end{document}